\newcommand{\RR}{\mathbb{R}}
\newcommand{\NN}{\mathbb{N}}
\newcommand{\ZZ}{\mathbb{Z}}
\newcommand{\QQ}{\mathbb{Q}}
\newcommand{\OO}{\mathcal{O}}
\newcommand{\HH}{\mathbb{H}}
\newcommand{\nrm}{\text{nrm}}
\newcommand{\tr}{\text{tr}}
\newcommand{\disc}{\text{disc}}
\newcommand{\Mat}{\text{Mat}}
\newtheorem{definition}{Definition}[section]
\newtheorem{theorem}{Theorem}[section]
\newtheorem{corollary}{Corollary}[section]
\newtheorem{lemma}{Lemma}[section]
\theoremstyle{remark}
\newtheorem{problem}{Open Problem}
\newtheorem{alg}{Algorithm}[section]
\newtheorem{remark}{Remark}[section]
\begin{document}
\title{The Twisted Euclidean Algorithm: Applications to Number Theory and Geometry}
\author{Arseniy (Senia) Sheydvasser}
\address{Department of Mathematics, Graduate Center at CUNY, 365 5th Ave, New York, NY 10016}
\email{ssheydvasser@gc.cuny.edu}

\subjclass[2010]{Primary 11Y40, 20G30, 51M10}

\date{\today}

\keywords{Involutions, Euclidean algorithm, quaternion algebras, hyperbolic 4-orbifolds}

\begin{abstract}
We introduce a generalization of the Euclidean algorithm for rings equipped with an involution, and completely enumerate all isomorphism classes of orders over definite, rational quaternion algebras equipped with an orthogonal involution that admit such an algorithm. We give two applications: first, any order that admits such an algorithm has class number $1$; second, we show how the existence of such an algorithm relates to the problem of constructing explicit Dirichlet domains for Kleinian subgroups of the isometry group of hyperbolic $4$-space.
\end{abstract}

\maketitle

\section{Introduction:}

For an algorithm that is thousands of years old, the Euclidean algorithm spawns new insights and new questions with remarkable regularity. Our present goal is to give an analog of the Euclidean algorithm that applies to any ring equipped with an involution---we shall demonstrate that many of the properties that are true of Euclidean rings continue to hold in this new setting, even though this new analog seems to apply to a strictly larger class of rings. We begin with some definitions.
    
\begin{definition}[See \cite{Involutions}]
An \emph{involution} on a ring $R$ is a map $\sigma: R \rightarrow R$ such that for all $x,y \in R$,
    \begin{enumerate}
        \item $\sigma(x + y) = \sigma(x) + \sigma(y)$,
        \item $\sigma(xy) = \sigma(y) \sigma(x)$, and
        \item $\sigma(\sigma(x)) = x$.
    \end{enumerate}
    
\noindent Any such ring has two important subsets.
    \begin{align*}
        R^+ := \left\{r\in R\middle|\sigma(r) = r\right\} \\
        R^- := \left\{r \in R \middle|\sigma(r) = -r\right\}.
    \end{align*}
    
\noindent A \emph{homomorphism of} ring with involution $\varphi: (R,\sigma_1) \rightarrow (S,\sigma_2)$ will be a ring homomorphism $\varphi: R \rightarrow S$ such that $\sigma_2 \circ \varphi = \varphi \circ \sigma_1$.
\end{definition}

Rings with involution are extremely common. We list a few basic examples:
    \begin{enumerate}
        \item If $R$ is a commutative ring, then one can simply take $\sigma$ to be the identity function.
        \item If $K$ if a Galois field extension of $F$ and $\sigma \in \text{Gal}(K/F)$ has order two, then $(K,\sigma)$ is a ring with involution.
        \item The Hamilton quaternions $H_\RR$ have two commonly referenced involutions:
            \begin{align*}
                \overline{x + yi + zj + tk} &= x - yi - zj - tk \\
                (x + yi + zj + tk)^\ddagger &= x + yi + zj - tk.
            \end{align*}
    \end{enumerate}

In this setting, we can define a generalization of Euclidean rings which makes use of the additional structure afforded by the involution.

\begin{definition}
Let $(R,\sigma)$ be a ring with involution and without zero divisors. Suppose there exists a well-ordered set $W$ and a function $\Phi: R \rightarrow W$ such that for all $a,b \in R$ such that $b \neq 0$ and $a\sigma(b) \in R^+$, there exists $q \in R^+$ such that $\Phi(a - bq) < \Phi(b)$. Then we say that $R$ is a $\sigma$-\emph{Euclidean ring}, with \emph{stathm}\footnote{The etymology of the word ``stathm" is a bit of a mystery to me. The earliest paper that I could track down that makes use of the word is due to Wedderburn, from 1931 \cite{Wedderburn1931}, but I suspect that it is far older.} $\Phi$.
\end{definition}

First, note that this definition includes the usual definition of Euclidean domains as a special case---simply take $\sigma$ to be the identity map. However, in the non-commutative case, these definitions diverge and we shall prove the existence of $\sigma$-Euclidean rings that are not Euclidean. On the other hand, we claim that $\sigma$-Euclidean rings are not as ad hoc as they might at first appear. In the first place, it is easy to see that the property of being $\sigma$-Euclidean is preserved by isomorphisms of rings with involution. Secondly, the usual Euclidean algorithm can be viewed as computing the GCD of a pair $(a,b) \in R^2$ and constructing a matrix with that as the top row---if the GCD of $a,b$ happens to be $1$, then this matrix is in $SL(2,R)$. Just so, we shall show that there is a twisted Euclidean algorithm that comptues the GCD of a pair $(a,b)$ and constructs a matrix with that as the top row---if the GCD of $a,b$ happens to be $1$, then this matrix is inside a group $SL^\sigma(2,R)$ which we shall define below. Additionally, we give two motivating examples of applications of $\sigma$-Euclidean rings.

\subsection{The Class Number 1 Problem:}

One of the most important properties of Euclidean domains is that they are always principal ideal domains; indeed, establishing an analog of the Euclidean algorithm is the oldest method of proving that a ring is a unique factorization domain. A similar result is true for $\sigma$-Euclidean rings: every invertible ideal that is generated by two elements $a,b$ such that $a\sigma(b) \in R^+$ is principal if $R$ is $\sigma$-Euclidean---see Corollary \ref{Almost Principal Ring}. There may be a large class of rings where every invertible ideal is so generated, but we provide one specific example: if $R$ is an order of a quaternion algebra $H$ over an algebraic number field $K$ and $\sigma$ extends to an orthogonal involution on $H$. This has the following important consequence.

\begin{theorem}\label{Class Number 1 Theorem}
Let $H$ be a quaternion algebra over an algebraic number field $K$ with orthogonal involution $\sigma$. If $\OO$ is an order of $H$ and a Euclidean ring, then $\OO$ has right class number $1$.
\end{theorem}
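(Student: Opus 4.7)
My plan is to deduce the theorem from two ingredients. The first is Corollary \ref{Almost Principal Ring}, which guarantees that in a $\sigma$-Euclidean ring every invertible ideal generated by a pair $(a,b)$ with $a\sigma(b) \in R^+$ is automatically principal. The second is the assertion made just before the theorem statement: for an order $\OO$ in a quaternion algebra equipped with an orthogonal involution, \emph{every} invertible right ideal admits generators of this ``symmetric'' form. Combined with the hypothesis that $\OO$ is $\sigma$-Euclidean, these two inputs are enough.

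Granted both ingredients, the deduction is essentially immediate. I would fix an invertible right fractional $\OO$-ideal $I$ and, after clearing denominators, assume $I \subseteq \OO$. The second ingredient produces $a, b \in \OO$ with $I = a\OO + b\OO$ and $a\sigma(b) \in \OO^+$, and Corollary \ref{Almost Principal Ring} then gives $g \in H^\times$ with $I = g\OO$. Since every class of invertible right $\OO$-ideals contains such an integral representative, the right class number of $\OO$ is $1$.

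The hard part --- the substantive content of the theorem, independent of the $\sigma$-Euclidean machinery --- is the existence of the symmetric generating pair, and it is here that orthogonality of $\sigma$ will be essential. Because an orthogonal involution has the form $\sigma(x) = u\bar{x}u^{-1}$ for some pure quaternion $u$, the fixed space $\OO^+$ is a rank-$3$ lattice over the ring of integers of $K$, in sharp contrast to the rank-$1$ fixed space of the canonical involution. Starting from any pair of generators $(a,b)$ of $I$ --- which exist because right ideals in a quaternion order are $2$-generated --- I would try to replace $b$ by $b + ac$ and exploit the identity
\[
    a\sigma(b + ac) = a\sigma(b) + a\sigma(c)\sigma(a),
\]
choosing $c \in \OO$ so as to cancel the antisymmetric component of $a\sigma(b)$. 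Arranging this simultaneously at every prime, rather than only locally, is where I expect the real obstacle to lie: a strong-approximation argument, or an explicit normal form for $\OO$ adapted to $\sigma$, will likely be required.
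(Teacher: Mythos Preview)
Your proposal is correct and matches the paper's approach exactly at the structural level: the paper deduces Theorem~\ref{Class Number 1 Theorem} as an immediate special case of Corollary~\ref{Class Number 1 Corollary}, whose proof is precisely the combination of Theorem~\ref{Right Ideals Are Nice} (every invertible right ideal has generators $x,y$ with $xy^\ddagger\in\OO^+$) with Corollary~\ref{Almost Principal Ring}. Your diagnosis of where the work lies and your anticipation of strong approximation are also accurate; the paper's proof of Theorem~\ref{Right Ideals Are Nice} differs only in organization---rather than adjusting $b\mapsto b+ac$ directly, it first proves a local lemma (Lemma~\ref{Local generation of ideals}) that each $I_\mathfrak{p}$ is principal with a generator lying in $\OO_\mathfrak{p}^+$, and then invokes strong approximation for $SL(2,H)$ (viewed as a spin group) to produce a global $\gamma\in SL(2,\OO)$ carrying an arbitrary generating pair to one of the desired form modulo a suitable rational integer.
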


We will make use of this result to enumerate all $\sigma$-Euclidean rings that arise as orders of rational, definite quaternion algebras equipped with an orthogonal involution.

\begin{theorem}\label{Enumeration Theorem}
Let $H$ be a rational, definite quaternion algebra with orthogonal involution $\sigma$. Let $\OO$ be an order of $H$. It is a $\sigma$-Euclidean ring if and only if it is isomorphic to one of the rings with involution listed in Table \ref{All Euclidean Orders Short}.
\end{theorem}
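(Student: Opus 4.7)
The plan is to attack this by a reduction-then-enumeration strategy, using Theorem \ref{Class Number 1 Theorem} as the key filter.

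First, I would invoke Theorem \ref{Class Number 1 Theorem} to conclude that any $\sigma$-Euclidean order $\OO$ must have right class number $1$. Since $H$ is rational and definite, the orders of right class number $1$ in such quaternion algebras have been classically enumerated (via the mass formula and the work of Brzezi\'nski, Pizer, and Eichler); this is a finite list, and for each $\OO$ on the list one then enumerates the orthogonal involutions $\sigma$ on $H$ that preserve $\OO$ (equivalently, those for which the pure-quaternion part of $H$ admits an orthogonal decomposition compatible with the $\ZZ$-structure of $\OO$). The upshot of this first step is that the set of candidate pairs $(\OO,\sigma)$ that could possibly be $\sigma$-Euclidean is finite and effectively computable; Table \ref{All Euclidean Orders Short} is then asserted to be exactly the sublist that actually admits a twisted Euclidean algorithm, and the theorem reduces to a finite case-by-case verification in two directions.

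Second, for the positive direction I would exhibit, for each pair $(\OO,\sigma)$ in Table \ref{All Euclidean Orders Short}, an explicit stathm $\Phi$, the natural candidate being the reduced norm (or a mild modification landing in $\NN$). The verification for each candidate is a covering statement: given $a,b \in \OO$ with $b \neq 0$ and $a\sigma(b) \in \OO^+$, one must produce $q \in \OO^+$ with $\nrm(a-bq) < \nrm(b)$. The condition $a\sigma(b) \in \OO^+$ forces $ab^{-1}$ to lie in the subspace $H^+ \subset H_\RR$, so the problem becomes: does the sublattice $b\OO^+$ cover the $\RR$-subspace $H^+$ tightly enough (measured by $\nrm$) for every admissible $a$? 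This is a finite-dimensional geometry-of-numbers check, one pair $(\OO,\sigma)$ at a time, and I would expect the same techniques used to verify norm-Euclideanness of specific quaternion orders (checking the covering radius of a fundamental parallelepiped) to carry through with the dimension reduction imposed by the $\sigma$-symmetry.

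Third, and here lies the main obstacle, for each pair $(\OO,\sigma)$ not appearing in Table \ref{All Euclidean Orders Short}, I must show that \emph{no} stathm $\Phi$ at all makes $\OO$ into a $\sigma$-Euclidean ring. The standard tool is a Motzkin-style obstruction: iteratively strip off the minimal elements under any hypothetical $\Phi$ and show the process stalls. Concretely, call $b \in \OO$ a $\sigma$-\emph{side divisor} if for every $a \in \OO$ with $a\sigma(b) \in \OO^+$, some $q \in \OO^+$ lifts $a$ out of a fixed small set; then a $\sigma$-Euclidean structure exists iff iterated removal of side divisors exhausts $\OO$. For each excluded candidate, one must exhibit an explicit finite obstruction (typically a pair $a,b$ with $a\sigma(b) \in \OO^+$ whose coset $a + b\OO^+$ contains no unit and no element that could further reduce) preventing this exhaustion. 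Producing these witnesses in the remaining class-number-$1$ cases not listed in the table is the delicate combinatorial heart of the theorem, and I expect it to account for the bulk of the paper's computational content.
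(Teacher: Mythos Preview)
Your overall reduction-then-enumeration architecture matches the paper: reduce to right class number $1$ via Theorem~\ref{Class Number 1 Theorem}, invoke Brzezinski's finite list, enumerate the orthogonal involutions compatible with each order, and then verify the positive cases by a covering-radius argument for the norm on $\OO^+$ (this is exactly Theorem~\ref{Covering Measure} and Table~\ref{Basic Properties Table}). Where you diverge is in the negative direction and in the handling of non-maximal orders.

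For ruling out the non-listed candidates, the paper does \emph{not} use a Motzkin-type transfinite obstruction. Instead it exploits Corollary~\ref{Generation of SL}: if $\OO$ were $\ddagger$-Euclidean then $SL^\ddagger(2,\OO)=E^\ddagger(2,\OO)$, and the author's earlier work \cite{Sheydvasser2019} already shows that for maximal $\ddagger$-orders with $\OO\cap\QQ(i)$ Euclidean, failure of norm-$\ddagger$-Euclideanness forces $E^\ddagger(2,\OO)\subsetneq SL^\ddagger(2,\OO)$. One residual case ($\disc(H)=5$, $\disc(\ddagger)=-2$) is dispatched by an ad hoc containment $E^\ddagger(2,\OO)\subset SL^\ddagger(2,\OO')\subsetneq SL^\ddagger(2,\OO)$. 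This group-theoretic obstruction is stathm-agnostic by construction and sidesteps the case analysis you anticipate; your Motzkin route is plausible in principle but would require you to formulate and prove a $\sigma$-twisted Motzkin criterion and then produce witnesses for roughly six excluded isomorphism classes, which is substantially more work.

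You also omit a structural step the paper relies on: Theorem~\ref{Euclidean Means Almost Maximal} shows that a $\ddagger$-order $\OO$ is $\ddagger$-Euclidean if and only if some maximal $\ddagger$-order $\OO'\supset\OO$ is $\ddagger$-Euclidean with ${\OO'}^+=\OO^+$. This reduces the entire problem to maximal $\ddagger$-orders and then recovers the two non-maximal entries in Table~\ref{All Euclidean Orders Short} (equivalently Table~\ref{Euclidean Ring Table II}) by a trivial lattice-index check. Without this reduction your finite list from Brzezinski is still finite, but you would be checking many more $(\OO,\sigma)$ pairs than necessary, and you would need a separate argument to confirm that no further non-maximal $\ddagger$-orders sneak in.
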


    \begin{table}
        \begin{minipage}{.45\textwidth}
    \begin{align*}
    \begin{array}{ll}
    \ZZ \oplus \ZZ i \oplus \ZZ j \oplus \ZZ \frac{1 + i + j + ij}{2} &\subset \left(\frac{-1,-1}{\QQ}\right) \\
    \ZZ \oplus \ZZ i \oplus \ZZ j \oplus ij &\subset \left(\frac{-1,-1}{\QQ}\right) \\
    \ZZ \oplus \ZZ i \oplus \ZZ \frac{1 + i + j}{2} \oplus \ZZ \frac{1 + i + ij}{2} &\subset \left(\frac{-1,-2}{\QQ}\right) \\
    \ZZ \oplus \ZZ i \oplus \ZZ \frac{i + j}{2} \oplus \ZZ \frac{2 + ij}{4} &\subset \left(\frac{-2,-6}{\QQ}\right) \\
    \ZZ \oplus \ZZ i \oplus \ZZ \frac{i + j}{2} \oplus \ZZ \frac{ij}{2} &\subset \left(\frac{-2,-6}{\QQ}\right) \\
    \ZZ \oplus \ZZ i \oplus \ZZ \frac{1 + j}{2} \oplus \ZZ \frac{i + ij}{2} &\subset \left(\frac{-2,-3}{\QQ}\right) \\
    \ZZ \oplus \ZZ i \oplus \ZZ \frac{1 + i + j}{2} \oplus \ZZ \frac{1 + i + ij}{2} &\subset \left(\frac{-1,-10}{\QQ}\right) \\
    \ZZ \oplus \ZZ i \oplus \ZZ \frac{i + j}{2} \oplus \ZZ \frac{1 + ij}{2} &\subset \left(\frac{-1,-3}{\QQ}\right)
    \end{array}
    \end{align*}
    \end{minipage}%
    \begin{minipage}{.45\textwidth}
    \begin{align*}
    \begin{array}{ll}
    \ZZ \oplus \ZZ i \oplus \ZZ \frac{1 + j}{2} \oplus \ZZ \frac{i + k}{2} &\subset \left(\frac{-1,-3}{\QQ}\right) \\
    \ZZ \oplus \ZZ i \oplus \ZZ \frac{1 + i + j}{2} \oplus \ZZ \frac{1 + i + ij}{2} &\subset \left(\frac{-1,-6}{\QQ}\right) \\
    \ZZ \oplus \ZZ i \oplus \ZZ \frac{2 + i + j}{4} \oplus \ZZ \frac{2 + 2i + ij}{4} &\subset \left(\frac{-2,-10}{\QQ}\right) \\
    \ZZ \oplus \ZZ i \oplus \ZZ \frac{1 + i + j}{2} \oplus \ZZ \frac{i + ij}{2} &\subset \left(\frac{-2,-5}{\QQ}\right) \\
    \ZZ \oplus \ZZ i \oplus \ZZ \frac{i + j}{2} \oplus \ZZ \frac{1 + ij}{2} &\subset \left(\frac{-1,-7}{\QQ}\right) \\
    \ZZ \oplus \ZZ i \oplus \ZZ \frac{1 + j}{2} \oplus \ZZ \frac{i + k}{2} &\subset \left(\frac{-1,-7}{\QQ}\right) \\
    \ZZ \oplus \ZZ i \oplus \ZZ \frac{2 + i + j}{4} \oplus \ZZ \frac{2 + 2i + ij}{4} &\subset \left(\frac{-2,-26}{\QQ}\right)
    \end{array}
    \end{align*}
    \end{minipage}

    \caption{All $\sigma$-Euclidean rings that arise as orders of rational, definite quaternion algebras equipped with an orthogonal involution, up to isomorphism (as rings with involution). The involution is always taken to be $\sigma(z) = (ij)\overline{z}(ij)^{-1}$, where $\overline{z}$ denotes the quaternion conjugate of $z$.}
    \label{All Euclidean Orders Short}
    \end{table}

\subsection{Efficient Construction of Dirichlet Domains:}

If $R$ is a Euclidean domain, then it follows that $SL(2,R)$ is generated by elementary matrices. It turns out that something very similar is true for rings with involution. There is a corresponding group $SL^\sigma(2,R)$, which can be defined as follows. Let $(R,\sigma)$ be any ring with involution. Then $\Mat(2,R)$, the collection of $2\times 2$ matrices with coefficients in $R$, has an involution defined on it by
    \begin{align*}
        \hat{\sigma}: \Mat(2,R) &\rightarrow \Mat(2,R) \\
        \begin{pmatrix} a & b \\ c & d \end{pmatrix} &\mapsto \begin{pmatrix} \sigma(d) & -\sigma(b) \\ -\sigma(c) & \sigma(a) \end{pmatrix}.
    \end{align*}
    
\noindent Then there is a corresponding group---which we shall call the twisted special linear group---
    \begin{align*}
    SL^\sigma(2,R) := \left\{M \in \Mat(2,R)\middle|M\hat{\sigma}(M) = \hat{\sigma}(M)M = 1\right\}
    \end{align*}
    
\noindent which was defined in \cite{Sheydvasser2020algebraic}. An important subgroup is $E^\sigma(2,R)$, generated by elements of the form
    \begin{align*}
    \begin{pmatrix} 1 & \tau \\ 0 & 1 \end{pmatrix}, \begin{pmatrix} u & 0 \\ 0 & \sigma(u)^{-1} \end{pmatrix}, \begin{pmatrix} 0 & 1 \\ -1 & 0 \end{pmatrix},
    \end{align*}
    
\noindent where $\tau \in R^+$ and $u \in R^\times$. If $R$ is $\sigma$-Euclidean, then $SL^\sigma(2,R) = E^\sigma(2,R)$---see Corollary \ref{Generation of SL}. Such groups appear in various contexts. If $R$ is commutative and $\sigma$ is the identity map, then $SL^\sigma(2,R) = SL(2,R)$. A different example due to work of Vahlen \cite{Vahlen1902} is that if you take $H_\RR$ to be the standard Hamilton quaternions and $\ddagger$ an orthogonal involution on $H_\RR$, then $SL^\ddagger(2,H_\RR)/\left\{\pm id\right\} \cong \text{Isom}^0(\HH^4) \cong SO^+(4,1)$, the orientation-preserving isometry group of hyperbolic $4$-space. It was also demonstrated in \cite{Sheydvasser2020algebraic} that if $F$ is an algebraically closed field with characteristic not $0$, $A$ is a central simple algebra over $F$, and $\sigma$ is an $F$-linear involution on $A$, then $SL^\sigma(2,A)$ is either a symplectic or orthogonal group, depending on the dimension of $A^+$. If $\OO$ is an order closed under $\sigma$, then $SL^\sigma(2,\OO)$ is an arithmetic subgroup of $SL^\sigma(2,A)$. In particular, if we restrict to the case where $F = \QQ$, $H$ is a definite, rational quaternion algebra, and $\sigma$ is not quaternion conjugation, then $SL^\sigma(2,\OO)/\{\pm id\}$ is a Kleinian subgroup of $\text{Isom}^0(\HH^4)$. Thus, we can define a quotient orbifold $\HH^4/SL^\sigma(2,\OO)$ and study its topological and geometric properties in terms of the algebraic properties of $\OO$. If $\OO$ happens to be $\sigma$-Euclidean, then this is particularly easy to do.

\begin{theorem}\label{Main Theorem}
Let $H$ be a definite, rational quaternion algebra with orthogonal involution $\ddagger$. Let $\OO$ be an order of $H$ that is also a Euclidean $\ddagger$-ring. There exists an algorithm to compute a Dirichlet domain for the group $SL^\ddagger(2,\OO)$ acting on $\HH^4$. The corresponding quotient orbifold $\HH^4/SL^\ddagger(2,\OO)$ has exactly one cusp.
\end{theorem}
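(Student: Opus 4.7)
My plan is to construct the Dirichlet domain by the classical Ford-domain method with basepoint at the cusp at infinity, using the twisted Euclidean algorithm for two jobs: to collapse all parabolic fixed points onto $\infty$ (the single-cusp claim) and to turn the enumeration of isometric hemispheres into an effective terminating procedure. Throughout, I would realize $\HH^4$ as the upper half-space $H_\RR^+\times\RR_{>0}$ with $SL^\ddagger(2,H_\RR)$ acting by the Vahlen--M\"obius action alluded to in the introduction. Writing $\Gamma:=SL^\ddagger(2,\OO)$, the stabilizer $\Gamma_\infty$ of $\infty$ is generated by the translations $\begin{pmatrix}1 & \tau\\0 & 1\end{pmatrix}$ with $\tau\in\OO^+$ together with the unit scalings $\begin{pmatrix}u & 0\\0 & \sigma(u)^{-1}\end{pmatrix}$ for $u\in\OO^\times$; since $H$ is definite, $\OO^+$ is a full-rank lattice in $H_\RR^+\cong\RR^3$ and $\OO^\times$ is finite, so $\Gamma_\infty$ is a crystallographic group with a computable Euclidean fundamental polytope $D_\infty\subset H_\RR^+$.

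For the single-cusp claim, every rational parabolic fixed point of $\Gamma$ can be written as $ab^{-1}$ with $a,b\in\OO$, $b\neq 0$, and lies on $\partial\HH^4=H_\RR^+\cup\{\infty\}$ precisely when $a\sigma(b)\in H^+$; after clearing denominators we may assume $a\sigma(b)\in\OO^+$. The $\ddagger$-Euclidean property then reduces such a pair by successive divisions with quotient in $\OO^+$, and each step is realized by an elementary matrix in $E^\ddagger(2,\OO)=\Gamma$ via Corollary \ref{Generation of SL}. Composing the resulting word with $\begin{pmatrix}0 & 1\\-1 & 0\end{pmatrix}$ produces an element of $\Gamma$ sending $ab^{-1}$ to $\infty$, so every cusp is $\Gamma$-equivalent to $\infty$.

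For the domain itself, I would take the moving basepoint $p_t=(0,t)$ and let $t\to\infty$; the limit of the Dirichlet domains centered at $p_t$ is the Ford domain
\[
F=\bigl(D_\infty\times\RR_{>0}\bigr)\cap\bigcap_{\gamma\in\Gamma\setminus\Gamma_\infty}\mathrm{Ext}(I_\gamma),
\]
where the isometric hemisphere $I_\gamma$ of $\gamma=\begin{pmatrix}a & b\\c & d\end{pmatrix}$ is a Euclidean hemisphere over $-c^{-1}d\in H_\RR^+$ (well-defined because $c\sigma(d)\in H^+$) of radius $\nrm(c)^{-1/2}$. The algorithm enumerates double cosets in $\Gamma_\infty\backslash\Gamma/\Gamma_\infty$ in order of increasing $\nrm(c)$; for each candidate bottom row $(c,d)\in\OO^2$ the twisted Euclidean algorithm decides whether $(c,d)$ extends to a matrix in $\Gamma$ and, when it does, returns that matrix explicitly.

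The main obstacle is termination: one must exhibit an a priori threshold $N$ on $\nrm(c)$ beyond which no further hemisphere can contribute, so that the procedure halts with a genuinely finite-sided polyhedron rather than an approximation. Discreteness of $\OO$ in $H_\RR$ makes the set $\{(c,d)\in\OO^2:\nrm(c)\le N\}$ finite modulo $\Gamma_\infty$ on either side, while the hyperbolic height of $I_\gamma$ is controlled by $\nrm(c)^{-1/2}$; combining these bounds with the cusp thickness associated to $D_\infty$ yields a computable halting condition, at which point finitely many hemispheres already cover everything lower than any still-uninspected hemisphere could reach. Producing that clean bound is the delicate technical step, but conceptually the whole argument is the classical Ford-domain construction for $SL(2,\ZZ)$ ported to the twisted quaternionic setting permitted by the identity $\Gamma=E^\ddagger(2,\OO)$.
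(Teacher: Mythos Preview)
Your Ford-domain strategy is a reasonable alternative, and your single-cusp argument via the twisted Euclidean algorithm is actually more direct than the paper's (the paper just reads off one cusp from the shape of its coarse domain). But the two approaches diverge exactly at the step you flag as delicate, and the paper handles termination by a different mechanism than the one you sketch. The paper does \emph{not} run a Swan-type search with an a posteriori coverage check. It centers a genuine Dirichlet domain at a finite point $z=\alpha+t\xi$ with $t>\sqrt{2}$ and then invokes the classification theorem (Theorem~\ref{Definite Euclidean DOrders}) to conclude that every $\ddagger$-Euclidean order in this setting is in fact \emph{norm} $\ddagger$-Euclidean, hence $\rho(\OO)<1$ by Theorem~\ref{Covering Measure}. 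That single inequality does all the work: the $\OO^+$-translates of the hemisphere attached to $z\mapsto -z^{-1}$ alone have radius at least $1$ and therefore already cover the floor of the prism $\mathcal{F}_z\times\RR_{>0}$, so the coarse domain $\mathcal{D}_z^*$ of Theorem~\ref{Coarse Fundamental Domain} sits above height $\sqrt{1-\rho(\OO)}>0$. Compactness of the truncated prism then gives an explicit diameter $L$, and Lemma~\ref{Trivial Bound} turns the bound $d(\gamma.z,z)\le L/2$ into an a priori bound on $\nrm(c)$ and $\nrm(d)$; Algorithm~\ref{Euclidean Algorithm} completes each surviving bottom row to a full matrix.

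Your proposal, by contrast, promises ``an a priori threshold $N$ on $\nrm(c)$'' but what you actually describe is an a posteriori halting test (enumerate hemispheres in order of $\nrm(c)$ until the visible ones cover down to the radius of the next candidate). That does terminate---your one-cusp argument guarantees the hemispheres eventually cover---but you have not produced the promised $N$, and the phrase ``combining these bounds with the cusp thickness associated to $D_\infty$'' is precisely the lemma that would have to be written out. The paper sidesteps this entirely by front-loading the hard work into the finite classification, so that $\rho(\OO)<1$ is available as a numerical input and the bound is immediate. Your route is more self-contained (it would not need Theorem~\ref{Definite Euclidean DOrders}); the paper's route is shorter once that theorem is in hand.
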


\section{The Twisted Euclidean Algorithm:}

In subsequent sections, we shall restrict to the special case where our rings will be orders of quaternion algebras and the involution is orthogonal. To start with, however, we shall define the twisted Euclidean algorithm and show consequences that apply to all rings with involution. So, given a stathm $\Phi$ for a $\sigma$-Euclidean ring $R$, let $f_\Phi: R \times R \rightarrow R \times R$ be a function such that if $a,b \in R$, $b \neq 0$, and $a\sigma(b) \in R^+$, then $f_\Phi(x,y) = (q,a - bq)$ such that $q \in R^+$ and $\Phi(a - bq) < \Phi(b)$.

\begin{alg}\label{Euclidean Algorithm}
On an input of $a,b \in R$ such that $a\sigma(b) \in R^+$, this algorithm returns finite sequences $r_i, s_i, t_i \subset R$ with $0 \leq i \leq k + 1$ satisfying the following properties.
    \begin{enumerate}
        \item $r_{k + 1} = 0$.
        \item $r_k$ is a right GCD of $a$ and $b$.
        \item $a\sigma(s_i) - b\sigma(t_i) = r_i$ for all $i$, and $s_i \sigma(t_i) \in R^+$.
        \item For all $i$,
            \begin{align*}
            \begin{pmatrix} t_i & s_i \\ (-1)^i t_{i + 1} & (-1)^i s_{i + 1} \end{pmatrix} \in E^\sigma(2,R).
            \end{align*}
    \end{enumerate}
\begin{algorithm}[H]
\begin{algorithmic}[1]
\Procedure{TwistedEuclideanAlg}{$x$,$y$}
\State $r_\text{list} \gets [a,b]$ \Comment We initialize lists $r_\text{list}, s_\text{list}, t_\text{list}$.
\State $s_\text{list} \gets [1,0]$
\State $t_\text{list} \gets [0,-1]$
\While{$r_\text{list}[-1] \neq 0$} \Comment Here $l[-1]$ denotes the last element of $l$.
    \State $r_{i - 2} \gets r_\text{list}[-2]$ \Comment Here $l[-2]$ denotes the second to last element of $l$.
    \State $r_{i - 1} \gets r_\text{list}[-1]$
    \State $(q, r_i) \gets f_\Phi(r_{i - 2}, r_{i - 1})$
    \State $\text{append}(r_\text{list},r_i)$ \Comment Here $\text{append}(l,x)$ denotes appending $x$ to the end of $l$.
    \State $s_{i - 2} \gets s_\text{list}[-2]$
    \State $s_{i - 1} \gets s_\text{list}[-1]$
    \State $s_i \gets s_{i - 2} - q s_{i - 1}$
    \State $\text{append}(s_\text{list},s_i)$
    \State $t_{i - 2} \gets t_\text{list}[-2]$
    \State $t_{i - 1} \gets t_\text{list}[-1]$
    \State $t_i \gets t_{i - 2} - q t_{i - 1}$
    \State $\text{append}(s_\text{list},t_i)$
\EndWhile
\State \textbf{return} $r_\text{list}, s_\text{list}, t_\text{list}$
\EndProcedure
\end{algorithmic}
\end{algorithm}
\end{alg}

\begin{remark}
With some minor differences, this algorithm appeared in the author's previous work on sphere packings \cite{Sheydvasser2019} in the context of orders of definite, rational quaternion algebras and $\Phi = \nrm$; in that context, it was connected to the question of whether the sphere packing is connected or not---this idea is itself adapted from Katherine Stange's work on circle packings \cite{Stange2017,StangeFuture}. The proof of correctness is essentially the same, but is reproduced both for convenience and to accommodate the more slightly general setting.
\end{remark}

\begin{remark}
If $R$ is a commutative ring and $\sigma$ is the identity function, then this is nothing more than the usual Euclidean algorithm.
\end{remark}

\begin{remark}
Technically, Algorithm \ref{Euclidean Algorithm} is an algorithm if and only if $f_\Phi$ is computable; otherwise, it is only a semi-algorithm.
\end{remark}

\begin{proof}[Proof of Correctness]
Note that, by construction, $\Phi(r_i) < \Phi(r_{i - 1})$ for all $i \geq 2$---since $\Phi$ maps to a well-ordered set, the sequence of $s_i$'s must be finite. This can only happen if it is eventually zero. Therefore, eventually $r_{k + 1} = 0$ for some large enough $k$, and the algorithm halts. To show that $r_k$ is then a right GCD of $a$ and $b$, we proceed by induction---specifically, we first prove that $r_k$ divides $r_i$ for all $i < k$. The base case follows from the observation that $0 = r_{k + 1} = r_{k - 1} - r_kq$, whence $r_{k - 1} = r_k q$. For all other $i$, we note that $r_i = r_{i + 1} - r_{i + 2}q$ for some $q \in R^+$, and since $r_k$ divides $r_{i + 1}$ and $r_{i + 2}$ by assumption, it divides $r_i$. Thus, $r_k$ divides $a$ and $b$. On the other hand, if $r \in R$ divides $a$ and $b$, then $r$ divides $r_2 = r_1 - r_0 q$, from which it follows by induction that it divides $r_i$ for all $i$. We conclude that $r_k$ is a right GCD of $a$ and $b$.

To prove that $a\sigma(s_i) - b\sigma(t_i) = r_i$ for all $i \leq k$, we again proceed by induction. It is obvious for $i = 0$ and $i = 1$. For all other $i$, we note that $s_i = s_{i - 2} - qs_{i - 1}$ and $t_i = t_{i - 2} - qt_{i - 1}$ for some $q \in R^+$, and therefore
    \begin{align*}
    a\sigma(s_i) - b\sigma(t_i) &= a\sigma\left(s_{i - 2} - qs_{i - 1}\right) - b\sigma\left(t_{i - 2} - qt_{i - 1}\right) \\
    &= a\sigma(s_{i - 2}) - b\sigma(t_{i - 2}) - \left(a\sigma(s_{i - 1}) - b\sigma(t_{i - 1})\right)q \\
    &= r_{i - 2} - r_{i - 1}q \\
    &= r_i.
    \end{align*}
    
\noindent On the other hand,
    \begin{align*}
    \begin{pmatrix} t_0 & s_0 \\ t_1 & s_1 \end{pmatrix} = \begin{pmatrix} 0 & 1 \\ -1 & 0 \end{pmatrix} \in E^\ddagger(2,R)
    \end{align*}
    
\noindent and
    \begin{align*}
    \begin{pmatrix} t_{i - 1} & s_{i - 1} \\ -t_{i} & -s_{i} \end{pmatrix} &= \underbrace{\begin{pmatrix} 0 & 1 \\ -1 & q \end{pmatrix}}_{\in E^\sigma(2,R)}\begin{pmatrix} t_{i - 2} & s_{i - 2} \\ t_{i - 1} & s_{i - 1} \end{pmatrix}, \\
    \begin{pmatrix} t_{i - 1} & s_{i - 1} \\ t_{i} & s_{i} \end{pmatrix} &= \underbrace{\begin{pmatrix} 0 & -1 \\ 1 & q \end{pmatrix}}_{\in E^\sigma(2,R)}\begin{pmatrix} t_{i - 2} & s_{i - 2} \\ -t_{i - 1} & -s_{i - 1} \end{pmatrix},
    \end{align*}
    
\noindent from which we get that
    \begin{align*}
    \begin{pmatrix} t_i & s_i \\ (-1)^i t_{i + 1} & (-1)^i s_{i + 1} \end{pmatrix} \in E^\sigma(2,R)
    \end{align*}
    
\noindent for all $i$, and from which it immediately follows that $s_i \sigma(t_i) \in R^+$.
\end{proof}

From the existence of this algorithm, we get two immediate consequences.

\begin{corollary}\label{Almost Principal Ring}
Let $R$ be a $\sigma$-Euclidean ring. Every invertible right ideal of $R$ that is generated by two elements $a,b$ such that $a\sigma(b) \in R^+$ is principal.
\end{corollary}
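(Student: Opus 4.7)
The plan is to apply Algorithm \ref{Euclidean Algorithm} directly to the pair of generators $(a,b)$. The hypothesis $a\sigma(b) \in R^+$ is precisely the input condition the algorithm requires, so by the proof of correctness the algorithm terminates and produces finite sequences $r_i, s_i, t_i$ with $r_{k+1} = 0$ and $r_k$ a right GCD of $a$ and $b$.

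Writing $I = aR + bR$, I would first use property (3) of Algorithm \ref{Euclidean Algorithm}, which supplies $s_k, t_k \in R$ with $r_k = a\sigma(s_k) - b\sigma(t_k)$. This exhibits $r_k$ as an element of $I$, giving $r_k R \subseteq I$. For the reverse inclusion, I would unpack the inductive argument in the proof of correctness: it shows each $r_i$ is produced as a left multiple of $r_k$, starting from $r_{k-1} = r_k q$ and propagating via $r_i = r_{i+1} - r_{i+2}q$. In particular $a = r_0$ and $b = r_1$ lie in $r_k R$, so $I \subseteq r_k R$. The two inclusions yield $I = r_k R$, proving $I$ is principal.

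The only subtlety worth flagging is the direction of divisibility: to generate the right ideal $aR + bR$ by a single element, we need $r_k$ as a \emph{left} factor of $a$ and $b$, and this is exactly the form in which the algorithm produces the GCD. Note that the hypothesis that $I$ is invertible plays no role in the argument above and is retained only because the corollary will be applied in contexts where invertibility is the natural property to track.
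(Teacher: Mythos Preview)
Your proposal is correct and follows essentially the same approach as the paper: apply Algorithm \ref{Euclidean Algorithm} to the generators and observe that the resulting right GCD generates $I$. The paper's proof is terser and separates out the trivial case $b = 0$ (which you omit, though the algorithm handles it vacuously), while you spell out the two inclusions $r_k R \subseteq I$ and $I \subseteq r_k R$ that the paper leaves implicit in the phrase ``GCD''; your remark that invertibility is not used in the argument is also apt.
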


\begin{proof}
Choose any right invertible ideal $I$ of $R$ such that $I = a R + b R$ where $a\sigma(b) \in R^+$. If $b = 0$, we are done; otherwise, apply Algorithm \ref{Euclidean Algorithm} to $a,b$ to produce a GCD $g$ of $a$ and $b$---then $I = gR$.
\end{proof}

\begin{corollary}\label{Generation of SL}
Let $R$ be a $\sigma$-Euclidean ring. Then $SL^\sigma(2,R) = E^\sigma(2,R)$.
\end{corollary}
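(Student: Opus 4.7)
The plan is to show that any $M \in SL^\sigma(2,R)$ lies in $E^\sigma(2,R)$ by feeding its top row into Algorithm \ref{Euclidean Algorithm} and using part (4) of that algorithm to factor $M$ explicitly. The reverse inclusion is a direct verification on the three generators.

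First I would unpack the equation $M\hat\sigma(M) = 1$ for $M = \begin{pmatrix} a & b \\ c & d \end{pmatrix}$, recording the consequences $a\sigma(b), c\sigma(d) \in R^+$ together with $a\sigma(d) - b\sigma(c) = d\sigma(a) - c\sigma(b) = 1$. The first of these certifies that $(a,b)$ is a legitimate input for Algorithm \ref{Euclidean Algorithm}, while the third rewrites as $1 \in aR + bR$, which forces the right GCD $r_k$ produced by the algorithm to be a unit of $R$.

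Next I would consider the matrix $N := \begin{pmatrix} t_k & s_k \\ (-1)^k t_{k+1} & (-1)^k s_{k+1} \end{pmatrix}$ supplied by part (4) at $i = k$, which lies in $E^\sigma(2,R) \subseteq SL^\sigma(2,R)$ and therefore satisfies $\hat\sigma(N) = N^{-1}$, and compute $L := M\hat\sigma(N)$. Using the identities $a\sigma(s_i) - b\sigma(t_i) = r_i$ from part (3), the top row of $L$ is immediately $(0, -r_k)$. The relation $L\hat\sigma(L) = 1$ then pins down the bottom row, producing a matrix of shape $\begin{pmatrix} 0 & -r_k \\ \sigma(r_k)^{-1} & d' \end{pmatrix}$ together with a positivity condition that, after a short $\sigma$-manipulation, becomes $\sigma(r_k) d' \in R^+$. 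This lets me factor $L$ as a product of three generators of $E^\sigma(2,R)$: the swap $\begin{pmatrix} 0 & -1 \\ 1 & 0 \end{pmatrix}$, the diagonal $\begin{pmatrix} \sigma(r_k)^{-1} & 0 \\ 0 & r_k \end{pmatrix}$ (which has the required form $\begin{pmatrix} u & 0 \\ 0 & \sigma(u)^{-1} \end{pmatrix}$ with $u = \sigma(r_k)^{-1}$), and the unipotent $\begin{pmatrix} 1 & \sigma(r_k) d' \\ 0 & 1 \end{pmatrix}$. Hence $L \in E^\sigma(2,R)$ and therefore $M = L N \in E^\sigma(2,R)$.

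The main bookkeeping hurdle is keeping signs and $\sigma$-twists straight well enough to extract the positivity statement $\sigma(r_k) d' \in R^+$ from the $SL^\sigma$-constraint on the bottom row of $L$; the rest is mechanical. The degenerate case $b = 0$ needs no separate argument, since Algorithm \ref{Euclidean Algorithm} then terminates immediately at $k = 0$ with $r_0 = a$ automatically a unit, and the same factorization goes through verbatim with $N = \begin{pmatrix} 0 & 1 \\ -1 & 0 \end{pmatrix}$.
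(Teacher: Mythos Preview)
Your proposal is correct and follows essentially the same route as the paper: feed the top row of $M$ into the twisted Euclidean algorithm, multiply $M$ by the inverse of the resulting $E^\sigma$-matrix to obtain a matrix with top row $(0,-r_k)$, then use the $SL^\sigma$ relations to see that $r_k$ is a unit and factor the remaining matrix explicitly into generators. The only cosmetic difference is that the paper handles $b=0$ by a separate direct factorization of the lower-triangular $M$, whereas you absorb it into the general argument by letting the algorithm terminate at $k=0$; your version is a mild streamlining, and the early claim that $1\in aR+bR$ forces $r_k$ to be a unit is strictly only a one-sided inverse in a noncommutative ring, but the subsequent use of $L\hat\sigma(L)=\hat\sigma(L)L=1$ supplies the two-sided invertibility anyway.
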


\begin{proof}
Choose any element
    \begin{align*}
    \begin{pmatrix} a & b \\ c & d \end{pmatrix} \in SL^\sigma(2,R).
    \end{align*}
    
\noindent If $b = 0$, we see that
    \begin{align*}
        \begin{pmatrix} a & 0 \\ c & d \end{pmatrix}\begin{pmatrix} \sigma(d) & 0 \\ -\sigma(c) & \sigma(a) \end{pmatrix} &= \begin{pmatrix} a\sigma(d) & * \\ * & * \end{pmatrix} = \begin{pmatrix} 1 & 0 \\ 0 & 1 \end{pmatrix} \\
        \begin{pmatrix} \sigma(d) & 0 \\ -\sigma(c) & \sigma(a) \end{pmatrix}\begin{pmatrix} a & 0 \\ c & d \end{pmatrix} &= \begin{pmatrix} \sigma(d)a & * \\ * & * \end{pmatrix} = \begin{pmatrix} 1 & 0 \\ 0 & 1 \end{pmatrix}
    \end{align*}
    
\noindent hence $a,d \in R^\times$. Therefore,
    \begin{align*}
    \begin{pmatrix} a & 0 \\ c & d \end{pmatrix} &= \begin{pmatrix} a & 0 \\ 0 & d \end{pmatrix} \begin{pmatrix} 1 & 0 \\ d^{-1}c & 1 \end{pmatrix} \\
    &= \begin{pmatrix} -a & 0 \\ 0 & -d \end{pmatrix} \begin{pmatrix} 0 & 1 \\ -1 & 0 \end{pmatrix} \begin{pmatrix} 1 & -d^{-1}c \\ 0 & 1 \end{pmatrix}\begin{pmatrix} 0 & 1 \\ -1 & 0 \end{pmatrix} \in E^\sigma(2,R).
    \end{align*}
    
\noindent Otherwise, apply Algorithm \ref{Euclidean Algorithm} to $a,b$, so we get that
    \begin{align*}
    \begin{pmatrix} t_k & s_k \\ (-1)^k t_{k + 1} & (-1)^k s_{k + 1} \end{pmatrix} \in E^\sigma(2,R)
    \end{align*}
    
\noindent with $a\sigma(s_i) - b\sigma(t_i) = r_i$ and $r_{k + 1} = 0$. Therefore,
    \begin{align*}
    \begin{pmatrix} a & b \\ c & d \end{pmatrix} \begin{pmatrix} t_k & s_k \\ (-1)^k t_{k + 1} & (-1)^k s_{k + 1} \end{pmatrix}^{-1} &= \begin{pmatrix} a & b \\ c & d \end{pmatrix} \begin{pmatrix} (-1)^k \sigma(s_{k + 1}) & -\sigma(s_k) \\ -(-1)^k \sigma(t_{k + 1}) & \sigma(t_k) \end{pmatrix} \\
    &= \begin{pmatrix} 0 & -r_k \\ * & * \end{pmatrix} \\
    &= \begin{pmatrix} 0 & -1 \\ 1 & 0 \end{pmatrix}\begin{pmatrix} * & * \\ 0 & r_k \end{pmatrix}.
    \end{align*}
    
\noindent Since this final matrix must be in $SL^\sigma(2,R)$, we see that in fact $r_k\sigma(r_k) = \sigma(r_k)r_k = 1$, and so for some $q \in R^+$, we have a decomposition
    \begin{align*}
    \begin{pmatrix} a & b \\ c & d \end{pmatrix} &= \begin{pmatrix} t_k & s_k \\ (-1)^k t_{k + 1} & (-1)^k s_{k + 1} \end{pmatrix}\begin{pmatrix} 0 & -1 \\ 1 & 0 \end{pmatrix}\begin{pmatrix} \sigma(r_k)^{-1} & 0 \\ 0 & r_k\end{pmatrix}\begin{pmatrix} 1 & q \\ 0 & 1 \end{pmatrix} \in E^\sigma(2,R)
    \end{align*}
    
\noindent as desired.
\end{proof}

\section{Basic Notions of Quaternion Algebras with Involution:}

We shall now review basic definitions and results about quaternion algebras equipped with an orthogonal involution. First, we recall the definition of an orthogonal involution.

\begin{definition}[See \cite{Involutions}]
Given a central simple algebra $A$ over a field $F$, an \emph{involution of the first kind} is an involution $\sigma: A \rightarrow A$ which is also an $F$-algebra homomorphism.
\end{definition}

All involutions of the first kind are either \emph{symplectic} or \emph{orthogonal} depending on whether an associated bilinear form is alternating or symmetric---rather than giving the general definition, we will concentrate on the special case of quaternion algebras, where the general theory is especially simple. Recall that a \emph{quaternion algebra} $H$ is a central simple algebra over a field $F$ of degree $2$. If $F$ does not have characteristic $2$, then one can find a standard basis $1,i,j,ij$ for $H$ such that $i^2 = a$, $j^2 = b$, and $ij = -ji$ for some $a,b \in F^\times$. It is standard to write
    \begin{align*}
    \left(\frac{a,b}{F}\right)
    \end{align*}
    
\noindent to denote the quaternion algebra over $F$ with this basis. Involutions of the first kind on quaternion algebras are very restricted.

\begin{theorem}[See \cite{Involutions}]\label{Classifying Involutions}
Let $H$ be a quaternion algebra over a field $F$. The only involutions of the first kind on $H$ are
    \begin{enumerate}
        \item the standard involution $z \mapsto \overline{z}$, also known as quaternion conjugation (this is the unique symplectic involution) and
        \item involutions $z \mapsto u\overline{z}u^{-1}$ where $u \in H^\times$ such that $u^2 \in F$ (these are the orthogonal involutions).
    \end{enumerate}
    
\noindent Given an orthogonal involution $z \mapsto z^\ddagger$, if $F$ does not have characteristic $2$, one can choose a standard basis $1,i,j,k$ for $H$ such that $i^2 = a$, $j^2 = b$ for some $a,b \in F^\times$, $ij = -ji$, and $(w + xi + yj + zk)^\ddagger = w + xi + yj -zk$.
\end{theorem}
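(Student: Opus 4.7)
The plan is to bootstrap from the standard involution $\overline{\cdot}$ via the Skolem--Noether theorem. Given any involution of the first kind $\sigma$ on $H$, the composite $\sigma \circ \overline{\cdot}$ is an $F$-algebra automorphism (being the composition of two $F$-linear anti-automorphisms), so Skolem--Noether makes it inner: $\sigma(\overline{z}) = u z u^{-1}$ for some $u \in H^\times$, equivalently $\sigma(z) = u \overline{z} u^{-1}$. Expanding $\sigma^2(z) = z$ using $\overline{xy} = \overline{y}\,\overline{x}$ yields $(u (\overline{u})^{-1}) z (u (\overline{u})^{-1})^{-1} = z$ for all $z$, so $u (\overline{u})^{-1}$ lies in $Z(H) = F$. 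Writing $u = \lambda \overline{u}$ with $\lambda \in F^\times$ and applying $\overline{\cdot}$ once more gives $\lambda^2 = 1$. The case $\lambda = 1$ forces $u = \overline{u}$, hence $u \in F$ and $\sigma$ is the standard involution; the case $\lambda = -1$ forces $u$ to be pure, whence $u^2 = -u \overline{u} = -\nrm(u) \in F$. This covers the classification in part (1).

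For part (2), assume $\sigma$ is orthogonal, so $\sigma(z) = u \overline{z} u^{-1}$ with $u$ pure and $u^2 \in F^\times$. I would work inside the $3$-dimensional subspace $H_0 \subset H$ of pure (trace-zero) quaternions, equipped with the canonical non-degenerate quadratic form $\beta \mapsto \beta^2 = -\nrm(\beta)$. A pure quaternion $\beta$ lies in $H^+$ precisely when $u \beta = -\beta u$, which in characteristic $\neq 2$ is equivalent to $\beta$ being orthogonal to $u$ with respect to that form; consequently $H^+ \cap H_0 = u^\perp$ is a $2$-plane. Since $u$ is anisotropic ($u^2 \neq 0$), the restriction of the form to $u^\perp$ is still non-degenerate, so I can pick $i \in u^\perp$ with $i^2 = a \in F^\times$, and then a generator $j$ of the $1$-dimensional orthogonal complement of $i$ inside $u^\perp$, with $j^2 = b \in F^\times$. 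By construction $i, j \in H^+$, and $i \perp j$ in $H_0$ forces $ij = -ji$, so $1, i, j, k := ij$ is a standard basis. Finally $\sigma(k) = \sigma(j)\sigma(i) = ji = -ij = -k$, so $\sigma$ acts exactly as claimed.

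The only genuinely delicate input is the non-degeneracy of the reduced norm on $H_0$, which is standard for central simple algebras of dimension $4$ in characteristic $\neq 2$; once this is granted, the existence of anisotropic $i \in u^\perp$ and anisotropic $j$ in the $1$-dimensional complement of $i$ is automatic, because a non-degenerate quadratic form over $F$ in characteristic $\neq 2$ must represent a nonzero value. Beyond this point the argument is purely formal linear algebra, with Skolem--Noether carrying all the algebraic weight.
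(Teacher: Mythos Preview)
The paper does not supply its own proof of this theorem; it is quoted from the literature (the reference \cite{Involutions}, i.e.\ the Book of Involutions). Your argument is correct and is essentially the standard one found there: compose with the standard involution, invoke Skolem--Noether to write $\sigma(z) = u\overline{z}u^{-1}$, unwind $\sigma^2 = \mathrm{id}$ to obtain $u\overline{u}^{-1} = \lambda \in F^\times$ with $\lambda^2 = 1$, and split into the two cases. The construction of the adapted basis in the orthogonal case by diagonalizing the norm form on the $2$-plane $u^\perp$ inside the trace-zero quaternions is likewise the expected route.

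One small caveat: your dichotomy $\lambda = \pm 1$ collapses in characteristic $2$, so as written the classification portion of your argument only covers characteristic $\neq 2$. The theorem statement reserves the $\mathrm{char}\, F \neq 2$ hypothesis for the basis assertion alone, but since the paper only ever applies this result over number fields and their completions, nothing downstream is affected.
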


\begin{remark}
We shall consistently use the notation $z \mapsto z^\ddagger$ to denote orthogonal involutions on quaternion algebras. This choice is motivated by the fact that the adjugate map $\Mat(2,F)$ is typically denoted by $\dagger$---however, this is just the standard involution on the quaternion algebra $\Mat(2,F)$.
\end{remark}

Ultimately, we shall be interested in studying orders that are closed under involutions. For a variety of reasons, the standard involution is far more studied in this context than the orthogonal involutions. The standard involution is useful in defining the (reduced) norm $\nrm(z) = z\overline{z}$ and the (reduced) trace $\tr(z) = z + \overline{z}$, for instance; one can classify quaternion algebras $H$ up to isomorphism by considering the quadratic form $z \mapsto \nrm(z)$ restricted to $H^0$, the subspace of $H$ with trace $0$. However, for our purposes, orthogonal involutions will be much more important. To see why, note that $H = H^+ \oplus H^-$ for any involution. Furthermore, from Theorem \ref{Classifying Involutions}, it is easy to see that for any orthogonal involution $\ddagger$ and $F$ is not characteristic $2$, $\dim(H^+) = 3$ and $\dim(H^-) = 1$. The fact that $H^+$ is three-dimensional is crucial to the proof of Theorem \ref{Class Number 1 Theorem}. On the other hand, the fact that $\dim(H^-) = 1$ allows us to make the following definition.

\begin{definition}[See \cite{Involutions}]
Let $H$ be a quaternion algebra over a field $F$ with characteristic not $2$, and with orthogonal involution $\ddagger$. The \emph{discriminant} of $\ddagger$ is
    \begin{align*}
    \disc(\ddagger) = -\nrm(\xi) \left(F^\times\right)^2 \in F^\times/\left(F^\times\right)^2
    \end{align*}
    
\noindent where $\xi \in H^- \cap H^\times$.
\end{definition}

The most important property of the discriminant is that it uniquely characterizes the involution up to isomorphism.

\begin{theorem}[See \cite{Involutions}]
Let $H$ be a quaternion algebra over a field $F$ with characteristic not $2$. Let $\ddagger_1,\ddagger_2$ be orthogonal involutions of $H$. Then the following are equivalent.
    \begin{enumerate}
        \item $(H,\ddagger_1) \cong (H,\ddagger_2)$.
        \item There exists $u \in H^\times$ such that $\tr(u) = 0$ and $z^{\ddagger_1} = uz^{\ddagger_2}u^{-1}$.
        \item $\disc(\ddagger_1) = \disc(\ddagger_2)$.
    \end{enumerate}
\end{theorem}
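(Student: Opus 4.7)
The plan is to use the classification in Theorem \ref{Classifying Involutions} to write each orthogonal involution in the explicit form $z^{\ddagger_i} = u_i \overline{z}\, u_i^{-1}$ for a trace-zero $u_i \in H^\times$, with $u_i$ unique up to $F^\times$-scaling. Since $H^- \cap H^\times = F^\times u_i$, this gives $\disc(\ddagger_i) = -\nrm(u_i) \pmod{(F^\times)^2}$, and the whole theorem then reduces to manipulations of the $u_i$ via Skolem--Noether.

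For $(1) \Rightarrow (3)$, any $F$-algebra isomorphism $\varphi: (H, \ddagger_1) \to (H, \ddagger_2)$ is inner by Skolem--Noether, $\varphi = \varphi_v$ with $\varphi_v(z) = vzv^{-1}$. Expanding the compatibility $\varphi_v \circ \ddagger_1 = \ddagger_2 \circ \varphi_v$ using $\overline{vzv^{-1}} = \overline{v}^{-1} \overline{z}\, \overline{v}$ and comparing the two resulting inner automorphisms of $H$ applied to $\overline{z}$ collapses cleanly to the single relation $v u_1 \overline{v} = \lambda u_2$ for some $\lambda \in F^\times$. Taking reduced norms yields $\nrm(v)^2 \nrm(u_1) = \lambda^2 \nrm(u_2)$, and hence $\nrm(u_1)/\nrm(u_2) \in (F^\times)^2$, i.e.\ $\disc(\ddagger_1) = \disc(\ddagger_2)$.

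For $(3) \Rightarrow (1)$, equality of discriminants gives $\nrm(u_1) = c^2 \nrm(u_2)$ for some $c \in F^\times$; replacing $u_2$ by $cu_2$ (which leaves $\ddagger_2$ unchanged) I may assume $\nrm(u_1) = \nrm(u_2)$. Then $u_1, u_2$ are trace-zero elements with equal norm, hence satisfy the common minimal polynomial $X^2 + \nrm(u_1)$, and Skolem--Noether yields $v \in H^\times$ with $vu_1 v^{-1} = u_2$. Equivalently $v u_1 \overline{v} = \nrm(v) u_2$, and reversing the preceding calculation shows $\varphi_v$ is the desired ring-with-involution isomorphism.

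Condition $(2)$ is addressed by computing the composition $\ddagger_1 \circ \ddagger_2$ directly: using $\overline{u_i} = -u_i$ (since $u_i \in H^0$) one finds $\ddagger_1 \circ \ddagger_2 = \varphi_{u_1 u_2^{-1}}$, so $(2)$ holds iff the coset $F^\times \cdot u_1 u_2^{-1}$ contains a trace-zero representative. The main obstacle will be linking $(2)$ back to the other two conditions: for $(2) \Rightarrow (1)$ one must assemble the given trace-zero $u$ with $u_2$ into a ring-with-involution isomorphism, while for $(3) \Rightarrow (2)$ one must exploit the scaling freedom in $u_1, u_2$ together with the $v$ produced in the Skolem--Noether step to secure a trace-zero intertwiner. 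This last piece of bookkeeping, relying on the geometry of the reduced norm form on $H^0$, is where the argument will need the most care.
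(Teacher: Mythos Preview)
The paper does not prove this theorem; it is quoted from \cite{Involutions}, so there is no argument in the paper to compare your proposal against.

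On its own merits, your treatment of $(1) \Leftrightarrow (3)$ is correct and is the standard route: write $\ddagger_i$ as $z \mapsto u_i \overline{z}\, u_i^{-1}$ with $u_i \in H^0 \cap H^\times$ via Theorem~\ref{Classifying Involutions}, invoke Skolem--Noether to make any $F$-algebra isomorphism inner, extract the relation $v u_1 \overline{v} = \lambda u_2$, and compare reduced norms modulo squares. The converse via equal minimal polynomials and another application of Skolem--Noether is also fine.

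Your hesitation about $(2)$, however, is not a matter of ``bookkeeping'': as literally stated in the paper, $(2)$ is \emph{not} equivalent to $(1)$ and $(3)$, so no argument will close the loop. You correctly reduce $(2)$ to the existence of a trace-zero element in the coset $F^\times u_1 u_2^{-1}$; since scaling does not affect vanishing of the reduced trace, this is the single condition $\tr(u_1 u_2^{-1}) = 0$, equivalently $u_1 u_2 + u_2 u_1 = 0$. Now take $\ddagger_1 = \ddagger_2$: conditions $(1)$ and $(3)$ hold trivially, but $(2)$ would force $u \in Z(H) = F$ with $\tr(u) = 2u = 0$, which is impossible in characteristic $\neq 2$. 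In the other direction, with $u_1 = i$ and $u_2 = j$ in $\left(\frac{-1,-2}{\QQ}\right)$ one has $ij = -ji$, so $(2)$ holds with $u = ij$, yet $\disc(\ddagger_1) = -1 \neq -2 = \disc(\ddagger_2)$ in $\QQ^\times/(\QQ^\times)^2$. Thus $(2)$ is independent of the other two conditions. What the paper presumably intends---and what it actually uses in the proof of correctness of Algorithm~\ref{Candidate Algorithm}---is that some trace-zero $u \in H^\times$ satisfies $u u_1 u^{-1} \in F^\times u_2$, i.e.\ that conjugation by $u$ is itself an isomorphism $(H,\ddagger_1) \to (H,\ddagger_2)$. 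Your proposal is complete for $(1) \Leftrightarrow (3)$; for $(2)$ the right move is to flag the misstatement rather than attempt to prove it as written.
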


For local and global fields, there is another common notion of discriminant, which we must also refer to.

\begin{definition}
Let $H$ be a quaternion algebra over a local or global field $F$. Let $\nu$ be a place of $F$. We say that $H$ \emph{ramifies} at $\nu$ if $H_\nu := H \otimes_F F_\nu$ is a division algebra. If $\nu$ is an archimedian place and $H$ ramifies at $\nu$, we say that $H$ is \emph{definite} at that place. If $H$ is definite at all archimedian places, we say that it is \emph{totally definite}. Let $S$ be the set of ideals corresponding to non-archimedian places $\nu$ of $F$ such that $H$ ramifies at $\nu$. Then the \emph{discriminant} of $H$ is the ideal
    \begin{align*}
    \disc(H) = \prod_{\mathfrak{p} \in S} \mathfrak{p}.
    \end{align*}
\end{definition}

The discriminant of a quaternion algebra almost entirely classifies quaternion algebras up to isomorphism.

\begin{theorem}[See \cite{VoightBook}]\label{Discrimiant of Quaternion Algebras}
Let $H_1, H_2$ be two quaternion algebras over a local or global field $F$. The following are equivalent.
    \begin{enumerate}
        \item $H_1 \cong H_2$.
        \item $\disc(H_1) = \disc(H_2)$ and for every archimedian place $\nu$, $H_1$ is definite if and only if $H_2$ is definite.
    \end{enumerate}
\end{theorem}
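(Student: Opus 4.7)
The plan is to reduce the global isomorphism question to a collection of local isomorphism questions, show that the local data is binary (split vs. ramified) at each place, and then observe that the stated invariants (discriminant plus archimedean ramification data) package exactly this local information.

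First I would record the direction $(1) \Rightarrow (2)$, which is essentially a definitional check: an $F$-algebra isomorphism $H_1 \cong H_2$ induces $F_\nu$-algebra isomorphisms $(H_1)_\nu \cong (H_2)_\nu$ at every place $\nu$, so the two algebras ramify at exactly the same places, and in particular $\disc(H_1) = \disc(H_2)$ and their archimedean definiteness profiles agree. For $(2) \Rightarrow (1)$, the key ingredient is the local classification: over a non-archimedean local field $F_\nu$ there is, up to isomorphism, a unique quaternion division algebra (and otherwise $H_\nu \cong \Mat(2, F_\nu)$); over $\RR$ the only division quaternion algebra is the Hamiltonians; over $\CC$ every quaternion algebra splits. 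Thus the isomorphism class of $H_\nu$ is determined by the single bit ``ramified or split.'' I would cite these standard local facts (they follow from structure theory of central simple algebras over local fields, e.g. via the Hasse invariant landing in $\frac{1}{2}\ZZ/\ZZ$).

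Next I would invoke the Albert--Brauer--Hasse--Noether local-global principle: the natural map
\begin{align*}
\mathrm{Br}(F) \longrightarrow \bigoplus_\nu \mathrm{Br}(F_\nu)
\end{align*}
is injective on the subgroup generated by classes of quaternion algebras. In concrete terms, two quaternion algebras $H_1, H_2$ over $F$ are isomorphic if and only if $(H_1)_\nu \cong (H_2)_\nu$ for every place $\nu$ of $F$. Combining with the local classification, $H_1 \cong H_2$ iff $H_1$ and $H_2$ ramify at exactly the same set of places (archimedean and non-archimedean). The non-archimedean part of this set is precisely encoded by the discriminant ideal, by definition, and the archimedean part is encoded by the definiteness condition, so condition $(2)$ is exactly equivalent to same-ramification-everywhere, giving the result.

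The main obstacle is the invocation of the global principle: Albert--Brauer--Hasse--Noether is a genuinely deep class field theoretic statement, and any honest proof has to either appeal to it directly or develop enough of the theory of the Brauer group of a global field to recover it. In practice, since the paper cites \cite{VoightBook} for this theorem, I would simply quote that reference for the global injectivity and reduce the proof to the essentially elementary local classification together with the bookkeeping of how $\disc(H)$ records the finite ramification set.
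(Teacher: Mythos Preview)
Your sketch is correct and is precisely the standard argument: local classification (split versus the unique division algebra at each place) combined with the Albert--Brauer--Hasse--Noether local-global principle, with the discriminant and archimedean data recording the ramification set. Note, however, that the paper does not give its own proof of this theorem; it simply states it with a citation to \cite{VoightBook}, so there is nothing in the paper to compare against beyond that reference.
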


Note that a corollary of Theorem \ref{Discrimiant of Quaternion Algebras} is that all totally definite quaternion algebras over a global field $F$ with an orthogonal involution are uniquely characterized as $\ddagger$-rings by $\disc(H)$ and $\disc(\ddagger)$.

\section{Orders Closed Under Involution:}

We are finally ready to introduce rings with involution that occur as orders of quaternion algebras over global fields equipped with an orthogonal involution.

\begin{definition}[See \cite{Reiner2003}]
Let $R$ be a Dedekind domain with field of fractions $F$. Let $B$ be a finite-dimensional $F$-algebra. A subring $\OO$ of $B$ is an \emph{order} if it is a finitely-generated $R$-module and $R\OO = B$---that is, $\OO$ is also a lattice. A subring $\OO$ of $B$ is a \emph{maximal order} if it is not contained in any strictly larger order of $B$. If $B$ additionally has a standard involution, we define the \emph{discriminant} of $\OO$ to be the ideal $\disc(\OO)$ such that $\disc(\OO)^2$ is the ideal generated by the set
    \begin{align*}
    \left\{\det\left(\tr(\alpha_i \alpha_j))\right)_{1 \leq i,j \leq \dim(B)}\middle| \alpha_1, \alpha_2, \ldots \alpha_{\dim(B)} \in \OO\right\}.
    \end{align*}
\end{definition}

For our purposes, we shall always work with global fields $F$ and we shall take $R = \mathfrak{o}_F$, the ring of integers of $F$. Our $F$-algebra will always be a quaternion algebra $H$. In this context, it is a classic theorem that any order $\OO$ is maximal if and only if $\disc(\OO) = \disc(H)$ \cite{Reiner2003}. However, we are interested in rings with involution specifically.

\begin{definition}[See \cite{Scharlau1974}]
Let $R$ be a Dedekind domain with field of fractions $F$. Let $B$ be a finite-dimensional $F$-algebra together with an involution $\sigma$. A subring $\OO$ of $B$ is a $\sigma$-\emph{order} if it is an order of $B$ that is closed under $\sigma$. A $\sigma$-order is a \emph{maximal} $\sigma$-\emph{order} if it is not contained inside any strictly larger $\sigma$-order.
\end{definition}

Orders closed under involutions were originally studied by Scharlau in the 1970s \cite{Scharlau1974} in the context of central simple algebras, and generalized to Azumaya algebras by Saltman \cite{Saltman1978}. To the best of the author's knowledge, there is no known classification of maximal $\sigma$-orders in such broad contexts. However, in the special case of quaternion algebras, they can be characterized by their discriminant.

\begin{theorem}[Theorem 1.1 of \cite{Sheydvasser2017}]\label{Discriminants of Maximal Orders}
Let $H$ be a quaternion algebra over a local or global field $F$ with characteristic not $2$, with orthogonal involution $\ddagger$. If $\OO$ is a $\ddagger$-order of $H$, the following are equivalent.
    \begin{enumerate}
        \item $\OO$ is a maximal $\ddagger$-order.
        \item $\disc(\OO) = \disc(H) \cap \iota(\disc(\ddagger))$, where $\iota(\disc(\ddagger))$ is the ideal generated by $\disc(\ddagger) \cap \mathfrak{o}_F$.
        \item $\OO = \OO' \cap {\OO'}^\ddagger$ for some maximal order $\OO'$ and $\disc(\OO) = \disc(H) \cap \iota(\disc(\ddagger))$.
    \end{enumerate}
\end{theorem}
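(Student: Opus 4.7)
The plan is to reduce the statement to a purely local one and then to verify the three equivalences by an explicit computation in each possible local completion. The key observation is that all three conditions in the theorem are local: an order $\OO \subset H$ is a $\ddagger$-order (resp.\ a maximal $\ddagger$-order) if and only if each local completion $\OO_\mathfrak{p}$ is, because the involution is $F$-linear and extends place-by-place; the discriminants of $\OO$, of $H$, and the conductor $\iota(\disc(\ddagger))$ all factor as products of their local contributions; and an intersection $\OO' \cap {\OO'}^\ddagger$ commutes with localization. So I would first reduce to the case in which $F$ is a non-archimedian local field with characteristic not $2$, since the archimedian places contribute nothing to the discriminants.

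By Theorem \ref{Discrimiant of Quaternion Algebras}, over such a local $F$ there are only two quaternion algebras up to isomorphism: the split algebra $M_2(F)$ and the unique division quaternion algebra $D$. If $H = D$, then $H$ has a unique maximal order $\OO'$, namely the valuation ring of the reduced norm. Since $\ddagger$ is an involution of the first kind it preserves the reduced norm, so $\OO'$ is automatically $\ddagger$-stable, and hence is itself the unique maximal $\ddagger$-order and equal to $\OO' \cap {\OO'}^\ddagger$; the discriminant formula in (2) follows by directly comparing $\disc(\OO')$ with $\disc(D)$ and checking that $\iota(\disc(\ddagger))$ is absorbed by the (already maximal) local ramification index. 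This case is routine.

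The substantive case is $H = M_2(F)$, where I would use Theorem \ref{Classifying Involutions} to write $\ddagger = \text{Int}(u) \circ \bar{\phantom{z}}$ for some $u \in H^\times$ with $u^2 \in F^\times$, so that $\disc(\ddagger)$ is the class of $-u^2$. Split into two subcases by the valuation of $u^2$. If $u^2 \in \mathfrak{o}_F^\times$ (up to squares), I would choose a standard basis $1,i,j,k$ with $u = k$ so that $\ddagger$ fixes $1,i,j$ and negates $k$; then a suitable conjugate of the maximal order $M_2(\mathfrak{o}_F)$ is $\ddagger$-stable, so every maximal order is $\ddagger$-stable after conjugation, $\OO' \cap {\OO'}^\ddagger = \OO'$, and both $\disc(H)$ and $\iota(\disc(\ddagger))$ are trivial, matching (2). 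If instead $u^2$ has odd valuation, I would use the presentation to embed $H$ in $M_2(F)$ so that $u$ is skew with respect to two specific adjacent vertices of the Bruhat–Tits tree; the intersection $\OO' \cap {\OO'}^\ddagger$ is then forced to fix both of those vertices and therefore to be the Eichler order of level $(\pi)$ stabilizing the corresponding edge, whose discriminant equals $(\pi) = \iota(\disc(\ddagger))$. A short further argument, again using the tree, shows that no strictly larger $\ddagger$-stable order can exist, because the involution swaps any pair of vertices not lying on an axis fixed by $u$.

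Once the local statements are established, the global equivalences are obtained by gluing. From (1) I would produce a global maximal order $\OO'$ whose local completions at each place contain the corresponding $\OO_\mathfrak{p}$, which gives (3); from (3) the discriminant formula (2) follows from multiplicativity over places and the local computation; from (2), maximality is checked place-by-place by comparing local discriminants, yielding (1). The main obstacle I expect is the split local case with ramified involution: identifying $\OO' \cap {\OO'}^\ddagger$ as precisely an Eichler order of the claimed level, and ruling out larger $\ddagger$-orders, requires choosing the right model of $M_2(F)$ in which $u$ acts transparently, and is where all of the real arithmetic content of the theorem lives.
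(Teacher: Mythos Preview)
The paper does not prove this theorem: it is quoted verbatim as Theorem~1.1 of \cite{Sheydvasser2017} and no argument is given here. So there is no ``paper's own proof'' against which to compare your proposal.

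That said, your outline is the natural one and is broadly in line with how such results are proved (local-global reduction, then a case split on whether $H_\mathfrak{p}$ is division or split, and in the split case on the parity of the valuation of $\disc(\ddagger)$). A few places where the sketch is thinner than it looks:
\begin{itemize}
    \item In the split case with $\iota(\disc(\ddagger)) = (1)$ you assert that ``a suitable conjugate of $M_2(\mathfrak{o}_F)$ is $\ddagger$-stable, so every maximal order is $\ddagger$-stable after conjugation.'' This does not by itself show that every maximal $\ddagger$-order is a maximal order: you still need to argue that any $\ddagger$-stable order is contained in a $\ddagger$-stable maximal order, which is exactly the content of (1)$\Rightarrow$(3) at that prime. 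The cleanest route is through the tree: show that $\ddagger$ acts on the Bruhat--Tits tree as a type-preserving automorphism (because $u$ has even valuation), hence fixes vertices, and then any $\ddagger$-order lies in a fixed vertex.
    \item In the split, ramified-involution case you correctly identify the candidate $\OO' \cap {\OO'}^\ddagger$ as an Eichler order of level $(\pi)$, but the claim that ``no strictly larger $\ddagger$-stable order can exist'' needs more than noting that $\ddagger$ swaps vertices off an axis: you must show $\ddagger$ acts as an inversion (type-reversing, because $u$ has odd valuation) and hence fixes no vertex, so edge-stabilizers are the best one can do.
    \item You do not separate the dyadic primes. The hypothesis is $\mathrm{char}\,F \neq 2$, not residue characteristic $\neq 2$, so places above $2$ are in play; there $F_\mathfrak{p}^\times/(F_\mathfrak{p}^\times)^2$ is larger and the ``even/odd valuation of $u^2$'' dichotomy is not exhaustive. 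The cited paper handles these separately, and your sketch would need to as well.
\end{itemize}
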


Additional, stronger, results over local fields were also established by the author \cite{Sheydvasser2017}, but they will mostly be irrelevant for our purposes. We will end this section by demonstrating the relationship between an order being $\ddagger$-maximal and $\ddagger$-Euclidean.

\begin{theorem}\label{Euclidean Means Almost Maximal}
Let $H$ be a quaternion algebra over a global field $F$ with orthogonal involution $\ddagger$. Let $\OO$ be a $\ddagger$-order---it is a $\ddagger$-Euclidean order if and only if there exists a maximal $\ddagger$-order $\OO' \supset \OO$ which is a $\ddagger$-Euclidean order and ${\OO'}^+ = \OO^+$.
\end{theorem}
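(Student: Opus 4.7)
The $(\Leftarrow)$ direction is straightforward. Given $\OO' \supseteq \OO$ with ${\OO'}^+ = \OO^+$ and a stathm $\Phi$ witnessing that $\OO'$ is $\ddagger$-Euclidean, the restriction $\Phi|_\OO$ serves as a stathm for $\OO$: if $a,b \in \OO$ satisfy $b \neq 0$ and $a\ddagger(b) \in \OO^+$, then $a,b \in \OO'$ and $a\ddagger(b) \in {\OO'}^+$, so the Euclidean property of $\OO'$ yields $q \in {\OO'}^+ = \OO^+ \subseteq \OO$ with $\Phi(a - bq) < \Phi(b)$, and $a - bq$ automatically lies in $\OO$.

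For the $(\Rightarrow)$ direction, suppose $\OO$ is $\ddagger$-Euclidean with stathm $\Phi$. The plan breaks into two parts: (i) exhibit a $\ddagger$-maximal order $\OO' \supseteq \OO$ with ${\OO'}^+ = \OO^+$, and (ii) extend the stathm to verify that $\OO'$ is $\ddagger$-Euclidean.

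For (ii), once (i) is in hand the argument is clean. Since $\OO'/\OO$ is a finitely generated torsion $\mathfrak{o}_F$-module, there exists a nonzero $c \in \mathfrak{o}_F$ with $c\OO' \subseteq \OO$. Define $\Phi': \OO' \setminus \{0\} \to W$ by $\Phi'(x) := \Phi(cx)$. Given $a,b \in \OO'$ with $b \neq 0$ and $a\ddagger(b) \in \OO^+$, the pair $(ca, cb)$ lies in $\OO^2$ and satisfies $(ca)\ddagger(cb) = c^2 a\ddagger(b) \in \OO^+$ (using that $c \in F$ is fixed by $\ddagger$ and central), so the $\ddagger$-Euclidean property of $\OO$ produces $q \in \OO^+$ with $\Phi(ca - cbq) = \Phi(c(a - bq)) < \Phi(cb)$; that is, $\Phi'(a - bq) < \Phi'(b)$, so $\Phi'$ is a stathm for $\OO'$.

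For (i), I would begin with the natural candidate: let $\OO^{\max}$ be a maximal order of $H$ containing $\OO$ (by Zorn), and set $\OO' := \OO^{\max} \cap (\OO^{\max})^\ddagger$, which is $\ddagger$-maximal by part (3) of Theorem \ref{Discriminants of Maximal Orders}. The crux is proving ${\OO'}^+ = \OO^+$. Any $\alpha \in {\OO'}^+$ admits a nonzero $c \in \mathfrak{o}_F$ with $c\alpha \in \OO \cap H^+ = \OO^+$ (since $\OO'/\OO$ is torsion), and the task is to conclude $\alpha \in \OO^+$. My strategy is to apply Algorithm \ref{Euclidean Algorithm} in $\OO$ to $(c\alpha, c)$, whose hypothesis $(c\alpha)\ddagger(c) = c^2\alpha \in \OO^+$ is automatic, and to exploit the explicit sequences $s_i, t_i$ produced by the algorithm: the identity $c\alpha \cdot \ddagger(s_k) - c \cdot \ddagger(t_k) = r_k$ together with the principality $r_k \OO = (c\alpha)\OO + c\OO$ should force $\alpha$ into $\OO$ once one invokes the $\ddagger$-closure of $\OO^+$ and a non-commutative cancellation argument in $H$. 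This is the main obstacle, since the naive claim that $\OO^+$ is saturated in $H^+$ is false in general (a half-integer lies in $H^+$ and has a multiple in every order, yet is in no order at all), so the argument must subtly use both that $\alpha$ already lies in \emph{some} order $\OO^{\max}$ and that the Euclidean algorithm yields a Bézout-style identity in $\OO^+ \subseteq \OO$.
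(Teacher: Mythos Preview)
Your $(\Leftarrow)$ direction and part (ii) of $(\Rightarrow)$ are correct and essentially identical to the paper's arguments. The genuine gap is in part (i), and you have correctly identified it yourself: running Algorithm~\ref{Euclidean Algorithm} on $(c\alpha,c)$ produces a right GCD $r_k$ in $\OO$ and a B\'ezout relation $c\alpha\,\sigma(s_k) - c\,\sigma(t_k) = r_k$, but there is no evident way to cancel $c$ and conclude $\alpha \in \OO$. The obstruction you name (that $\OO^+$ is not saturated in $H^+$) is exactly the point, and nothing in your sketch explains how the extra hypothesis ``$\alpha$ lies in some order'' enters the B\'ezout data. As written, the argument is incomplete.

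The paper's route for (i) is quite different and avoids the algorithm entirely. Fix any $\ddagger$-maximal order $\OO' \supset \OO$ and consider the conductor-type set
\[
S = \bigl\{\, x \in \OO \setminus \{0\} \;:\; x\OO' \subset \OO \,\bigr\},
\]
which is nonempty since it contains any nonzero $c \in \mathfrak{o}_F$ with $c\OO' \subset \OO$. Choose $b \in S$ with $\Phi(b)$ minimal. Now take any $q' \in {\OO'}^+$: then $bq' \in \OO$ and $(bq')b^\ddagger = bq'b^\ddagger \in \OO \cap H^+ = \OO^+$, so a \emph{single} application of the $\ddagger$-Euclidean property of $\OO$ to the pair $(bq',b)$ yields $q \in \OO^+$ with $\Phi(bq'-bq) < \Phi(b)$. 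But $b(q'-q)\OO' \subset b\OO' \subset \OO$, so $b(q'-q) \in S \cup \{0\}$, and minimality of $\Phi(b)$ forces $b(q'-q)=0$, hence $q' = q \in \OO^+$. This gives ${\OO'}^+ \subset \OO^+$; the reverse inclusion is trivial. The key idea you are missing is this extremal argument on the conductor, which sidesteps any attempt to divide by $c$.

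A minor side remark: your candidate $\OO' := \OO^{\max} \cap (\OO^{\max})^\ddagger$ is not automatically $\ddagger$-maximal from Theorem~\ref{Discriminants of Maximal Orders}(3) as stated, since that condition also requires the discriminant identity. The paper avoids this by simply taking $\OO'$ to be any $\ddagger$-maximal order containing $\OO$, whose existence follows from Zorn's lemma.
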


\begin{remark}
This result shows a clear distinction between $\ddagger$-Euclidean orders and Euclidean orders, as it is known that Euclidean orders have to be maximal---see \cite{CerriChabertPierre2013}.
\end{remark}

\begin{proof}
First, we show that if $\OO'$ is a maximal $\ddagger$-order that is also $\ddagger$-Euclidean, then any other $\ddagger$-order $\OO$ such that ${\OO'}^+ = \OO^+$ will also be $\ddagger$-Euclidean. Indeed, if $a,b \in \OO$ such that $b \neq 0$ and $ab^\ddagger \in \OO^+$, then since $\OO'$ is $\ddagger$-Euclidean, we know that there exists $q \in {\OO'}^+$ such that $\Phi(a - bq) < \Phi(b)$, where $\Phi$ is the stathm of $\OO'$. However, since ${\OO'}^+ = \OO^+$, we have thus proved that $\Phi$ is a stathm for $\OO$.

Next, suppose that $\OO$ is a $\ddagger$-Euclidean order and $\OO \subset \OO'$, where $\OO'$ is a $\ddagger$-maximal order---we wish to prove that $\OO^+ = {\OO'}^+$. First, we define
    \begin{align*}
    S = \left\{x \in \OO \backslash \{0\}\middle| x \OO' \subset \OO \right\},
    \end{align*}
    
\noindent which we note is non-empty, and therefore has an element $b$ such that $\Phi(b) \leq \Phi(x)$ for all $x \in S$. Now, for any element $q' \in {\OO'}^+$, we know that $bq' \in \OO$, and certainly $bq'b^\ddagger \in H^+$, hence we can use the fact that $\OO$ is a $\ddagger$-Euclidean order to conclude that there exists $q \in \OO^+$ such that $\Phi(bq' - bq) < \Phi(b)$. We note that clearly $(bq' - bq)\OO' \subset b\OO' \subset \OO$, hence $bq' - bq \in S \cup \{0\}$. However, by the minimality of $b$, it must be that $bq' - bq = 0$, whence $q' = q$. We thus conclude that ${\OO'}^+ = \OO^+$.

Finally, we wish to prove that if $\OO$ is $\ddagger$-Euclidean, then so is $\OO'$. Indeed, for some $k \in \mathfrak{o}_F \backslash \{0\}$, $k \OO' \subset \OO$, so we may define a function $\Phi'(x) = \Phi(kx)$. Given any $a,b \in \OO'$ such that $b \neq 0$ and $ab^\ddagger \in {\OO'}^+$, note that we can find $q \in \OO^+$ such that $\Phi(ka - kbq) < \Phi(kb)$. By definition, this means $\Phi'(a - bq) < \Phi'(b)$, hence $\Phi'$ is a stathm for $\OO'$.
\end{proof}

\section{One-Sided Ideals of Maximal Orders with Involution:}

We can now begin describing what makes maximal $\ddagger$-orders special in the context of Euclidean $\ddagger$-rings: their one-sided ideals have generators with good properties. Let $\OO$ be a maximal $\ddagger$-order $\OO$ of a quaternion algebra $H$ with orthogonal involution $\ddagger$ over a local or global field $K$. By Theorem \ref{Discriminants of Maximal Orders}, $\OO$ is hereditary, and therefore every nonzero right ideal of $\OO$ is invertible \cite{VoightBook}. Given two invertible right fractional $\OO$-ideals $I,J$, we write $I \sim J$ if and only if $\alpha I = J$ for some $\alpha \in H^\times$. We then define the right class set $\text{Cls}_R(\OO)$ as the collection of equivalence classes, i.e.
    \begin{align*}
    \text{Cls}_R(\OO) &= \left\{\text{invertible right fractional } \OO\text{-ideals}\right\}/\sim.
    \end{align*}
    
\noindent It is well-known that $\text{Cls}_R(\OO)$ is always finite---in fact, every ideal class can be represented by an integral $\OO$-ideal of bounded norm \cite{VoightBook}. Furthermore, it is known that every right ideal can always be generated by at most two elements---however, we shall need to know that we can take these generators to be of a special form. We shall first show that this is possible over almost all local fields.

\begin{lemma}\label{Local generation of ideals}
Let $H$ be a quaternion algebra over a local field $F$ with ring of integers $\mathfrak{o}_F$ such that its characteristic is not $2$ and $2 \notin \mathfrak{o}_F^\times$. Let $\ddagger$ be an orthogonal involution on $H$, let $\OO$ be a maximal $\ddagger$-order of $H$, and $I \subset \OO$ an invertible right $\OO$-ideal. If $\disc(H) = \mathfrak{p}$ or $\iota(\disc(\ddagger)) = (1)$, then there exists $x \in \OO^+$ such that $I = x \OO$.
\end{lemma}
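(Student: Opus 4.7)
The plan is to first reduce to the case where $\OO$ is a maximal order, then invoke principality of right ideals of maximal orders of quaternion algebras over local fields, and finally adjust the generator so that it lies in $\OO^+$. First I would observe that under either hypothesis, $\OO$ is a maximal order of $H$. If $\iota(\disc(\ddagger)) = (1)$, Theorem \ref{Discriminants of Maximal Orders} gives $\disc(\OO) = \disc(H) \cap (1) = \disc(H)$, which characterizes maximal orders. If $\disc(H) = \mathfrak{p}$, then $H$ is a division algebra over $F$ with a unique maximal order $\OO_H$, which is automatically $\ddagger$-stable; Theorem \ref{Discriminants of Maximal Orders}(iii) then forces $\OO = \OO_H \cap \OO_H^\ddagger = \OO_H$. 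Over a local field every invertible right ideal of a maximal order of a quaternion algebra is principal, so we may write $I = y\OO$ for some $y \in \OO$, and the task reduces to finding $u \in \OO^\times$ with $yu \in \OO^+$.

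I would then split into cases based on whether $H$ is ramified or split. When $H$ is ramified, $\OO$ is a non-commutative local ring whose invertible right ideals are the two-sided powers $\pi_H^n \OO$; two elements generate the same right ideal if and only if they have the same $v_H$-valuation. Since $\pi \in F \subset \OO^+$ has $v_H$-valuation $2$, it suffices to exhibit a single element of $\OO^+$ of valuation $1$, which is equivalent to showing that the ternary quadratic form $\nrm|_{H^+}$ represents some $c \in \pi\mathfrak{o}_F^\times$. Using the orthogonal decomposition $\nrm|_H = \nrm|_{H^+} \oplus \langle -\disc(\ddagger) \rangle$, Witt cancellation, and the uniqueness of the anisotropic $4$-dimensional form over $F$ (namely $\nrm|_H$), the form $\nrm|_{H^+} \oplus \langle -c \rangle$ is anisotropic if and only if $c \equiv \disc(\ddagger) \pmod{(F^\times)^2}$; since $|\mathfrak{o}_F^\times/(\mathfrak{o}_F^\times)^2| \geq 2$ when $2 \in \mathfrak{p}$, we may choose $c \in \pi\mathfrak{o}_F^\times$ in a different square class, and the corresponding isotropic form yields an element of $H^+$ of the required valuation, which automatically lies in $\OO_H \cap H^+ = \OO^+$.

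When $H$ is split the hypothesis $\iota(\disc(\ddagger))=(1)$ must hold, and I would instead use Smith normal form. Conjugating appropriately, one may take $\OO = \Mat(2,\mathfrak{o}_F)$, and the unit-discriminant condition lets us arrange $\ddagger$ so that $M^\ddagger$ swaps the diagonal entries of $M$. Writing $y = u_1 \mathrm{diag}(\pi^a,\pi^b) u_2$ with $u_i \in GL_2(\mathfrak{o}_F)$ and $a \leq b$, the task becomes constructing $v \in GL_2(\mathfrak{o}_F)$ so that $u_1 \mathrm{diag}(\pi^a,\pi^b) v$ has equal diagonal entries; this reduces to solving a single linear equation over $\mathfrak{o}_F$ with a unit-determinant constraint, which can always be handled by an explicit case split on which entries of $u_1$ are units. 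I expect the quadratic-form step in the ramified case to be the main obstacle, requiring the classification of $4$-dimensional quadratic forms over $F$ and careful handling of the square-class structure of $\mathfrak{o}_F^\times$ in residue characteristic $2$.
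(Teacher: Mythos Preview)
Your overall strategy matches the paper's: establish that $\OO$ is a maximal order (so $I$ is principal), then adjust the generator by a unit to land in $\OO^+$, splitting into the ramified and split cases. The execution differs in both cases, and there is one inaccuracy in the split case.

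In the ramified case, the paper takes a shorter route than your Witt-cancellation argument: it reduces the ternary form $\nrm|_{\OO^+}$ modulo $\mathfrak{p}$ and applies the Chevalley--Warning theorem directly over the residue field $\Bbbk = \mathfrak{o}_F/\mathfrak{p}$, producing an $x \in \OO^+ \setminus \mathfrak{p}\OO$ with $\nrm(x) \in \mathfrak{p}$; since $\mathfrak{P}^2 = \mathfrak{p}\OO$, this $x$ has $v_H$-valuation exactly $1$ and hence generates $\mathfrak{P}$. This sidesteps the classification of four-dimensional forms over $F$ and the square-class bookkeeping altogether. Your argument is also correct and in fact yields finer information (precisely which square classes of odd valuation are represented by $\nrm|_{H^+}$), but the paper's approach is more elementary.

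In the split case, your claim that $\ddagger$ can be normalized to the ``swap diagonals'' involution is not quite right: that particular involution has $\disc(\ddagger)=1$ in $F^\times/(F^\times)^2$, whereas the hypothesis $\iota(\disc(\ddagger))=(1)$ only says $\disc(\ddagger)$ lies in a \emph{unit} square class, which need not be the trivial one. Fortunately this does not actually break your argument, since the subsequent step---solving a single $\mathfrak{o}_F$-linear equation subject to a unit-determinant constraint---only uses that $\OO^-$ has rank one, not the specific shape of $\ddagger$. The paper carries out exactly this step, but more directly and without the Smith-normal-form detour: it observes that $u \mapsto (xu)^\ddagger - xu$ is an $\mathfrak{o}_F$-linear map into the rank-one module $\OO^-$, so the kernel condition on $u=\left(\begin{smallmatrix} a & b \\ c & d \end{smallmatrix}\right)$ reads $Aa+Bb+Cc+Dd=0$; after scaling so that $A=1$, the element $u=\left(\begin{smallmatrix} C-B & 1 \\ -1 & 0 \end{smallmatrix}\right)$ lies in the kernel and has determinant $1$.
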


\begin{proof}
Since $F$ is a local field, $\mathfrak{o}_F$ is a DVR, and therefore $I$ is principal \cite{VoightBook}---we only need to prove that the element it is generated by is fixed by $\ddagger$. First, suppose that $H$ is a division algebra---i.e. $\disc(H) = \mathfrak{p}$, where $\mathfrak{p}$ is the unique maximal ideal of $\mathfrak{o}_F$. Then there is a unique maximal right $\OO$-ideal
    \begin{align*}
    \mathfrak{P} = \left\{x \in \OO \middle| \nrm(x) \in \mathfrak{o}_F\right\},
    \end{align*}
    
\noindent which is, in fact, a two-sided ideal. Every other right ideal is of the form $\mathfrak{P}^n$, so if we can show that $\mathfrak{P}$ is generated by some element $x \in \OO^+$, we will have taken care of the division algebra case. Note that $\mathfrak{o}_F/\mathfrak{p} = \Bbbk$ is a finite field, and consider the quadratic form
    \begin{align*}
    q: \left(\OO^+\right) \otimes_{\mathfrak{o}_F} \Bbbk &\rightarrow \Bbbk \\
    z \otimes k &\mapsto k^2\nrm(z).
    \end{align*}
    
\noindent This is a ternary quadratic form, and since $\Bbbk$ is a finite field, it must be isotropic by the Chevalley-Warning theorem---that is, there exists $x \in \OO^+$ such that $x \notin \mathfrak{p}\OO$ and $\nrm(x) \in \mathfrak{p}$. However, $\mathfrak{P}^2 = \mathfrak{p}\OO$, so if $x \notin \mathfrak{p}\OO$, then $\nrm(x) \notin \mathfrak{p}^2$. Therefore, $x \in \mathfrak{P}/\mathfrak{P}^2$, and so we conclude that $\mathfrak{P} = x \OO$.

If $H$ is not a division algebra, then $H \cong \Mat(2,F)$. Let $I = x\OO$ for some $x \in \OO$. We want to prove that there exists $u \in \OO^\times$ such that $x':= xu \in \OO^+$. Since $\iota(\disc(\ddagger)) = (1)$, as a ring, $\OO \cong \Mat(2,\mathfrak{o}_F)$ and $\OO^\times \cong GL(2,\mathfrak{o})$. We are looking for
    \begin{align*}
    u = \begin{pmatrix} a & b \\ c & d \end{pmatrix} \in GL(2,\mathfrak{o})
    \end{align*}
    
\noindent such that $(xu)^\ddagger - xu = 0$. Note that
    \begin{align*}
    \phi: \Mat(2,F) &\rightarrow H^- \\
    u &\mapsto (xu)^\ddagger - xu
    \end{align*}
    
\noindent is a linear map from a $4$-dimensional vector space to a $1$-dimensional vector space---this implies that there are some constants $A,B,C,D \in \OO$ such that $(xu)^\ddagger - xu = 0$ if and only if $Aa + Bb + Cc + Dd = 0$. Without loss of generality, we may assume that one of $A,B,C,D \in \mathfrak{o}_F^\times$; indeed, we may assume that $A = 1$. But then it is clear that
    \begin{align*}
    u = \begin{pmatrix} C - B & 1 \\ -1 & 0 \end{pmatrix} \in GL(2,\mathfrak{o}_F)
    \end{align*}
    
\noindent and satisfies the desired condition.
\end{proof}

With this out of the way, we can get the desired result for global fields.

\begin{theorem}\label{Right Ideals Are Nice}
Let $H$ be a quaternion algebra over a global field $F$ with characteristic not $2$. Let $\ddagger$ be an orthogonal involution on $H$, let $\OO$ be a $\ddagger$-order of $H$, and let $I$ be an invertible right ideal of $\OO$. Then $I = x\OO + y\OO$ for some $x,y \in \OO$ such that $xy^\ddagger \in \OO^+$.
\end{theorem}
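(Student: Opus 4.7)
The plan is a local-to-global argument combining Lemma \ref{Local generation of ideals} with weak approximation. If $I$ is principal, say $I = z\OO$, take $x = z$ and $y = 0$, so $xy^\ddagger = 0 \in \OO^+$ and we are done; so assume $I$ is non-principal. Let $S$ be the finite set of primes $\mathfrak{p}$ of $\mathfrak{o}_F$ at which $\OO_\mathfrak{p}$ is not maximal, or at which $H_\mathfrak{p}$ is split and $\iota(\disc(\ddagger))_\mathfrak{p} \neq (1)$. For every $\mathfrak{p} \notin S$, Lemma \ref{Local generation of ideals} provides a generator $x_\mathfrak{p} \in \OO_\mathfrak{p}^+$ of the principal ideal $I_\mathfrak{p}$.

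I would use weak approximation in the $3$-dimensional $F$-vector space $H^+$, on its $\mathfrak{o}_F$-lattice $\OO^+$, to construct $x \in \OO^+$ close to $x_\mathfrak{p}$ at each $\mathfrak{p} \notin S$ (so that $x\OO_\mathfrak{p} = I_\mathfrak{p}$, since $x$ and $x_\mathfrak{p}$ then differ by a unit in $\OO_\mathfrak{p}^\times$) and close to some fixed element of $I_\mathfrak{p}$ at each $\mathfrak{p} \in S$ (so that $x \in I$). The quotient $I/x\OO$ is then an $\OO$-module supported only at primes in $S$ and is locally cyclic; a further application of weak approximation produces $y \in I$ with $I = x\OO + y\OO$.

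Finally, I need to arrange $xy^\ddagger \in \OO^+$, equivalently $xy^\ddagger - yx^\ddagger = 0$. Replacing $y$ by $y + xv$ for $v \in \OO$ preserves the generating pair and modifies $xy^\ddagger - yx^\ddagger$ by $x(v - v^\ddagger)x^\ddagger$; since $H^-$ is one-dimensional over $F$ and $w \mapsto xwx^\ddagger$ acts on $H^-$ as multiplication by a scalar in $F$, the local obstruction at each $\mathfrak{p} \in S$ reduces to a single $F_\mathfrak{p}$-linear equation in the four-dimensional $\mathfrak{o}_\mathfrak{p}$-module $\OO_\mathfrak{p}$. Solving locally by the same style of linear-algebra argument used in the split-$H$ half of Lemma \ref{Local generation of ideals}, and patching via a final round of weak approximation, yields the desired generators.

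The main obstacle will be this last step. One must verify that the image of $v \mapsto v - v^\ddagger$ from $\OO_\mathfrak{p}$ to $\OO_\mathfrak{p}^-$ is large enough (at each bad prime, including those above $2$) to absorb the obstruction under the twist by $x \cdot (-) \cdot x^\ddagger$, and one must choreograph the precisions of the successive approximations so that integrality, membership in $I$, local generation of $I_\mathfrak{p}$ at good primes, and the symmetry condition $xy^\ddagger \in \OO^+$ are all simultaneously attained. This is essentially a careful compatibility check analogous to the construction of $u$ in the proof of the lemma, lifted from a single local field to a finite product of completions.
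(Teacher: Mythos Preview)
Your approach diverges from the paper's in two places, and both contain gaps.

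First, in your construction of $x$: weak approximation (which for a lattice in a vector space is just the Chinese Remainder Theorem) controls only finitely many primes at once, yet you ask for $x\OO_\mathfrak{p}=I_\mathfrak{p}$ at every $\mathfrak{p}\notin S$. Even if you restrict to the finitely many primes where $I_\mathfrak{p}\neq\OO_\mathfrak{p}$, the $x$ you produce may pick up extra valuation at unrelated primes, so $I/x\OO$ will not be supported on $S$. This is repairable (any nonzero $x\in I\cap\OO^+$ will do, and such elements exist since $I\supset n\OO$ for some $n\in\mathfrak{o}_F$), but then the support of $I/x\OO$ is unrelated to $S$, and you lose the structural control you were counting on for the last step.

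Second, and more seriously, the adjustment $y\mapsto y+xv$ cannot in general kill the obstruction $\omega=xy^\ddagger-yx^\ddagger\in\OO^-$. Since $x\in H^+$ and $H^-$ is one-dimensional, one checks that $x\xi x=\pm\nrm(x)\,\xi$ for any $\xi\in H^-$; hence the adjustment changes $\omega$ by an element of $\nrm(x)\cdot\{v-v^\ddagger:v\in\OO\}$. There is no reason for $\omega$ to lie in this submodule: at any prime dividing $\nrm(x)$---and there must be such primes unless $x$ is a unit, in which case $I$ was already principal---the factor $\nrm(x)$ blocks surjectivity. The analogy with the split case of Lemma~\ref{Local generation of ideals} fails precisely because there the twist was by a \emph{unit} $u\in\OO_\mathfrak{p}^\times$, whereas here the conjugation by $x$ introduces the non-unit scalar $\nrm(x)$.

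The paper sidesteps both issues by a different maneuver. It first replaces $I$ by $J=zI$ for a suitable $z\in\OO$, arranged so that at every bad prime $J_\mathfrak{p}$ is generated by a scalar $t\in\mathfrak{o}_F$; now $J_\mathfrak{p}$ has a generator in $\OO_\mathfrak{p}^+$ at \emph{all} primes simultaneously. Then, starting from any presentation $J=x\OO+y\OO$, it invokes \emph{strong} approximation for the simply-connected group $SL(2,H)$ (the spin group of an indefinite form) to find a single global $\gamma\in SL(2,\OO)$ carrying $(x,y)$ to $(x',*)$ with $x'\in H^+$ and $*\equiv 0\pmod t$. The resulting generators are $x'\in\OO^+$ and $t\in\mathfrak{o}_F$, for which $x't^\ddagger=x't\in\OO^+$ holds automatically---no post-hoc adjustment is required.
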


\begin{proof}
Let $\mathfrak{o}_F$ be the ring of integers of $F$. For every prime ideal $\mathfrak{p}$, we know that $\mathfrak{o}_{F,\mathfrak{p}}$ is a local ring and that $I_\mathfrak{p}$ is a principal ideal---recall that a lattice is invertible if and only if it is locally principal \cite{VoightBook}. Next, note that there are only finitely many prime ideals $\mathfrak{p}$ such that $\OO_\mathfrak{p}$ is not a maximal order---call this set of ideals $S$. Therefore, we can find an element $z \in \OO \cap H^\times$ such that $zI_\mathfrak{p} = (t)$ for some $t \in \mathfrak{o}_F$ for every prime ideal in $S$. With this in mind, consider the ideal $J = zI$. Since $xy^\ddagger \in \OO^+$ if and only if $\overline{x}y \in \OO^+$, it is clear that $I$ is generated by two elements $x,y \in \OO$ such that $xy^\ddagger \in \OO^+$ if and only if $J$ is. On the other hand, we know that for every prime ideal $\mathfrak{p}$, $J_\mathfrak{p}$ is generated by an element $x_\mathfrak{p} \in \OO_\mathfrak{p}^+$---this follows from Lemma \ref{Local generation of ideals} if $\mathfrak{p} \notin S$, and from the fact that $J_\mathfrak{p} = (t)$ if $\mathfrak{p} \in S$. So, if $J = x\OO + y\OO$, then for every prime ideal $\mathfrak{p}$, there exists an element $\gamma_\mathfrak{p} \in GL(2,\OO_\mathfrak{p})$ such that $(x_\mathfrak{p}, 0) = \gamma (x,y)$. In fact, it is easy to see that we can take $\gamma_\mathfrak{p} \in SL(2,\OO_\mathfrak{p})$ simply by post-multiplying by an element of the form
    \begin{align*}
    \begin{pmatrix} 1 & 0 \\ 0 & u \end{pmatrix}.
    \end{align*}
    
\noindent However, $SL(2,H)$ is the spin group of an indefinite quadratic form, and as such it is a simply-connected, absolutely almost simple algebraic group, hence we can apply the strong approximation theorem to it \cite{Kneser1965, Platonov1969}. Specifically, we can conclude that there exist elements $\gamma \in SL(2,\OO)$ that are arbitrarily close to $(\gamma_\mathfrak{p})_\mathfrak{p}$ in the $\mathfrak{a}$-adic topology. Choose a principal ideal $(t) \supset J$ with $t \in \mathfrak{o}_F$---by the above, we know that there exists $\gamma \in SL(2,\OO)$ such that $\gamma(x,y) = (x',0) \mod (t)$ for some $x' \in \OO \cap H^+$---since $(t) \supset J$, we can assume that $x' \in J$. But this is just to say that for every element $w \in J$, $w \in x'\OO + t\OO$.
\end{proof}

\begin{corollary}\label{Class Number 1 Corollary}
Let $H$ be a quaternion algebra over a global field $F$ with characteristic not $2$. Let $\ddagger$ be an orthogonal involution on $H$, and let $\OO$ be a $\ddagger$-order of $H$. If $\OO$ is $\ddagger$-Euclidean, then $\OO$ has right class number $1$.
\end{corollary}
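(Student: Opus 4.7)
The plan is to assemble the corollary from two ingredients already in hand: Theorem~\ref{Right Ideals Are Nice} guarantees that every invertible right ideal of the $\ddagger$-order $\OO$ admits a pair of generators $x,y$ with $xy^\ddagger \in \OO^+$, while Corollary~\ref{Almost Principal Ring} guarantees that in any $\sigma$-Euclidean ring an invertible right ideal generated by such a pair is principal. Stringing these two facts together immediately shows that every invertible \emph{integral} right $\OO$-ideal is principal.

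To upgrade this into a statement about the full class set $\text{Cls}_R(\OO)$, I would use the standard observation that every invertible fractional right $\OO$-ideal $I$ is equivalent, under left-multiplication by a nonzero scalar, to an integral one: choose any nonzero $t \in \mathfrak{o}_F$ with $tI \subseteq \OO$, and note that scaling preserves invertibility and the right $\OO$-module structure. Combined with the previous paragraph, every class in $\text{Cls}_R(\OO)$ is represented by a principal right ideal $g\OO$, and all principal right ideals lie in the same class, so $|\text{Cls}_R(\OO)| = 1$.

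I do not anticipate any real obstacle here: all the arithmetic heavy lifting (the local analysis of Lemma~\ref{Local generation of ideals} and the appeal to strong approximation for $SL(2,H)$) has already been spent inside the proof of Theorem~\ref{Right Ideals Are Nice}, and all of the Euclidean-algorithm mechanics has already been spent inside Corollary~\ref{Almost Principal Ring}. If anything needs care, it is only the routine Dedekind-domain bookkeeping ensuring that every right ideal class admits an integral representative, which is completely standard for orders in a central simple algebra over a global field.
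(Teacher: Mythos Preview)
Your proposal is correct and follows essentially the same route as the paper: invoke Theorem~\ref{Right Ideals Are Nice} to obtain generators $x,y$ with $xy^\ddagger \in \OO^+$, then apply Corollary~\ref{Almost Principal Ring}. The only difference is that you spell out the passage from fractional to integral ideals explicitly, whereas the paper's one-line proof silently absorbs that step.
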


\begin{proof}
The proof is immediate---if $I$ is a right fractional ideal of $\OO$, then by Theorem \ref{Right Ideals Are Nice}, $I = x \OO + y \OO$ for some $x,y \in \OO$ such that $xy^\ddagger \in \OO^+$. We can then apply Corollary \ref{Almost Principal Ring}.
\end{proof}

We note that Theorem \ref{Class Number 1 Theorem} is just a special case of Corollary \ref{Class Number 1 Corollary}.

\section{Classification over the Rationals:}

The fact that all Euclidean $\ddagger$-rings that arise as orders of quaternion algebras over global fields have class number $1$ has a nice consequence: we can enumerate all of them if we restrict to totally definite quaternion algebras. In particular, for rational, definite quaternion algebras, we can utilize a theorem of Brzezinski, which we briefly paraphrase below.

\begin{theorem}[\cite{Brzezinski1995, Brzezinski1998}]\label{Brzezinski's Theorem}
Let $\OO$ be an order of a rational, definite quaternion algebra $H$---then $\OO$ has right class number $1$ if and only if the integral quadratic form
    \begin{align*}
    \OO \cap H^0 &\rightarrow \ZZ \\
    z & \mapsto \nrm(z)
    \end{align*}
    
\noindent is equivalent to one of the integral quadratic forms enumerated in Table \ref{Brzezinski Table}.
\end{theorem}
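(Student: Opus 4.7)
The plan is to leverage the classical Brandt correspondence between orders in definite rational quaternion algebras and positive-definite ternary quadratic forms over $\ZZ$. To any order $\OO \subset H$ one attaches the ternary lattice $L_\OO = \OO \cap H^0$ with the quadratic form $\nrm\vert_{L_\OO}$. A theorem of Brandt (refined by Eichler, Peters, and Brzezinski to cover non-maximal, non-Gorenstein orders) asserts that this assignment induces a bijection between conjugacy classes of orders in definite rational quaternion algebras and isometry classes of primitive positive-definite ternary $\ZZ$-lattices, in which the discriminant of $\OO$ and the discriminant of the form match up to a fixed power of $2$. So the first step is to set up this correspondence carefully, checking that it respects the local invariants of $\OO$ at every prime.

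Second, I would translate the arithmetic invariant ``right class number $1$'' into a quadratic-form invariant. Using the Brandt matrix machinery, the right class set $\text{Cls}_R(\OO)$ is in bijection with the set of proper isometry classes in the genus of $(L_\OO, \nrm)$ (after accounting for the two-sided ideal contribution, which for definite rational $H$ is well understood). Hence $h(\OO) = 1$ if and only if the genus of the ternary form $\nrm\vert_{L_\OO}$ consists of a single class.

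Third, I would finiteness-bound the problem via Siegel's mass formula applied to ternary genera. Since every class contributes at least $1/|\text{Aut}|$ to the mass and automorphism groups of ternary lattices are uniformly bounded, a single-class genus forces the mass, and hence the discriminant, to lie below an explicit constant. Within that bounded range I would enumerate positive-definite ternary lattices by Minkowski reduction, compute the genus of each candidate (using local invariants at the primes dividing the discriminant), retain only those with class number $1$, and verify that the surviving list matches Table \ref{Brzezinski Table}.

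The main obstacle is the third step: because $\OO$ need not be maximal, hereditary, or even Gorenstein, the local Brandt/Eichler invariants at the ramified and non-maximal primes vary in a non-uniform way, and the local mass contributions must be worked out case by case. This is precisely where Brzezinski's technical innovations in \cite{Brzezinski1995, Brzezinski1998} -- his classification of quaternion orders by their ternary type -- do the heavy lifting, and any self-contained proof would essentially have to reconstruct that classification before the mass-formula enumeration becomes tractable.
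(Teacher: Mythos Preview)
The paper does not prove this theorem; it is quoted verbatim as a result of Brzezinski \cite{Brzezinski1995,Brzezinski1998} and used as a black box for the subsequent classification. There is therefore no proof in the paper against which to compare your proposal.

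As a sketch of Brzezinski's actual argument your outline has the right architecture, but step~2 is not quite right as stated. The classical correspondence identifies the \emph{type number} of $\OO$ (the number of conjugacy classes of orders locally isomorphic to $\OO$) with the number of proper isometry classes in the genus of the ternary form $\nrm|_{L_\OO}$, not the right ideal class number $h(\OO)$. One always has $T(\OO)\le h(\OO)$, so $h(\OO)=1$ certainly forces the genus of the ternary form to have a single class, but the converse fails in general; there are orders with $T(\OO)=1$ and $h(\OO)>1$. Thus ``single-class ternary genus'' is necessary but not sufficient, and you cannot read off $h(\OO)=1$ from the ternary genus alone. Brzezinski instead bounds the discriminant directly via the Eichler mass formula for $h(\OO)$ (extended by him to arbitrary, including non-Gorenstein, orders), enumerates orders below that bound, and computes $h(\OO)$ for each candidate. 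Your step~3 is essentially this, but it should be driven by the mass formula for $h(\OO)$ itself rather than by the ternary genus mass; once you make that correction the remaining obstacles you identify (local invariants at non-Gorenstein primes) are exactly the ones Brzezinski's papers resolve.
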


\begin{table}
\begin{align*}
\begin{array}{ccccc}
\begin{pmatrix}
 1 & 1 & 1 \\
 1 & 1 & 0 \\
 1 & 0 & 1 \\
\end{pmatrix}
&
\begin{pmatrix}
 1 & 1 & 0 \\
 1 & 1 & 0 \\
 0 & 0 & 1 \\
\end{pmatrix}
&
\begin{pmatrix}
 1 & 0 & 0 \\
 0 & 1 & 0 \\
 0 & 0 & 1 \\
\end{pmatrix}
&
\begin{pmatrix}
 1 & 1 & 1 \\
 1 & 1 & 0 \\
 1 & 0 & 2 \\
\end{pmatrix}
&
\begin{pmatrix}
 1 & 0 & 1 \\
 0 & 1 & 1 \\
 1 & 1 & 2 \\
\end{pmatrix}
\\ &&&& \\
\begin{pmatrix}
 1 & 1 & 0 \\
 1 & 1 & 0 \\
 0 & 0 & 2 \\
\end{pmatrix}
&
\begin{pmatrix}
 1 & 0 & 1 \\
 0 & 1 & 0 \\
 1 & 0 & 2 \\
\end{pmatrix}
&
\begin{pmatrix}
 1 & 1 & 1 \\
 1 & 1 & 0 \\
 1 & 0 & 3 \\
\end{pmatrix}
&
\begin{pmatrix}
 1 & 0 & 0 \\
 0 & 1 & 0 \\
 0 & 0 & 2 \\
\end{pmatrix}
&
\begin{pmatrix}
 1 & 0 & 1 \\
 0 & 1 & 1 \\
 1 & 1 & 3 \\
\end{pmatrix}
\\ &&&& \\
\begin{pmatrix}
 1 & 1 & 0 \\
 1 & 2 & 2 \\
 0 & 2 & 2 \\
\end{pmatrix}
&
\begin{pmatrix}
 1 & 1 & 0 \\
 1 & 1 & 0 \\
 0 & 0 & 4 \\
\end{pmatrix}
&
\begin{pmatrix}
 1 & 1 & 1 \\
 1 & 2 & 0 \\
 1 & 0 & 2 \\
\end{pmatrix}
&
\begin{pmatrix}
 1 & 0 & 0 \\
 0 & 1 & 0 \\
 0 & 0 & 3 \\
\end{pmatrix}
&
\begin{pmatrix}
 1 & 0 & 0 \\
 0 & 2 & 2 \\
 0 & 2 & 2 \\
\end{pmatrix}
\\ &&&& \\
\begin{pmatrix}
 1 & 1 & 0 \\
 1 & 2 & 1 \\
 0 & 1 & 2 \\
\end{pmatrix}
&
\begin{pmatrix}
 1 & 0 & 0 \\
 0 & 2 & 0 \\
 0 & 0 & 2 \\
\end{pmatrix}
&
\begin{pmatrix}
 1 & 0 & 1 \\
 0 & 1 & 1 \\
 1 & 1 & 5 \\
\end{pmatrix}
&
\begin{pmatrix}
 1 & 0 & 1 \\
 0 & 2 & 2 \\
 1 & 2 & 3 \\
\end{pmatrix}
&
\begin{pmatrix}
 1 & 0 & 0 \\
 0 & 2 & 2 \\
 0 & 2 & 3 \\
\end{pmatrix}
\\ &&&& \\
\begin{pmatrix}
 1 & 0 & 1 \\
 0 & 2 & 0 \\
 1 & 0 & 3 \\
\end{pmatrix}
&
\begin{pmatrix}
 1 & 1 & 1 \\
 1 & 3 & -1 \\
 1 & -1 & 3 \\
\end{pmatrix}
&
\begin{pmatrix}
 2 & 2 & 2 \\
 2 & 2 & 0 \\
 2 & 0 & 2 \\
\end{pmatrix}
&
\begin{pmatrix}
 2 & 2 & 0 \\
 2 & 2 & 0 \\
 0 & 0 & 2 \\
\end{pmatrix}
\end{array}
\end{align*}
    \caption{All orders of rational, definite quaternion algebras with class number $1$, up to isomorphism. Orders are represented by the Gram matrix of the ternary quadratic form corresponding to them.}
    \label{Brzezinski Table}
\end{table}

\begin{remark}
It is well-known that two orders $\OO, \OO'$ are isomorphic if and only if the associated ternary quadratic forms are equivalent, so in fact, Theorem \ref{Brzezinski's Theorem} characterizes orders with class number $1$ up to isomorphism.
\end{remark}

\begin{remark}
Strictly speaking, we will only be using isomorphism classes on Brzezinski's list where the discriminant is square-free---this classification was by Vign\`{e}ras \cite{Vigneras1980}.
\end{remark}

Using Brzezinski's result, we can easily produce a table of all isomorphism classes of $\ddagger$-subrings of definite, rational quaternion algebras with orthogonal involutions. To do it, we first define an algorithm that gets us most of the way there. This algorithm will make use of the fact that the functions
    \begin{align*}
    \Psi_\OO: \left\{\text{finite subsets of } \NN\right\} &\rightarrow \mathcal{P}(\OO \cap H^0) \\
    S &\mapsto \left\{z \in \OO \cap H^0\middle| \nrm(z) \in S\right\}
    \end{align*}
    
\noindent and
    \begin{align*}
    \Omega_\OO: \OO \cap H^0 \times  &\rightarrow \mathcal{P}(\OO \cap H^0) \\
    (i,n) &\mapsto \left\{j \in \OO \cap H^0\middle|j \neq 0, \ ij = -ji, \ \nrm(j) \leq n\right\}
    \end{align*}
    
\noindent are both computable as long as $\OO$ is an order of $H$, a totally definite quaternion algebra over an algebraic number field $K$.

\begin{alg}\label{Candidate Algorithm}
On an input of an order $\OO$ of a totally definite quaternion algebra $H$ over an algebraic number field $K$ with class number $1$, this algorithm returns a list $L$ of maximal $\ddagger$-orders such that for every maximal $\ddagger$-order $\OO'$ that isomorphic (as a ring) to $\OO$, $\OO'$ is isomorphic (as a $\ddagger$-ring) to an order in $L$. Maximal $\ddagger$-orders are represented by a standard basis $1,i,j,ij$ such that $\nrm(i)$ is minimal and $z^\ddagger = (ij)\overline{z}(ij)^{-1}$.
\begin{algorithm}[H]
\begin{algorithmic}[1]
\Procedure{CorrespondingRingWithInvolutionAlg}{$\OO$}
\If{$\disc(\OO)$ is not squarefree}
    \State \textbf{return} $\{\}$
\EndIf
\State $C \gets$ generators of ideals $\mathfrak{a}$ such that $\mathfrak{a}|\disc(\OO)$
\State $I \gets \Psi_\OO\left(\left\{n \in C\middle|\disc(\OO)/\disc(H)\middle|(n)\right\}\right)$
\For{$(\xi_1, \xi_2) \in I^2$}
    \For{$x \in C$}
        \If{$x\xi_1 x^{-1} \xi_2^{-1} \in K$}
            \State $I \gets I \backslash \{\xi_2\}$
        \EndIf
    \EndFor
\EndFor
\For{$\xi \in I$}
    \State $t \gets \nrm(\xi)$
    \State $B \gets \Omega_\OO(\xi, t)$
    \While{$B = \{\}$}
        \State $t \gets 2t$
        \State $B \gets \Omega_\OO(\xi, t)$
    \EndWhile
    \State $i \gets$ element in $B$ with minimal norm
    \State $j \gets i\xi/GCD(i^2,\xi^2)$
    \State $H' = \left(\frac{i^2,j^2}{K}\right)$
    \State $\Lambda(\xi) \gets$ basis of $\OO$ in terms of basis of $H'$
\EndFor
\For{$(\xi_1, \xi_2) \in I^2$}
    \If{$\Lambda(\xi_1) = \Lambda(\xi_2)$}
        \State $I \gets I \backslash \{\xi_2\}$
    \EndIf
\EndFor
\State $L \gets \Lambda(I)$
\State \textbf{return} $L$
\EndProcedure
\end{algorithmic}
\end{algorithm}
\end{alg}

\begin{remark}
This algorithm does not guarantee that this list is minimal---i.e. some of the $\ddagger$-rings on the list can be isomorphic to one another.
\end{remark}

\begin{proof}[Proof of Correctness]
Involutions on $H$ correspond to non-zero elements $\xi \in \OO \cap H^0$ by Theorem \ref{Classifying Involutions}. We know that for any maximal $\ddagger$-order, $\disc(\OO) = \disc(H) \cap \iota(\disc(\ddagger))$ by Theorem \ref{Discriminants of Maximal Orders}---if $\disc(\OO)$ isn't squarefree, we know that it can't be a maximal $\ddagger$-order. On the other hand, $\iota(\disc(\ddagger)) = (\nrm(\xi))$ if $\xi \in \OO^-$ with minimal non-zero norm. Therefore, we know if $\mathfrak{a} = (\nrm(\xi))$, then $\mathfrak{a}|\disc(\OO)$ and $\disc(\OO)/\disc(H)|\mathfrak{a}$. Ergo, involutions on $\OO$ correspond to elements $\xi \in \OO \cap H^0$ such that if $\mathfrak{a} = (\nrm(\xi))$, then $\mathfrak{a}|\disc(\OO)$ and $\disc(\OO)/\disc(H)|\mathfrak{a}$---this is precisely what the set $I$ consists of. For each $\xi \in I$, denote the corresponding orthogonal involution by $\ddagger_{\xi}$. If $\mathfrak{p}|\disc(\OO)$, there is a unique maximal $\ddagger$-order $\OO_\mathfrak{p}$ in $H_\mathfrak{p}$ \cite{Sheydvasser2017}; therefore, if $x \in \OO \cap H^0$ has $(\nrm(x))|\disc(\OO)$, $x\OO x^{-1} = \OO$. If $\xi_1, \xi_2 \in I$ and $x\xi_1 x^{-1}\xi_2^{-1} \in K$, then the map
    \begin{align*}
    \varphi:(\OO,\ddagger_{\xi_1}) &\mapsto (\OO,\ddagger_{\xi_2}) \\
    z &\mapsto xzx^{-1}
    \end{align*}
    
\noindent is an isomorphism of $\ddagger$-rings since $\varphi(\xi_1) = x\xi_1 x^{-1} = \lambda \xi_2$ for some $\lambda \in K^\times$, hence $(\OO,\ddagger_{\xi_1})^- \mapsto (\OO,\ddagger_{\xi_2})^-$---since this subspace uniquely determines the orthogonal involution and it is evident that $\varphi$ maps $\ddagger_{\xi_1}$ to some orthogonal involution, $\varphi$ is indeed an isomorphism of $\ddagger$-rings. Therefore, if we remove from $I$ one of each pair $(\xi_1, \xi_2)$ such that $x\xi_1 x^{-1}\xi_2^{-1} \in K$, we will still have a complete list of isomorphism classes. Finally, we construct a standard basis for $(\OO,\ddagger_\xi)$ by finding an element $i \in \OO \cap H^0$ such that $i\xi = -\xi i$ and $\nrm(i)$ is minimal. Then if we define $j = i\xi/GCD(i^2,\xi^2)$ and
    \begin{align*}
    H' = \left(\frac{i^2, j^2}{K}\right),
    \end{align*}
    
\noindent then clearly $\OO \hookrightarrow H'$ and in terms of the standard basis $1,i,j,ij$, we have $(w + xi + yj + zij)^{\ddagger_{\xi}} = w + xi + yj - zij$. Rewrite $\OO$ in terms of this standard basis and remove from $I$ any duplicates. This results in the desired list $L$.
\end{proof}

Running this algorithm allows us to prove the following theorem.

\begin{theorem}\label{Definite DOrders with Class Number 1}
Let $H$ be a definite, rational quaternion algebra with orthogonal involution $\ddagger$. Let $\OO$ be a maximal $\ddagger$-order---$\OO$ has right class number $1$ if and only if it is isomorphic as a $\ddagger$-ring to one of the orders listed in Table \ref{Class Number 1 Table}.
\end{theorem}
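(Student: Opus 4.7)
The plan is a direct enumeration that leverages Brzezinski's classification of class-number-$1$ orders together with Algorithm \ref{Candidate Algorithm} for enumerating orthogonal involutions on a fixed underlying order.

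For the non-trivial direction, suppose $\OO$ is a maximal $\ddagger$-order with right class number $1$. Forgetting the involution, $\OO$ is an order of a rational, definite quaternion algebra with class number $1$, so by Theorem \ref{Brzezinski's Theorem} its ring-isomorphism class appears in Table \ref{Brzezinski Table}. By Theorem \ref{Discriminants of Maximal Orders}, $\disc(\OO) = \disc(H) \cap \iota(\disc(\ddagger))$; as flagged in the remark after Brzezinski's theorem, only those entries of Brzezinski's list with square-free discriminant can occur as underlying rings of maximal $\ddagger$-orders, so I would restrict attention to them. For each such candidate, I would then run Algorithm \ref{Candidate Algorithm}: by its proof of correctness, this returns a finite list of $\ddagger$-rings guaranteed to contain every isomorphism class of $(\OO,\ddagger')$ where $\OO$ is the given ring and $\ddagger'$ is an orthogonal involution making $\OO$ a maximal $\ddagger$-order.

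Because Algorithm \ref{Candidate Algorithm} is not guaranteed to output a minimal list, the final step is deduplication. Here I would use the classifying invariants already available: by Theorem \ref{Discrimiant of Quaternion Algebras} together with the remark that a totally definite quaternion algebra with orthogonal involution is determined as a $\ddagger$-algebra by $(\disc(H),\disc(\ddagger))$, two candidates sitting in different pairs $(H,\ddagger)$ are automatically non-isomorphic, while two candidates in the same $(H,\ddagger)$ must be tested for equality as $\ddagger$-stable lattices via an explicit change-of-basis. What survives is precisely Table \ref{Class Number 1 Table}. The converse direction is immediate: every entry of Table \ref{Class Number 1 Table} is, as a ring, one of the orders in Brzezinski's table, and therefore has right class number $1$.

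The main obstacle is not conceptual but computational and organizational. The conceptual framework---Brzezinski plus Theorem \ref{Discriminants of Maximal Orders} plus Algorithm \ref{Candidate Algorithm} plus Theorem \ref{Discrimiant of Quaternion Algebras}---reduces the theorem to a finite computation, but that computation must be carried out for every square-free entry of Brzezinski's list, with careful bookkeeping so that (a) no isomorphism class is missed by the enumeration, (b) no two distinct isomorphism classes are accidentally collapsed during deduplication, and (c) the presentation conventions (standard basis with $\nrm(i)$ minimal, involution in the normalized form $\sigma(z) = (ij)\overline{z}(ij)^{-1}$) are applied uniformly so that Table \ref{Class Number 1 Table} is canonical.
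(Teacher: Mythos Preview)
Your proposal is correct and follows essentially the same approach as the paper: reduce to Brzezinski's list via Theorem \ref{Brzezinski's Theorem}, restrict to square-free discriminant using Theorem \ref{Discriminants of Maximal Orders}, enumerate involutions via Algorithm \ref{Candidate Algorithm}, and then deduplicate using the invariants $(\disc(H),\disc(\ddagger))$. The only place the paper is more specific than your outline is in the deduplication step: it explicitly identifies the spurious duplicates for $(\disc(H),\disc(\ddagger)) = (5,-5)$ and $(13,-13)$ as conjugates by $i$, and for the cells that genuinely contain two orders it invokes the local classification of maximal $\ddagger$-orders in $H_2$ from \cite{Sheydvasser2017} rather than an ad hoc change-of-basis search.
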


    \begin{table}
    \begin{align*}
    \begin{array}{l|ll}
    \disc(H) = 2 & \\
    & \disc(\ddagger) = -1, H = \left(\frac{-1,-1}{\QQ}\right) & \ZZ \oplus \ZZ i \oplus \ZZ j \oplus \ZZ \frac{1 + i + j + ij}{2} \\
    & \disc(\ddagger) = -2, H = \left(\frac{-1,-2}{\QQ}\right) & \ZZ \oplus \ZZ i \oplus \ZZ \frac{1 + i + j}{2} \oplus \ZZ \frac{1 + i + ij}{2} \\
    & \disc(\ddagger) = -3, H = \left(\frac{-2,-6}{\QQ}\right) & \ZZ \oplus \ZZ i \oplus \ZZ \frac{i + j}{2} \oplus \ZZ \frac{2 + ij}{4} \\
    & \disc(\ddagger) = -5, H = \left(\frac{-1,-5}{\QQ}\right) & \ZZ \oplus \ZZ i \oplus \ZZ j \oplus \ZZ \frac{1 + i + j + ij}{2} \\
    & \disc(\ddagger) = -6, H = \left(\frac{-2,-3}{\QQ}\right) & \ZZ \oplus \ZZ i \oplus \ZZ \frac{1 + j}{2} \oplus \ZZ \frac{i + ij}{2} \\
    & \disc(\ddagger) = -10, H = \left(\frac{-1,-10}{\QQ}\right) & \ZZ \oplus \ZZ i \oplus \ZZ \frac{1 + i + j}{2} \oplus \ZZ \frac{1 + i + ij}{2} \\
    & \disc(\ddagger) = -11, H = \left(\frac{-2,-22}{\QQ}\right) & \ZZ \oplus \ZZ i \oplus \ZZ \frac{i + j}{2} \oplus \ZZ \frac{2 + ij}{4} \\
    & \disc(\ddagger) = -22, H = \left(\frac{-2,-11}{\QQ}\right) & \ZZ \oplus \ZZ \oplus \ZZ i \oplus \ZZ \frac{1 + j}{2} \oplus \ZZ \frac{i + ij}{2} \\ \hdashline
    \disc(H) = 3 & \\
    & \disc(\ddagger) = -1, H = \left(\frac{-3,-3}{\QQ}\right) & \ZZ \oplus \ZZ \frac{1 + i}{2} \oplus \ZZ j \oplus \ZZ \frac{3j + ij}{6} \\
    & \disc(\ddagger) = -3, H = \left(\frac{-1,-3}{\QQ}\right) & \begin{cases} \ZZ \oplus \ZZ i \oplus \ZZ \frac{i + j}{2} \oplus \ZZ \frac{1 + ij}{2} \\ \ZZ \oplus \ZZ i \oplus \ZZ \frac{1 + j}{2} \oplus \ZZ \frac{i + k}{2} \end{cases} \\
    & \disc(\ddagger) = -6, H = \left(\frac{-1,-6}{\QQ}\right) & \ZZ \oplus \ZZ i \oplus \ZZ \frac{1 + i + j}{2} \oplus \ZZ \frac{1 + i + ij}{2} \\ \hdashline
    \disc(H) = 5 & \\
    & \disc(\ddagger) = -2, H = \left(\frac{-5,-10}{\QQ}\right) & \ZZ \oplus \ZZ i \oplus \ZZ \frac{1 + i + j}{2} \oplus \ZZ \frac{5 + 5i + ij}{10} \\
    & \disc(\ddagger) = -5, H = \left(\frac{-2,-10}{\QQ}\right) & \ZZ \oplus \ZZ i \oplus \ZZ \frac{2 + i + j}{4} \oplus \ZZ \frac{2 + 2i + ij}{4} \\
    & \disc(\ddagger) = -10, H = \left(\frac{-2,-5}{\QQ}\right) & \ZZ \oplus \ZZ i \oplus \ZZ \frac{1 + i + j}{2} \oplus \ZZ \frac{i + ij}{2} \\ \hdashline
    \disc(H) = 7 & \\
    & \disc(\ddagger) = -1, H = \left(\frac{-7,-7}{\QQ}\right) & \ZZ \oplus \ZZ \frac{1 + i}{2} \oplus \ZZ j \oplus \ZZ \frac{7j + ij}{14} \\
    & \disc(\ddagger) = -7, H = \left(\frac{-1,-7}{\QQ}\right) & \begin{cases} \ZZ \oplus \ZZ i \oplus \ZZ \frac{i + j}{2} \oplus \ZZ \frac{1 + ij}{2} \\ \ZZ \oplus \ZZ i \oplus \ZZ \frac{1 + j}{2} \oplus \ZZ \frac{i + k}{2} \end{cases} \\ \hdashline
    \disc(H) = 13 & \\
    & \disc(\ddagger) = -13, H = \left(\frac{-2,-26}{\QQ}\right) & \ZZ \oplus \ZZ i \oplus \ZZ \frac{2 + i + j}{4} \oplus \ZZ \frac{2 + 2i + ij}{4}
    \end{array}
    \end{align*}
    \caption{All maximal $\ddagger$-orders with class number $1$, up to isomorphism (as $\ddagger$-rings). Orders are grouped by the isomorphism class of the quaternion algebra they sit inside (determined by $\disc(H)$ and $\disc(\ddagger)$). A standard basis $1,i,j,ij$ is chosen for $H$ such that $H^- = ij\QQ$ and $\nrm(i)$ is minimal.}
    \label{Class Number 1 Table}
    \end{table}
    
\begin{proof}
First, note that every maximal $\ddagger$-order of a rational, definite quaternion algebra with right class number $1$ must be isomorphic as ring to one of the orders enumerated in Theorem \ref{Brzezinski's Theorem}. Running Algorithm \ref{Candidate Algorithm} on this list produces Table \ref{Class Number 1 Table}, except that for $(\disc(H),\disc(\ddagger)) = (5,-5),(13,-13)$, the algorithm produces two orders---namely,
    \begin{align*}
    \OO_1 &= \ZZ \oplus \ZZ i \oplus \ZZ \frac{2 + i + j}{4} \oplus \ZZ \frac{2 + 2i + ij}{4} \\
    \OO_2 &= \ZZ \oplus \ZZ i \oplus \ZZ \frac{2 - i + j}{4} \oplus \ZZ \frac{2 + 2i + ij}{4}.
    \end{align*}
    
\noindent However, clearly $\OO_2 = i \OO_1 i^{-1}$, hence $(\OO_1, \ddagger) \cong (\OO_2, \ddagger)$. To see that all of the remaining orders correspond to distinct isomorphism classes of rings of involutions, note that if $(\OO_1, \ddagger_1) \cong (\OO_2, \ddagger_2)$, then $\disc(\OO_1 \otimes_\ZZ \QQ) = \disc(\OO_2 \otimes_\ZZ \QQ)$ and $\disc(\ddagger_1) = \disc(\ddagger_2)$; therefore, orders in distinct cells cannot be isomorphic to one another. For the remaining cells that contain two orders $\OO_1, \OO_2$, we note that there exist two distinct isomorphism classes of maximal $\ddagger$-orders in $H_2$ \cite{Sheydvasser2017}, hence these orders must indeed be non-isomorphic.
\end{proof}

It now remains to go through this finite list and find the $\ddagger$-rings that are $\ddagger$-Euclidean. In fact, we will find something surprising: every $\ddagger$-Euclidean order allows a very special type of Euclidean stathm.

\begin{definition}
Let $H$ be a quaternion algebra over an algebraic number field $K$ with orthogonal involution $\ddagger$. Let $\mathcal{N}_{K/\QQ}$ be the norm form from $K$ to $\QQ$. Let $\OO$ be a $\ddagger$-Euclidean order of $H$---we say that $\OO$ is \emph{norm} $\ddagger$-\emph{Euclidean} if we can take the stathm to be $\Phi(z) = \left|\mathcal{N}_{K/\QQ}(\nrm(z))\right|$.
\end{definition}

\begin{theorem}\label{Definite Euclidean DOrders}
Let $H$ be a definite, rational quaternion field with orthogonal involution $\ddagger$. Let $\OO$ be a $\ddagger$-subring of $H$---$\OO$ is $\ddagger$-Euclidean if and only if there exists a maximal $\ddagger$-order $\OO'$ that is isomorphic as a $\ddagger$-ring to one of the orders listed in Table \ref{Euclidean Ring Table}. Equivalently, $\OO$ is $\ddagger$-Euclidean if and only if it is isomorphic as a $\ddagger$-ring to either one of the orders listed in Table \ref{Euclidean Ring Table} or one of the orders listed in Table \ref{Euclidean Ring Table II}. Finally, $\OO$ is $\ddagger$-Euclidean if and only if it is norm $\ddagger$-Euclidean.
\end{theorem}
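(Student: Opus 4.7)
The plan is to combine Corollary \ref{Class Number 1 Corollary} and Theorem \ref{Euclidean Means Almost Maximal} with the enumeration in Theorem \ref{Definite DOrders with Class Number 1} in order to reduce to a finite sifting problem. By Corollary \ref{Class Number 1 Corollary}, any $\ddagger$-Euclidean order has right class number $1$; by Theorem \ref{Euclidean Means Almost Maximal}, $\OO$ is $\ddagger$-Euclidean if and only if it is contained in a $\ddagger$-Euclidean maximal $\ddagger$-order $\OO'$ with $\OO^+ = {\OO'}^+$, and such an $\OO'$ must appear in Table \ref{Class Number 1 Table}. So the central task is to determine which entries of Table \ref{Class Number 1 Table} are $\ddagger$-Euclidean.

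For the positive direction, I reduce the Euclidean divisibility condition to a geometric one. Given $a,b \in \OO'$ with $b \neq 0$ and $ab^\ddagger \in {\OO'}^+$, set $\alpha := b^{-1}a \in H$. From $(ab^\ddagger)^\ddagger = ab^\ddagger$ we get $ba^\ddagger = ab^\ddagger$, and hence $\alpha^\ddagger = a^\ddagger(b^\ddagger)^{-1} = b^{-1}ab^\ddagger(b^\ddagger)^{-1} = \alpha$, so $\alpha \in H^+$. The identity $\nrm(a - bq) = \nrm(b)\,\nrm(\alpha - q)$ then shows that $\OO'$ is norm $\ddagger$-Euclidean if and only if for every $\alpha \in H^+$ there exists $q \in {\OO'}^+$ with $\nrm(\alpha - q) < 1$. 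Because $H$ is definite, $\nrm$ restricts to a positive-definite ternary form on the $3$-dimensional $\QQ$-vector space $H^+$, and ${\OO'}^+$ is a full-rank $\ZZ$-lattice inside it, so this is a covering-radius computation which can be carried out explicitly for each entry of Table \ref{Class Number 1 Table}. The entries that pass assemble into Table \ref{Euclidean Ring Table}, and are norm $\ddagger$-Euclidean by construction.

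For the negative direction, I must show the remaining maximal $\ddagger$-orders from Table \ref{Class Number 1 Table} fail to be $\ddagger$-Euclidean under \emph{any} stathm. The method is a Motzkin-style minimal-element argument adapted to the twisted setting: if $\OO'$ admitted a stathm $\Phi$, one iteratively builds the set of $b \in \OO'$ that are ``reducible'' to a unit or zero via the operation $(a,b) \mapsto (b, a - bq)$ with $q \in {\OO'}^+$, and derives a contradiction if this set fails to exhaust $\OO'$. Concretely, for each offending order I would exhibit a specific $b \in \OO'$ and a residue class $a + b{\OO'}^+$ (with $ab^\ddagger \in {\OO'}^+$) which can never be reduced into the previously-accumulated reducible set, precluding the existence of any stathm.

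The remaining claims are bookkeeping. Table \ref{Euclidean Ring Table II} is produced by enumerating, for each entry of Table \ref{Euclidean Ring Table}, all proper $\ddagger$-suborders with the same $+$-part; these are $\ddagger$-Euclidean by Theorem \ref{Euclidean Means Almost Maximal}, and by that same theorem every non-maximal $\ddagger$-Euclidean order arises this way. The equivalence with \emph{norm} $\ddagger$-Euclideanness then comes for free, since the norm stathm on each maximal order restricts to a norm stathm on every such suborder. I expect the main obstacle to be the negative direction: ruling out \emph{every} potential stathm (rather than merely the norm) requires a careful Motzkin-style argument together with an explicit obstruction for each failed maximal order, whereas the positive covering-radius computations, though tedious, are essentially mechanical for $3$-dimensional positive-definite lattices.
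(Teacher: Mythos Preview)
Your reduction is correct and matches the paper: combine Theorem~\ref{Euclidean Means Almost Maximal} with Corollary~\ref{Class Number 1 Corollary} and Theorem~\ref{Definite DOrders with Class Number 1} to cut down to the finite list in Table~\ref{Class Number 1 Table}, verify the positive cases by the covering-radius computation you describe (this is exactly Theorem~\ref{Covering Measure}), and then enumerate suborders with the same $+$-part to obtain Table~\ref{Euclidean Ring Table II}.

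The genuine divergence is in the negative direction. You propose a Motzkin-style transfinite argument, exhibiting for each failing order an explicit element that can never be absorbed into the reducible set. The paper instead invokes Corollary~\ref{Generation of SL} as a \emph{necessary} condition: if $\OO$ is $\ddagger$-Euclidean then $SL^\ddagger(2,\OO)=E^\ddagger(2,\OO)$. For each failing maximal $\ddagger$-order it then appeals to prior work (\cite{Sheydvasser2019}) showing $SL^\ddagger(2,\OO)\neq E^\ddagger(2,\OO)$ whenever the norm stathm fails and $\OO\cap\QQ(i)$ is Euclidean; the single remaining case $\OO\subset\left(\frac{-5,-10}{\QQ}\right)$ is dispatched by exhibiting a proper containment $E^\ddagger(2,\OO)\subset SL^\ddagger(2,\OO')\subsetneq SL^\ddagger(2,\OO)$ for a suborder $\OO'$. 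This group-theoretic route is much shorter because the heavy lifting has already been done elsewhere, and it simultaneously yields the stronger conclusion that $\ddagger$-Euclidean $\Leftrightarrow$ norm $\ddagger$-Euclidean without a separate argument. Your Motzkin approach is in principle sound, but as you acknowledge it is only a sketch: you have not produced the explicit obstructing elements for any of the six failing orders, and in the twisted setting (where the division step is constrained to $q\in\OO^+$) there is no off-the-shelf Motzkin machinery to cite, so each case would require genuine work.
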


\begin{remark}
Note that Theorem \ref{Enumeration Theorem} is an immediate corollary of Theorem \ref{Definite Euclidean DOrders}.
\end{remark}

    \begin{table}
    \begin{align*}
    \begin{array}{l|ll}
    \disc(H) = 2 & \\
    & \disc(\ddagger) = -1, H = \left(\frac{-1,-1}{\QQ}\right) & \ZZ \oplus \ZZ i \oplus \ZZ j \oplus \ZZ \frac{1 + i + j + ij}{2} \\
    & \disc(\ddagger) = -2, H = \left(\frac{-1,-2}{\QQ}\right) & \ZZ \oplus \ZZ i \oplus \ZZ \frac{1 + i + j}{2} \oplus \ZZ \frac{1 + i + ij}{2} \\
    & \disc(\ddagger) = -3, H = \left(\frac{-2,-6}{\QQ}\right) & \ZZ \oplus \ZZ i \oplus \ZZ \frac{i + j}{2} \oplus \ZZ \frac{2 + ij}{4} \\
    & \disc(\ddagger) = -6, H = \left(\frac{-2,-3}{\QQ}\right) & \ZZ \oplus \ZZ i \oplus \ZZ \frac{1 + j}{2} \oplus \ZZ \frac{i + ij}{2} \\
    & \disc(\ddagger) = -10, H = \left(\frac{-1,-10}{\QQ}\right) & \ZZ \oplus \ZZ i \oplus \ZZ \frac{1 + i + j}{2} \oplus \ZZ \frac{1 + i + ij}{2} \\ \hdashline
    \disc(H) = 3 & \\
    & \disc(\ddagger) = -3, H = \left(\frac{-1,-3}{\QQ}\right) & \begin{cases} \ZZ \oplus \ZZ i \oplus \ZZ \frac{i + j}{2} \oplus \ZZ \frac{1 + ij}{2} \\ \ZZ \oplus \ZZ i \oplus \ZZ \frac{1 + j}{2} \oplus \ZZ \frac{i + k}{2} \end{cases} \\
    & \disc(\ddagger) = -6, H = \left(\frac{-1,-6}{\QQ}\right) & \ZZ \oplus \ZZ i \oplus \ZZ \frac{1 + i + j}{2} \oplus \ZZ \frac{1 + i + ij}{2} \\ \hdashline
    \disc(H) = 5 & \\
    & \disc(\ddagger) = -5, H = \left(\frac{-2,-10}{\QQ}\right) & \ZZ \oplus \ZZ i \oplus \ZZ \frac{2 + i + j}{4} \oplus \ZZ \frac{2 + 2i + ij}{4} \\
    & \disc(\ddagger) = -10, H = \left(\frac{-2,-5}{\QQ}\right) & \ZZ \oplus \ZZ i \oplus \ZZ \frac{1 + i + j}{2} \oplus \ZZ \frac{i + ij}{2} \\ \hdashline
    \disc(H) = 7 & \\
    & \disc(\ddagger) = -7, H = \left(\frac{-1,-7}{\QQ}\right) & \begin{cases} \ZZ \oplus \ZZ i \oplus \ZZ \frac{i + j}{2} \oplus \ZZ \frac{1 + ij}{2} \\ \ZZ \oplus \ZZ i \oplus \ZZ \frac{1 + j}{2} \oplus \ZZ \frac{i + k}{2} \end{cases} \\ \hdashline
    \disc(H) = 13 & \\
    & \disc(\ddagger) = -13, H = \left(\frac{-2,-26}{\QQ}\right) & \ZZ \oplus \ZZ i \oplus \ZZ \frac{2 + i + j}{4} \oplus \ZZ \frac{2 + 2i + ij}{4}
    \end{array}
    \end{align*}
    \caption{All $\ddagger$-Euclidean, maximal $\ddagger$-orders, up to isomorphism (as $\ddagger$-rings). Orders are grouped by the isomorphism class of the quaternion algebra they sit inside (determined by $\disc(H)$ and $\disc(\ddagger)$). A standard basis $1,i,j,k$ is chosen for $H$ such that $H^- = k\QQ$ and $\nrm(i)$ is minimal.}
    \label{Euclidean Ring Table}
    \end{table}
    
        \begin{table}
    \begin{align*}
    \begin{array}{l|l}
    \disc(H) = 2 & \begin{array}{ll} \disc(\ddagger) = -1, H = \left(\frac{-1,-1}{\QQ}\right) & \ZZ \oplus \ZZ i \oplus \ZZ j \oplus ij \\ \disc(\ddagger) = -3, H = \left(\frac{-2,-6}{\QQ}\right) & \ZZ \oplus \ZZ i \oplus \ZZ \frac{i + j}{2} \oplus \ZZ \frac{ij}{2} \end{array}
    \end{array}
    \end{align*}
    \caption{All $\ddagger$-Euclidean, non-maximal $\ddagger$-orders, up to isomorphism (as $\ddagger$-rings). Orders are grouped by the isomorphism class of the quaternion algebra they sit inside (determined by $\disc(H)$ and $\disc(\ddagger)$). A standard basis $1,i,j,k$ is chosen for $H$ such that $H^- = k\QQ$ and $\nrm(i)$ is minimal.}
    \label{Euclidean Ring Table II}
    \end{table}
    
\begin{proof}
By Corollary \ref{Generation of SL}, if $\OO$ is $\ddagger$-Euclidean then $SL^\ddagger(2,\OO) = E^\ddagger(2,\OO)$. In \cite{Sheydvasser2019}, it was shown that if $\OO$ is a maximal $\ddagger$-order and $\OO \cap \QQ(i)$ is a Euclidean ring, then either $\OO$ is norm $\ddagger$-Euclidean or $SL^\ddagger(2,\OO) \neq E^\ddagger(2,\OO)$; furthermore, the full list of such norm $\ddagger$-Euclidean rings was computed. This proves the claim for every case except
    \begin{align*}
    \OO = \ZZ \oplus \ZZ i \oplus \ZZ \frac{1 + i + j}{2} \oplus \ZZ \frac{5 + 5i + ij}{10} \subset \left(\frac{-5,-10}{\QQ}\right).
    \end{align*}
    
\noindent However, $\OO$ contains the subring
    \begin{align*}
    \OO' = \ZZ \oplus \ZZ i \oplus \ZZ \frac{1 + i + j}{2} \oplus \ZZ \frac{1 + i + ij}{2}
    \end{align*}
    
\noindent and it is easy to check that $\OO^\times = \{\pm 1\}$, whence we see that $E^\ddagger(2,\OO) \subset SL^\ddagger(2,\OO') \subsetneq SL^\ddagger(2,\OO)$---therefore, $\OO$ is not $\ddagger$-Euclidean. To complete the proof, one appeals to Lemma \ref{Euclidean Means Almost Maximal} and checks that $\OO$ is the smallest ring containing $\OO \cap H^+$ in every case except when $\disc(H) = 2, \disc(\ddagger) = -1,-3$. In those two cases, the smallest ring that contains $\OO^+$ is the one given in Table \ref{Euclidean Ring Table II}---as this ring is index two inside of $\OO$, there cannot be any other rings strictly contained between them.
\end{proof}

As an aside, one possible means to measure how far a $\ddagger$-order $\OO$ is from being norm $\ddagger$-Euclidean is to measure the farthest distance in $H$ to points in $\OO^+$. That is, we make the following definition.

\begin{definition}
Let $H$ be a quaternion algebra over an algebraic number field $K$ with orthogonal involution $\ddagger$. Let $\mathcal{N}_{K/\QQ}$ be the norm form from $K$ to $\QQ$, and let $\OO$ be a $\ddagger$-order of $H$. For every $p \in H^+$, define $\rho(p,\OO) = \inf_{\tau \in \OO^+} \left|\mathcal{N}_{K/\QQ}(\nrm(p - \tau))\right|$ and $\rho(\OO) = \sup_{p \in H^+} \rho(p,\OO)$.
\end{definition}

The significance of this definition is simple.

\begin{theorem}\label{Covering Measure}
Let $H$ be a quaternion algebra over an algebraic number field $K$ with orthogonal involution $\ddagger$. Let $\OO$ be a $\ddagger$-order of $H$. Then $\OO$ is a norm $\ddagger$-Euclidean order if and only if $\rho(\OO) < 1$.
\end{theorem}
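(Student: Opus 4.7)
The plan is to show that both conditions in the theorem are equivalent to the pointwise statement ``$\rho(p,\OO) < 1$ for every $p \in H^+$,'' and then to invoke a compactness argument to promote a strict pointwise bound into a strict uniform one.

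First I would establish a dictionary between pairs $(a,b)$ appearing in the definition of norm $\ddagger$-Euclidean and points $p \in H^+$. Given $a,b \in \OO$ with $b \neq 0$ and $ab^\ddagger \in \OO^+$, set $p := b^{-1}a$. The hypothesis $ab^\ddagger = (ab^\ddagger)^\ddagger = ba^\ddagger$ rearranges to $(b^{-1}a)^\ddagger = a^\ddagger(b^\ddagger)^{-1} = b^{-1}a$, so $p \in H^+$. For any $q \in \OO^+$, multiplicativity of $\nrm$ and $\mathcal{N}_{K/\QQ}$ gives
\[
\Phi(a - bq) = \Phi(b(p-q)) = \Phi(b) \cdot \left|\mathcal{N}_{K/\QQ}(\nrm(p-q))\right|,
\]
so existence of $q$ with $\Phi(a-bq) < \Phi(b)$ is equivalent to existence of $q \in \OO^+$ with $|\mathcal{N}_{K/\QQ}(\nrm(p-q))| < 1$, which is in turn equivalent to $\rho(p,\OO) < 1$. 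Conversely, every $p \in H^+$ arises from such a pair: since $\OO$ is an $\mathfrak{o}_K$-lattice spanning $H$, one can pick a nonzero $b \in \mathfrak{o}_K$ with $a := bp \in \OO$, and then $b^\ddagger = b$ forces $ab^\ddagger = b^2 p \in \OO^+$. Consequently, $\OO$ is norm $\ddagger$-Euclidean if and only if $\rho(p,\OO) < 1$ for every $p \in H^+$.

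The implication $\rho(\OO) < 1 \Rightarrow$ norm $\ddagger$-Euclidean is now immediate from the definition of $\rho(\OO)$ as a supremum. The reverse direction is the main obstacle, since a strict pointwise bound does not automatically give a strict uniform bound. Here I would exploit the $\OO^+$-invariance of the map $p \mapsto \rho(p,\OO)$ to descend it to the quotient $H^+/\OO^+$. In the totally definite setting---which is the case relevant to the paper's main applications---$H^+$ is a positive-definite real inner product space under the form induced by $\nrm$, and $\OO^+$ is a full lattice in it, so $H^+/\OO^+$ is compact. Continuity of $\rho(\cdot,\OO)$ follows because $|\mathcal{N}_{K/\QQ}(\nrm(p-\tau))| \to \infty$ as $\tau \to \infty$ uniformly for $p$ in any bounded set; hence only finitely many $\tau \in \OO^+$ can compete for the infimum locally, and $\rho(\cdot,\OO)$ is locally the minimum of finitely many continuous functions. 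A continuous function on a compact space attains its maximum, so the supremum defining $\rho(\OO)$ is realized at some $p_0$, giving $\rho(\OO) = \rho(p_0,\OO) < 1$ and completing the equivalence.
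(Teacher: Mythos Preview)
Your approach coincides with the paper's: both establish the chain
\[
\rho(\OO) \;=\; \sup_{p\in H^+}\ \inf_{q\in\OO^+}\ \bigl|\mathcal N_{K/\QQ}(\nrm(p-q))\bigr| \;=\; \sup_{(a,b)}\ \inf_{q\in\OO^+}\ \frac{\Phi(a-bq)}{\Phi(b)}
\]
via the substitution $p=b^{-1}a$ and its inverse (clearing denominators by a scalar $b\in\mathfrak o_K$). The paper is in fact terser than you are---it simply records this identity and declares the equivalence with norm $\ddagger$-Euclideanity to be an immediate consequence, without singling out the direction you call ``the main obstacle.''

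Your compactness argument for that direction has a gap, however. The supremum defining $\rho(\OO)$ ranges over $p\in H^+$, a $K$-vector space, and $H^+/\OO^+$ is not compact; what \emph{is} compact is the real torus $(H^+\otimes_\QQ\RR)/\OO^+$. Your continuity argument shows the extension of $\rho(\,\cdot\,,\OO)$ attains its maximum there at some $p_0$, but $p_0$ need not lie in $H^+$, and the pointwise inequality $\rho(p_0,\OO)<1$---which you derived only for $p\in H^+$ from the Euclidean hypothesis---is then not directly available (density and continuity give only $\rho(p_0,\OO)\le 1$). For $K=\QQ$ with $H$ definite this is easily repaired: the maximum is the squared covering radius, attained at a Voronoi vertex of $\OO^+$, and since the Gram matrix of $\nrm$ is rational such vertices solve rational linear systems and hence lie in $H^+$. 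Relatedly, your sentence ``$H^+$ is a positive-definite real inner product space under the form induced by $\nrm$'' only parses when $K=\QQ$; for larger $K$ the form $\nrm$ is $K$-valued and $|\mathcal N_{K/\QQ}\circ\nrm|$ is not quadratic, so the general statement would need a different treatment.
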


\begin{proof}
The claim is an immediate consequence of the equality
\begin{align*}
\rho(\OO) &= \sup \left\{\inf_{q \in \OO^+} \left|\mathcal{N}_{K/\QQ}(\nrm(p - q))\right|\middle|p \in H^+\right\} \\
&= \sup \left\{\inf_{q \in \OO^+} \left|\mathcal{N}_{K/\QQ}(\nrm(b^{-1}a - q))\right| \middle|a,b \in \OO, \ b \neq 0, \ ab^\ddagger \in \OO^+\right\} \\
&= \sup \left\{\inf_{q \in \OO^+} \frac{\left|\mathcal{N}_{K/\QQ}(\nrm(a-bq))\right|}{\left|\mathcal{N}_{K/\QQ}(\nrm(b))\right|} \middle|a,b \in \OO, \ b \neq 0, \ ab^\ddagger \in \OO^+\right\},
\end{align*}

\noindent since the last quotient is less than $1$ if and only if $\left|\mathcal{N}_{K/\QQ}(\nrm(a-bq))\right| < \left|\mathcal{N}_{K/\QQ}(\nrm(b))\right|$, which is always possible to achieve if and only if $\OO$ is norm $\ddagger$-Euclidean.
\end{proof}

Since all of the orders $\OO$ listed in Table \ref{Euclidean Ring Table} are norm $\ddagger$-Euclidean, we know that $\rho(\OO) < 1$. These can be computed explicitly by finding the largest sphere that can be inscribed inside the lattice $\OO^+$: the results of this computation are given in Table \ref{Basic Properties Table}.

    \begin{table}
    \begin{align*}
    \begin{array}{l|l|l|l}
    H & \OO^+ & p & \rho(\OO) \\ \hline
    \left(\frac{-1,-1}{\QQ}\right) & \ZZ \oplus \ZZ i \oplus \ZZ j & \frac{1 + i + j}{2} & \frac{3}{4}\\
    \left(\frac{-1,-2}{\QQ}\right) & \ZZ \oplus \ZZ i \oplus \ZZ \frac{1 + i + j}{2} & \frac{1 + i}{2} & \frac{1}{2}\\
    \left(\frac{-2,-6}{\QQ}\right) & \ZZ \oplus \ZZ i \oplus \ZZ \frac{i + j}{2} & \frac{3 + 3i + j}{6} & \frac{11}{12} \\
    \left(\frac{-2,-3}{\QQ}\right) & \ZZ \oplus \ZZ i \oplus \ZZ \frac{1 + j}{2} & \frac{3 + 3i + j}{6} & \frac{5}{6}\\
    \left(\frac{-1,-10}{\QQ}\right) & \ZZ \oplus \ZZ i \oplus \ZZ \frac{1 + i + j}{2} & \frac{5 + 5i + 2j}{10} & \frac{9}{10} \\
    \left(\frac{-1,-3}{\QQ}\right) & \begin{cases} \ZZ \oplus \ZZ i \oplus \ZZ \frac{i + j}{2} \\ \ZZ \oplus \ZZ i \oplus \ZZ \frac{1 + j}{2} \end{cases} & \frac{3 + 3i + j}{6} & \frac{7}{12} \\
    \left(\frac{-1,-6}{\QQ}\right) & \ZZ \oplus \ZZ i \oplus \ZZ \frac{1 + i + j}{2} & \frac{3 + 3i + j}{6} & \frac{2}{3} \\
    \left(\frac{-2,-10}{\QQ}\right) & \ZZ \oplus \ZZ i \oplus \ZZ \frac{2 + i + j}{4} & \frac{1 + i}{2} & \frac{3}{4} \\
    \left(\frac{-2,-5}{\QQ}\right) & \ZZ \oplus \ZZ i \oplus \ZZ \frac{1 + i + j}{2} & \frac{5 + 5i + j}{10} & \frac{4}{5} \\
    \left(\frac{-1,-7}{\QQ}\right) & \begin{cases} \ZZ \oplus \ZZ i \oplus \ZZ \frac{i + j}{2} \\ \ZZ \oplus \ZZ i \oplus \ZZ \frac{1 + j}{2} \end{cases} & \frac{7 + 7i + 3j}{14} & \frac{23}{28} \\
    \left(\frac{-2,-26}{\QQ}\right) & \ZZ \oplus \ZZ i \oplus \ZZ \frac{2 + i + j}{4} & \frac{13 + 13i + 2j}{26} & \frac{47}{52}
    \end{array}
    \end{align*}
    \caption{A point $p$ that maximizes $\rho(p,\OO)$ for each of the $\ddagger$-orders listed in Table \ref{Euclidean Ring Table}, together with $\rho(\OO)$. Additionally, we include $\OO^\times$.}
    \label{Basic Properties Table}
    \end{table}
    
\section{Dirichlet and Coarse Fundamental Domains:}

Before we present a proof of Theorem \ref{Main Theorem}, we'll quickly recall definitions of Dirichlet and coarse fundamental domains, as well as explicitly describing the isomorphism between the group $SL^\ddagger(2,H_\RR)/\{\pm id\}$ and $\text{Isom}^0(\HH^4)$. Throughout, we shall be making use of the upper half-space model of $\HH^4$---that is,
    \begin{align*}
    \HH^4 = \left\{(x,y,z,t) \in \RR^4\middle|t > 0\right\}.
    \end{align*}

\noindent However, it will be more convenient to identify $\HH^4$ with the subset of $H_\RR$ with positive $k$-component, in which case we can write the hyperbolic distance compactly as
    \begin{align*}
    d: \HH^4 \times \HH^4 &\rightarrow \RR_{\geq 0} \\
    \left(p_1, p_2\right) &\mapsto \cosh^{-1}\left(1 + \frac{\nrm(p_1 - p_2)}{2\pi_k(p_1)\pi_k(p_2)}\right),
    \end{align*}
    
\noindent where $\pi_k(p)$ gives the $k$-th component of $p$. In this case, the boundary of $\HH^4$ is simply $H_\RR^+ \cup \{\infty\}$ and there is an action of $SL^\ddagger(2,H_\RR)$ on $\HH^4$ by M\"{o}bius transformations---that is,
    \begin{align*}
    \begin{pmatrix} a & b \\ c & d \end{pmatrix}.z = (az + b)(cz + d)^{-1}.
    \end{align*}
    
\noindent One can check that with this action, $SL^\ddagger(2,H_\RR)$ acts on $\HH^4$ by isometries and in fact $SL^\ddagger(2,H_\RR)/\{\pm id\} \cong \text{Isom}^0(\HH^4)$ \cite{Vahlen1902, Ahlfors1986}. Given any totally definite quaternion algebra $H$ with orthogonal involution $\ddagger$, one can fix an embedding $(H,\ddagger) \hookrightarrow (H_\RR, \ddagger)$ which then yields an action of $SL^\ddagger(2,H)$ on $\HH^4$---we shall always assume that we have fixed such an embedding whenever we discuss actions of groups $SL^\ddagger(2,\OO)$ on hyperbolic space. If we take $H$ to a definite, rational quaterion algebra with an orthogonal involution, and take $\OO$ to be a $\ddagger$-suborder of $H$, then $\OO$ will be a discrete subset of $H_\RR$ and correspondingly $SL^\ddagger(2,\OO)$ will be a discrete subset of $SL^\ddagger(2,H_\RR)$---ergo, $SL^\ddagger(2,\OO)/\{\pm id\}$ is a Kleinian subgroup of $\text{Isom}^0(\HH^4)$, which mean that we can define a Dirichlet domain for it.

\begin{definition}[See \cite{Engel1986,Katok1992}]
Let $(M,d)$ be a metric space. Let $\Gamma$ be a discrete group acting on $M$ by isometries. Let $z \in M$ be a point such that the stabilizer subgroup (i.e. the subgroup of $\gamma \in \Gamma$ such that $\gamma.z = z$) $\text{Stab}_\Gamma(z) = \{id\}$. The \emph{Dirichlet domain} for $\Gamma$ \emph{centered at} $z$ is the set
    \begin{align*}
    \left\{w \in M\middle| d(w,z) \leq d(w,\gamma.z) \ \forall \gamma \in \Gamma\right\}.
    \end{align*}
\end{definition}

For various important spaces such as Euclidean space $\mathbb{E}^n$ \cite{Engel1986} and hyperbolic space \cite{Katok1992}, a Dirichlet domain of a discrete group $\Gamma$ is always a \emph{fundamental domain}---that is, if $\mathcal{D}_z$ is a Dirichlet domain, then
    \begin{align*}
    \bigcup_{\gamma \in \Gamma} \gamma.\mathcal{D}_z &= M \\
    \gamma.\mathcal{D}_z^\circ \cap \mathcal{D}_z^\circ &= \emptyset, \ (\forall \gamma \neq id),
    \end{align*}
    
\noindent where $S^\circ$ is the interior of $S$. One can say more in such cases: if $\Gamma$ is an arithmetic group, then the Dirichlet domain is a finite-volume convex set bounded by finitely many geodesic faces \cite{ArithmeticGroups}. However, determining what these faces are algorithmically can still be a challenge---there are existing algorithms for Euclidean space \cite{Engel1986}, $\HH^2$ \cite{Voight2009}, and $\HH^3$ \cite{Page2015}, but there is no corresponding algorithm for $\HH^n$ with $n \geq 4$. As a first step toward handling the general $\HH^4$ case, we will just consider groups $SL^\ddagger(2,\OO)$, where $\OO$ is $\ddagger$-Euclidean. We first make use of a coarse fundamental domain as a stepping stone.

\begin{definition}[See \cite{ArithmeticGroups}]
Let $\Gamma$ be a group that acts properly discontinuously on a topological space $M$. A subset $\mathcal{F} \subset M$ is a \emph{coarse fundamental domain} for $\Gamma$ if
    \begin{enumerate}
        \item $\bigcup_{\gamma \in \Gamma} \gamma.\mathcal{F} = M$ and
        \item $\left\{\gamma \in \Gamma \middle| \gamma.\mathcal{F} \cap \mathcal{F} = \emptyset\right\}$.
    \end{enumerate}
\end{definition}

Any fundamental domain is a coarse fundamental domain, but the converse is false. We shall produce a coarse fundamental domain that is very easy to compute and such that with just a little bit of alteration, it yields a Dirichlet domain. Before we do that, however, we shall need a number of lemmas.

\begin{lemma}\label{Fudge Factor}
Let
    \begin{align*}
    g: SL^\ddagger(2,H_\RR) \times \HH^4 &\mapsto \RR \\
    \left(\left(\begin{smallmatrix} a & b \\ c & d \end{smallmatrix}\right), z\right) &\mapsto \nrm(cz + d).
    \end{align*}
    
\noindent Then $g(\gamma_1 \gamma_2, z) = g(\gamma_1, \gamma_2.z) g(\gamma_2, z)$ for all $z \in \HH^4$ and $\gamma_1, \gamma_2 \in SL^\ddagger(2,H_\RR)$.
\end{lemma}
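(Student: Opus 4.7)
The plan is to reduce the identity to the multiplicativity of the reduced norm on $H_\RR$ via a direct factorization. Write
\begin{align*}
\gamma_1 = \begin{pmatrix} a_1 & b_1 \\ c_1 & d_1 \end{pmatrix}, \qquad \gamma_2 = \begin{pmatrix} a_2 & b_2 \\ c_2 & d_2 \end{pmatrix},
\end{align*}
so that the bottom row of $\gamma_1\gamma_2$ is $(c_1 a_2 + d_1 c_2,\ c_1 b_2 + d_1 d_2)$. Consequently the quantity appearing inside $g(\gamma_1\gamma_2,z)$ can be regrouped as
\begin{align*}
(c_1 a_2 + d_1 c_2)z + (c_1 b_2 + d_1 d_2) = c_1(a_2 z + b_2) + d_1(c_2 z + d_2).
\end{align*}

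Next I factor $(c_2 z + d_2)$ out on the right. Since $\gamma_2.z = (a_2 z + b_2)(c_2 z + d_2)^{-1}$ by definition of the M\"obius action, the above expression equals
\begin{align*}
\bigl[c_1(a_2 z + b_2)(c_2 z + d_2)^{-1} + d_1\bigr](c_2 z + d_2) = \bigl[c_1(\gamma_2.z) + d_1\bigr](c_2 z + d_2).
\end{align*}
Applying the reduced norm and using that $\nrm$ is multiplicative on $H_\RR$ (since $H_\RR$ is a quaternion algebra and $\nrm(xy) = \nrm(x)\nrm(y)$ for all $x,y$) yields exactly $g(\gamma_1,\gamma_2.z)\, g(\gamma_2,z)$.

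The one small obstacle is that the factorization requires $(c_2 z + d_2)$ to be invertible in $H_\RR$. This holds for every $z \in \HH^4$: if $c_2 z + d_2 = 0$ with $c_2 \in H_\RR^\times$, then $z = -c_2^{-1} d_2$ would lie in $H_\RR^+$, contradicting $\pi_k(z) > 0$; and if $c_2 = 0$, then $\gamma_2 \in SL^\ddagger(2,H_\RR)$ forces $d_2 \in H_\RR^\times$, so $c_2 z + d_2 = d_2 \neq 0$. In either case $\nrm(c_2 z + d_2) \neq 0$ and the manipulation above is justified, completing the proof.
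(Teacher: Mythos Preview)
Your proof is correct and follows essentially the same approach as the paper: both arguments compute the bottom row of $\gamma_1\gamma_2$, recognize the factorization $c_1(a_2 z + b_2) + d_1(c_2 z + d_2) = \bigl[c_1(\gamma_2.z) + d_1\bigr](c_2 z + d_2)$, and then invoke multiplicativity of the reduced norm. You even add a justification the paper omits, namely that $c_2 z + d_2$ is invertible for $z \in \HH^4$; your case analysis there is fine (the relation $c_2 d_2^\ddagger \in H_\RR^+$ coming from membership in $SL^\ddagger(2,H_\RR)$ indeed forces $c_2^{-1}d_2 \in H_\RR^+$ when $c_2 \neq 0$).
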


\begin{proof}
Let
    \begin{align*}
    \gamma_i = \begin{pmatrix} a_i & b_i \\ c_i & d_i \end{pmatrix} \in SL^\ddagger(2,H_\RR),
    \end{align*}
    
\noindent so
    \begin{align*}
    \gamma_1 \gamma_2 = \begin{pmatrix} a_1 a_2 + b_1 c_2 & a_1 b_2 + b_1 d_2 \\ c_1 a_2 + d_1 c_2 & c_1 b_2 + d_1 d_2 \end{pmatrix}
    \end{align*}
    
\noindent and therefore
    \begin{align*}
    g(\gamma_1 \gamma_2, z) = \nrm\left((c_1 a_2 + d_1 c_2)z + c_1 b_2 + d_1 d_2\right).
    \end{align*}
    
\noindent On the other hand,
    \begin{align*}
    g(\gamma_1, \gamma_2.z) &= g\left(\left(\begin{smallmatrix} a_1 & b_1 \\ c_1 & d_1 \end{smallmatrix}\right), (a_2 z + b_2)(c_2 z + d_2)^{-1}\right) \\
    &= \nrm\left(c_1(a_2 z + b_2)(c_2 z + d_2)^{-1} + d_1\right) \\
    g(\gamma_2, z) &= \nrm(c_2 z + d_2),
    \end{align*}
    
\noindent ergo
    \begin{align*}
    g(\gamma_1, \gamma_2.z)g(\gamma_2, z) &= \nrm\left(c_1(a_2 z + b_2)(c_2 z + d_2)^{-1} + d_1\right)\nrm(c_2 z + d_2) \\
    &= \nrm\left(c_1(a_2 z + b_2) + d_1(c_2 z + d_2)\right) \\
    &= g(\gamma_1 \gamma_2, z),
    \end{align*}
		
\noindent as desired.
\end{proof}

The upshot of this lemma is that makes computing the $k$-th coordinate of $\gamma.z$ easy.

\begin{lemma}\label{kth coordinate}
For all $z \in \HH^4$ and $\gamma = \left(\begin{smallmatrix} a & b \\ c & d \end{smallmatrix}\right) \in SL^\ddagger(2,H_\RR)$, $\pi_k(\gamma.z) = \pi_k(z)/\nrm(cz + d)$.
\end{lemma}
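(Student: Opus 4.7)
Since $H_\RR^- = \RR k$, for any $w \in H_\RR$ the element $w - w^\ddagger$ is a real multiple of $k$; in fact $w - w^\ddagger = 2\pi_k(w)\,k$. So the desired formula is equivalent to the identity
\[
\gamma.z - (\gamma.z)^\ddagger \;=\; \frac{2\pi_k(z)}{\nrm(cz+d)}\,k,
\]
which I plan to verify by direct algebraic computation, using the defining relations of $SL^\ddagger(2,H_\RR)$ together with the fact that $z^\ddagger = k\bar z k^{-1}$ (since $k = ij$ spans $H_\RR^-$).

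Because $\ddagger$ is an anti-automorphism of $H_\RR$, I get
\[
(\gamma.z)^\ddagger = \bigl((cz+d)^\ddagger\bigr)^{-1}\bigl(z^\ddagger a^\ddagger + b^\ddagger\bigr).
\]
Writing $\gamma.z = \bigl((cz+d)^\ddagger\bigr)^{-1}(cz+d)^\ddagger(az+b)(cz+d)^{-1}$ and subtracting, the difference factors as
\[
\gamma.z - (\gamma.z)^\ddagger = \bigl((cz+d)^\ddagger\bigr)^{-1}\bigl[(cz+d)^\ddagger(az+b) - (az+b)^\ddagger(cz+d)\bigr](cz+d)^{-1}.
\]
Expanding the bracket and grouping terms by the placement of $z^\ddagger$ and $z$ yields
\[
z^\ddagger(c^\ddagger a - a^\ddagger c)z + z^\ddagger(c^\ddagger b - a^\ddagger d) + (d^\ddagger a - b^\ddagger c)z + (d^\ddagger b - b^\ddagger d).
\]
The four identities $a^\ddagger c = c^\ddagger a$, $d^\ddagger b = b^\ddagger d$, $d^\ddagger a - b^\ddagger c = 1$, and $a^\ddagger d - c^\ddagger b = 1$ read off from $\hat\sigma(M)M = I$ collapse this expression to exactly $z - z^\ddagger = 2\pi_k(z)\,k$.

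Since $\pi_k(z)\in\RR$ is central in $H_\RR$, it only remains to simplify $\bigl((cz+d)^\ddagger\bigr)^{-1}\,k\,(cz+d)^{-1}$. Setting $w = cz+d$ and using $w^\ddagger = k\bar w k^{-1}$ (so $(w^\ddagger)^{-1} = k\bar w^{-1}k^{-1}$) gives
\[
(w^\ddagger)^{-1}\,k\,w^{-1} = k\bar w^{-1}k^{-1}\cdot k \cdot w^{-1} = k\,\bar w^{-1}w^{-1} = \frac{k}{\nrm(w)},
\]
where the last equality uses that $\bar w^{-1}w^{-1} = (w\bar w)^{-1} = \nrm(w)^{-1}$ lies in $\RR$. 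Combined with the previous step, this yields the claimed formula. The main bookkeeping hazard is non-commutativity: the middle cancellation relies on applying precisely the relations from $\hat\sigma(M)M = I$ (not $M\hat\sigma(M) = I$), and the final simplification crucially depends on $k$ spanning $H_\RR^-$ so that conjugation by $k$ implements $\ddagger$. A conceptually tidier alternative would be to combine Lemma~\ref{Fudge Factor} with Corollary~\ref{Generation of SL} and verify the formula on each of the three types of generators of $E^\ddagger(2,H_\RR)$; but this first requires knowing that $H_\RR$ is $\ddagger$-Euclidean, so the direct computation above seems the most self-contained route.
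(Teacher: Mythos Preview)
Your direct computation is correct and is genuinely different from the paper's argument. The paper proceeds exactly along the lines of your ``tidier alternative'': it cites Vahlen and Ahlfors for the fact that $SL^\ddagger(2,H_\RR)$ is generated by the three standard types of elements, checks the identity on each generator, and then uses Lemma~\ref{Fudge Factor} (the cocycle property of $g(\gamma,z)=\nrm(cz+d)$) to pass to products. Your route instead exploits the four scalar relations coming from $\hat\sigma(M)M=I$ together with the conjugation formula $w^\ddagger = k\bar w k^{-1}$ to collapse $\gamma.z-(\gamma.z)^\ddagger$ directly. The payoff is that you avoid any appeal to a generating set for $SL^\ddagger(2,H_\RR)$ and do not need Lemma~\ref{Fudge Factor} at all; the paper's approach, on the other hand, makes transparent why Lemma~\ref{Fudge Factor} was isolated in the first place. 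One small correction to your closing remark: the paper does not go through Corollary~\ref{Generation of SL} or the $\ddagger$-Euclidean property of $H_\RR$ for this step---it simply cites the classical generation result for $SL^\ddagger(2,H_\RR)$ from \cite{Vahlen1902,Ahlfors1986}.
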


\begin{proof}
Recall that $SL^\ddagger(2,H_\RR)$ is generated by elements of the form
    \begin{align*}
    \begin{pmatrix} 1 & \tau \\ 0 & 1 \end{pmatrix}, \begin{pmatrix} 0 & 1 \\ -1 & 0 \end{pmatrix}, \begin{pmatrix} u & 0 \\ 0 & \left(u^\ddagger\right)^{-1} \end{pmatrix},
    \end{align*}
    
\noindent where $\tau \in H_\RR^+$ and $u \in H_\RR^\times$ \cite{Vahlen1902,Ahlfors1986}. Note that
    \begin{align*}
    \pi_k\left(\left(\begin{smallmatrix} 1 & \tau \\ 0 & 1 \end{smallmatrix}\right).z\right) &= \pi_k(z + \tau) = \pi_k(z) \\
    \pi_k\left(\left(\begin{smallmatrix} 0 & 1 \\ -1 & 0 \end{smallmatrix}\right).z\right) &= \pi_k(-z^{-1}) = \pi_k(z)/\nrm(z) \\
    \pi_k\left(\left(\begin{smallmatrix} u & 0 \\ 0 & \left(u^\ddagger\right)^{-1} \end{smallmatrix}\right).z\right) &= \pi_k(uzu^\ddagger) = \pi_k(z),
    \end{align*}
    
\noindent hence the desired relation holds for the generators. On the other hand, suppose that $\pi_k(\gamma.z) = \pi_k(z)/g(\gamma,z)$ holds for $\gamma = \gamma_1$ and $\gamma = \gamma_2$---we claim that it then also holds for $\gamma = \gamma_1 \gamma_2$. Indeed,
    \begin{align*}
    \pi_k(\gamma_1\gamma_2.z) &= \pi_k(\gamma_2.z)/g(\gamma_1,\gamma_2.z) \\
    &= \pi_k(z)/\left(g(\gamma_1,\gamma_2.z)g(\gamma_2,z)\right) \\
    &= \pi_k(z)/g(\gamma_1\gamma_2, z).
    \end{align*}
    
\noindent Therefore, by induction, the desired relation holds for all elements of $SL^\ddagger(2,H_\RR)$.
\end{proof}

We shall eventually need to bound $\pi_k(\gamma.z)$. To accomplish this, we first obtain a trivial bound on $\nrm(c)$ given a bound on $\nrm(cz + d)$.

\begin{lemma}\label{Trivial Bound}
Let $H$ be a definite, rational quaternion algebra with orthogonal involution $\ddagger$. For any $R \in \RR^+$, $z \in \HH^4$ and $c,d \in H$ such that $cd^\ddagger \in H^+$, if $\nrm(cz + d) < R$, then $\nrm(c) < R/\pi_k(z)^2$ and $\nrm(d) < \left(\sqrt{R} + \sqrt{\nrm(cz)}\right)^2$.
\end{lemma}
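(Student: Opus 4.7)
My plan is to derive both bounds from direct manipulations of the reduced norm, using the hypothesis $cd^\ddagger\in H^+$ only for the first.

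For the bound on $\nrm(d)$, the idea is the triangle inequality. Since $H$ is a definite quaternion algebra, $\nrm$ is a positive-definite quadratic form on $H_\RR$, so $\sqrt{\nrm(\cdot)}$ is a norm. Writing $d=(cz+d)-cz$ and applying the triangle inequality gives $\sqrt{\nrm(d)}\leq \sqrt{\nrm(cz+d)}+\sqrt{\nrm(cz)}<\sqrt{R}+\sqrt{\nrm(cz)}$, and squaring yields the claim.

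For the bound on $\nrm(c)$, the case $c=0$ is trivial, so I assume $c\neq 0$ and factor $\nrm(cz+d)=\nrm(c)\nrm(z+c^{-1}d)$. Applying $\ddagger$ to the hypothesis $cd^\ddagger\in H^+$ gives $cd^\ddagger=dc^\ddagger$; a short manipulation then shows $(c^{-1}d)^\ddagger=d^\ddagger(c^\ddagger)^{-1}=c^{-1}d$, so $w:=c^{-1}d\in H^+$. Decomposing $z=z_0+\pi_k(z)k$ with $z_0\in H_\RR^+$, the sum $z+w=(z_0+w)+\pi_k(z)k$ has $H^+$-part $z_0+w$ and $H^-$-part $\pi_k(z)k$, so the $k$-component is unaffected by the translation.

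The next key input is the orthogonality of $H_\RR^+$ and $H_\RR^-$ with respect to the polarization $\tr(\alpha\overline{\beta})$ of the norm form. Using Theorem \ref{Classifying Involutions} to write $z^\ddagger=u\overline{z}u^{-1}$ with $u\in H_\RR^-\cap H_\RR^\times$ (so $\overline{u}=-u$), a short computation gives $\tr(\alpha\overline{\beta})=0$ whenever $\alpha\in H^+$ and $\beta\in H^-$. Hence $\nrm$ is an orthogonal direct sum on $H^+\oplus H^-$, and $\nrm(z+w)=\nrm(z_0+w)+\pi_k(z)^2\nrm(k)\geq \pi_k(z)^2\nrm(k)\geq \pi_k(z)^2$, where in the final step I use that the embedding $(H,\ddagger)\hookrightarrow(H_\RR,\ddagger)$ is normalized so that $\nrm(k)\geq 1$. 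Combining yields $\nrm(c)\pi_k(z)^2\leq\nrm(cz+d)<R$, as required.

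There is no single hard step; the essential observation is that the hypothesis $cd^\ddagger\in H^+$ is precisely what is needed to force $c^{-1}d\in H^+$, and thereby preserve the $k$-component under the translation $z\mapsto z+c^{-1}d$. The only care required is in handling the normalization of $k$ inside $H_\RR$ so that the factor $\nrm(k)$ does not appear in the final bound.
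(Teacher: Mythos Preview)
Your argument is correct and is essentially the paper's proof with more detail supplied: the paper also treats $c=0$ separately, factors $\nrm(cz+d)=\nrm(c)\nrm(z+c^{-1}d)$, observes $c^{-1}d\in H^+$, and deduces $\nrm(z+c^{-1}d)\geq\pi_k(z)^2$; for $\nrm(d)$ it likewise uses the reverse triangle inequality $\sqrt{\nrm(cz+d)}\geq\sqrt{\nrm(d)}-\sqrt{\nrm(cz)}$. One small remark: your caveat about normalizing the embedding so that $\nrm(k)\geq 1$ is unnecessary, since $k$ here is the fixed basis element of the ambient real quaternions $H_\RR$ (with $k^2=-1$, hence $\nrm(k)=1$), not something carried over from $H$ via the embedding.
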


\begin{proof}
If $c = 0$, the statement is obvious. Otherwise, we can write $\nrm(z + c^{-1}d) < R/\nrm(c)$. Since $cd^\ddagger \in H^+$, $c^{-1}d \in H^+$, and therefore $\nrm(z + c^{-1}d) > \pi_k(z)^2$. Ergo, $\nrm(c) < R/\pi_k(z)^2$ as desired. The second inequality follows from the observation that $\sqrt{R} > \sqrt{\nrm(cz + d)} \geq \sqrt{\nrm(d)} - \sqrt{\nrm(cz)}$.
\end{proof}

This bound has two important consequences. First, it demonstrates that one can algorithmically list all pairs $c,d \in \OO$ such that $cd^\ddagger \in \OO^+$ and $\nrm(cz + d) < R$. Second, if $z \in \HH^4$ has sufficiently large $k$-component, every point in its orbit under $SL^\ddagger(2,\OO)$ has strictly smaller $k$-component unless it is an image under a combination of Euclidean translations and rotations.

\begin{corollary}\label{Orbit goes down}
Let $H$ be a definite, rational quaternion algebra with orthogonal involution $\ddagger$. Let $\OO$ be a $\ddagger$-order of $H$. For any $z \in \HH^4$, if $\pi_k(z) > 1$ then $\pi_k(\gamma.z) \leq \pi_k(z)$ for all $\gamma \in SL^\ddagger(2,\OO)$, with equality if and only if $\gamma$ stabilizes $\infty$.
\end{corollary}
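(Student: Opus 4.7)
The plan is to apply Lemma \ref{kth coordinate} directly. Writing $\gamma = \left(\begin{smallmatrix} a & b \\ c & d \end{smallmatrix}\right)$, we have $\pi_k(\gamma.z) = \pi_k(z)/\nrm(cz + d)$, so the entire claim reduces to proving $\nrm(cz + d) \geq 1$, with equality precisely when $c = 0$, together with the observation that $c = 0$ is equivalent to $\gamma$ fixing $\infty$ under the M\"{o}bius action. The latter follows immediately by evaluating $(az + b)(cz + d)^{-1}$ as $z \to \infty$.

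To handle $c = 0$, I would expand the defining relation $\gamma \hat{\sigma}(\gamma) = I$ to obtain $ad^\ddagger = 1$ and take reduced norms. Using that $\nrm(d^\ddagger) = \nrm(d)$ for an orthogonal involution, this gives $\nrm(a)\nrm(d) = 1$. Since $H$ is definite and rational and $\OO$ is an order, the reduced norm sends $\OO \setminus \{0\}$ into $\ZZ_{\geq 1}$, forcing $\nrm(d) = 1$ and hence $\pi_k(\gamma.z) = \pi_k(z)$.

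The case $c \neq 0$ is where the hypothesis $\pi_k(z) > 1$ comes in. The defining relation forces $cd^\ddagger \in \OO^+$; setting $\alpha = c^{-1}d$ and rewriting $cd^\ddagger = c\alpha^\ddagger c^\ddagger$, one checks that this is equivalent to $\alpha \in H^+$. Hence $\pi_k(c^{-1}d) = 0$, so $z + c^{-1}d$ has the same $k$-component as $z$, and positive-definiteness of $\nrm$ on $H_\RR$ (the essential content of the bound already used in Lemma \ref{Trivial Bound}) gives $\nrm(z + c^{-1}d) \geq \pi_k(z)^2$. Combined with $\nrm(c) \geq 1$, this yields
\begin{align*}
\nrm(cz + d) = \nrm(c)\nrm(z + c^{-1}d) \geq \pi_k(z)^2 > 1,
\end{align*}
and therefore $\pi_k(\gamma.z) < \pi_k(z)$. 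This simultaneously establishes the inequality and shows that the equality case can only occur when $c = 0$.

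The argument is essentially a careful repackaging of the norm estimates already used in Lemma \ref{Trivial Bound}, supplemented with an identification of the equality case via $\nrm(d) = 1$. The only slightly delicate step is the equivalence $cd^\ddagger \in H^+ \iff c^{-1}d \in H^+$, but this is a one-line algebraic check. I do not anticipate any serious obstacle.
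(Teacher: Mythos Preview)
Your argument is correct and is essentially the same as the paper's: both reduce to the norm estimate $\nrm(cz+d) \geq \nrm(c)\,\pi_k(z)^2$ (which is exactly the content of Lemma~\ref{Trivial Bound}) together with the integrality $\nrm(c) \in \ZZ_{\geq 0}$ to force $c=0$ in the equality case. You give a more explicit treatment of the $c=0$ case (deducing $\nrm(d)=1$ from $\gamma\hat\sigma(\gamma)=I$) than the paper does, but the underlying idea is identical.
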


\begin{proof}
Apply Lemma \ref{Trivial Bound} with $R = \pi_k(z)$---we get that $\nrm(c) < 1/\pi_k(z) < 1$. Since $c \in \OO$, this implies that $c = 0$, hence $\gamma$ stabilizes $\infty$.
\end{proof}

We need one final lemma that will help describe some of the geodesic hyper-planes bounding our coarse fundamental domain.

\begin{lemma}\label{Perpendicular Bisector}
Let $z \in \HH^2$. If $\nrm(z) > 1$, then the perpendicular bisector to the geodesic through $z$ and $-1/z$ is given by the circle with center $z_0$ and radius $r$, where
    \begin{align*}
    z_0 &= -\frac{2\Re(z)}{\nrm(z) - 1} \\
    r &= \frac{\sqrt{\left(\nrm(z) + 1\right)^2 - 4\Im(z)^2}}{\nrm(z) - 1},
    \end{align*}
    
\noindent where $\Re(z),\Im(z)$ are the real and imaginary parts of $z$, respectively.
\end{lemma}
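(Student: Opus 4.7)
The plan is to work directly from the definition: the perpendicular bisector of two points $p,q \in \HH^2$ is the locus of points equidistant from $p$ and $q$, and this locus is automatically a geodesic. So I will set up the equation $d(w,z) = d(w,-1/z)$ for $w \in \HH^2$, solve it algebraically, and read off the center and radius of the resulting Euclidean circle.

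Using the standard distance formula $\cosh d(w,p) = 1 + \nrm(w-p)/(2\Im(w)\Im(p))$ together with the identity $\Im(-1/z) = \Im(z)/\nrm(z)$, the equidistance condition reduces to the purely algebraic equation
\[
\nrm(w-z) = \nrm(z)\,\nrm(w + 1/z).
\]
After expanding each side using $1/z = \overline{z}/\nrm(z)$ and writing $w = u + iv$, the cross-terms satisfy $\Re(w\overline{z}) + \Re(wz) = 2\Re(z)\Re(w)$, so the equation collapses to
\[
(1-\nrm(z))(u^2 + v^2) - 4\Re(z)\,u - (1-\nrm(z)) = 0.
\]
Since $\nrm(z) > 1$, I may divide through by $1 - \nrm(z)$ and complete the square in $u$ to obtain the equation of a Euclidean circle centered on the real axis.

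The center comes out directly as $u_0 = -2\Re(z)/(\nrm(z)-1)$, with squared radius $\left((\nrm(z)-1)^2 + 4\Re(z)^2\right)/(\nrm(z)-1)^2$. To match the radius stated in the lemma, I will verify the short algebraic identity $(\nrm(z)-1)^2 + 4\Re(z)^2 = (\nrm(z)+1)^2 - 4\Im(z)^2$, which follows immediately by expanding both sides and using $\nrm(z) = \Re(z)^2 + \Im(z)^2$. Since the resulting Euclidean circle is centered on the real axis, its intersection with $\HH^2$ is a hyperbolic geodesic, which is the claimed perpendicular bisector.

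The only obstacle is keeping the algebra tidy; there is no conceptual difficulty. The hypothesis $\nrm(z) > 1$ enters solely to ensure $\nrm(z) - 1 \neq 0$, so that the division and completion of the square are legitimate; incidentally, the radicand $(\nrm(z)+1)^2 - 4\Im(z)^2$ is automatically nonnegative because $\nrm(z) + 1 \geq 2\sqrt{\nrm(z)} \geq 2|\Im(z)|$ by AM--GM.
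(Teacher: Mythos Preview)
Your proof is correct, and it takes a genuinely different route from the paper's.

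The paper argues geometrically: since the M\"obius map $w \mapsto -w^{-1}$ swaps $z$ and $-z^{-1}$, it must fix the perpendicular bisector, so the two real endpoints of the bisector are $\alpha$ and $-\alpha^{-1}$; the midpoint of the geodesic from $z$ to $-z^{-1}$ is forced to be $i$, and then a cross-ratio argument (rotating about $i$ to send the geodesic to the imaginary axis) yields an equation for $\alpha$, from which $z_0 = (\alpha - \alpha^{-1})/2$ and $r = (\alpha + \alpha^{-1})/2$ are read off.

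You instead work directly from the equidistance condition $d(w,z) = d(w,-1/z)$, reduce via the distance formula and the identity $\Im(-1/z) = \Im(z)/\nrm(z)$ to the algebraic equation $\nrm(w-z) = \nrm(z)\,\nrm(w + 1/z)$, expand, complete the square, and identify the center and radius. Your derivation is sound; the key cancellation $\Re(w\overline z) + \Re(wz) = 2\Re(z)\Re(w)$ is exactly what collapses the equation to a circle centered on the real axis, and the identity $(\nrm(z)-1)^2 + 4\Re(z)^2 = (\nrm(z)+1)^2 - 4\Im(z)^2$ is immediate from $\nrm(z) = \Re(z)^2 + \Im(z)^2$.

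Your approach is more elementary and self-contained---it needs only the distance formula and routine algebra---whereas the paper's approach exposes more structure (the involution symmetry and the fixed point at $i$) at the cost of invoking cross-ratio invariance. For the purposes of this lemma, your argument is arguably the cleaner one.
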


\begin{figure}
\begin{tabular}{cc}
\includegraphics[width=0.3\textwidth]{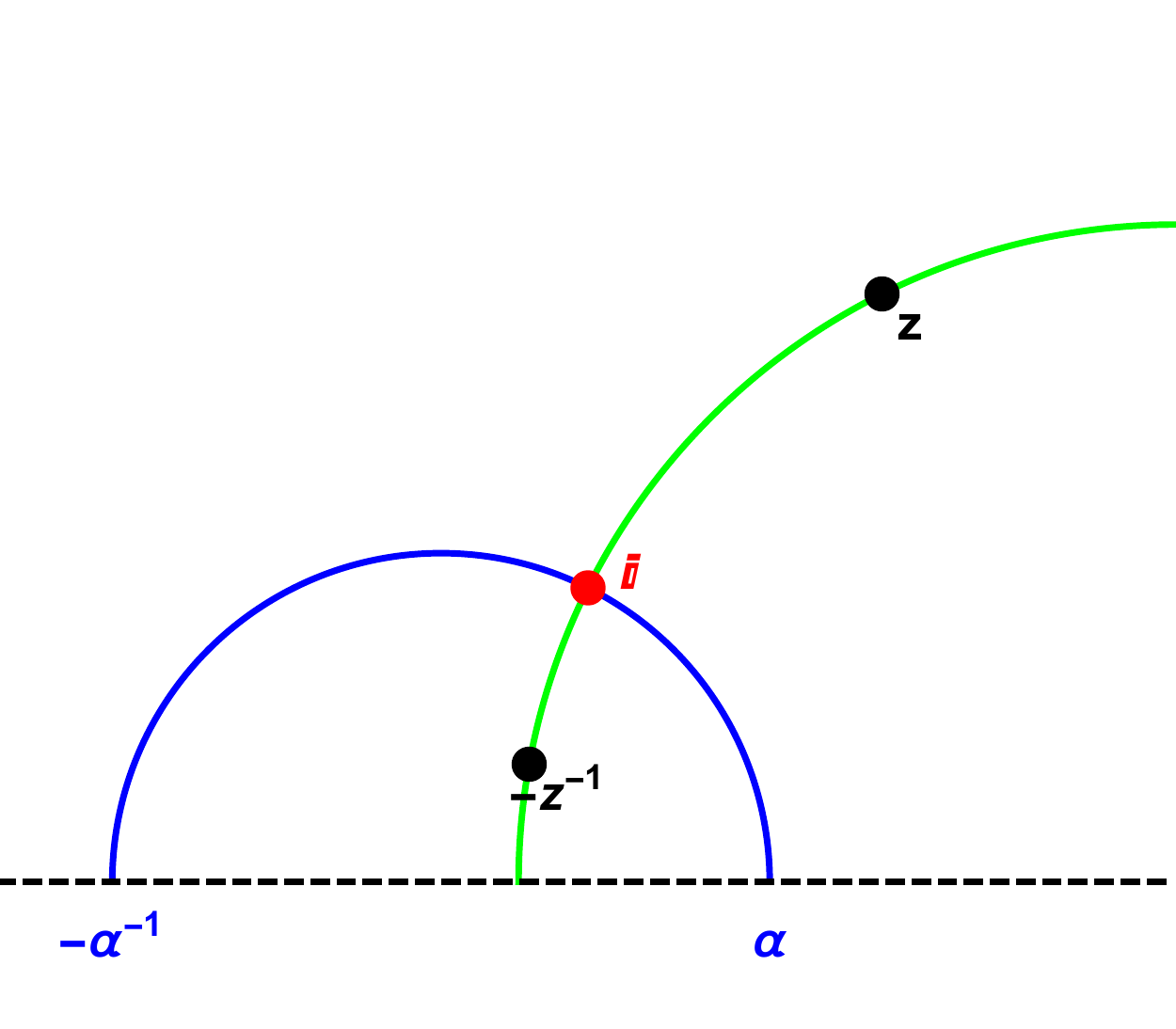} & \includegraphics[width=0.3\textwidth]{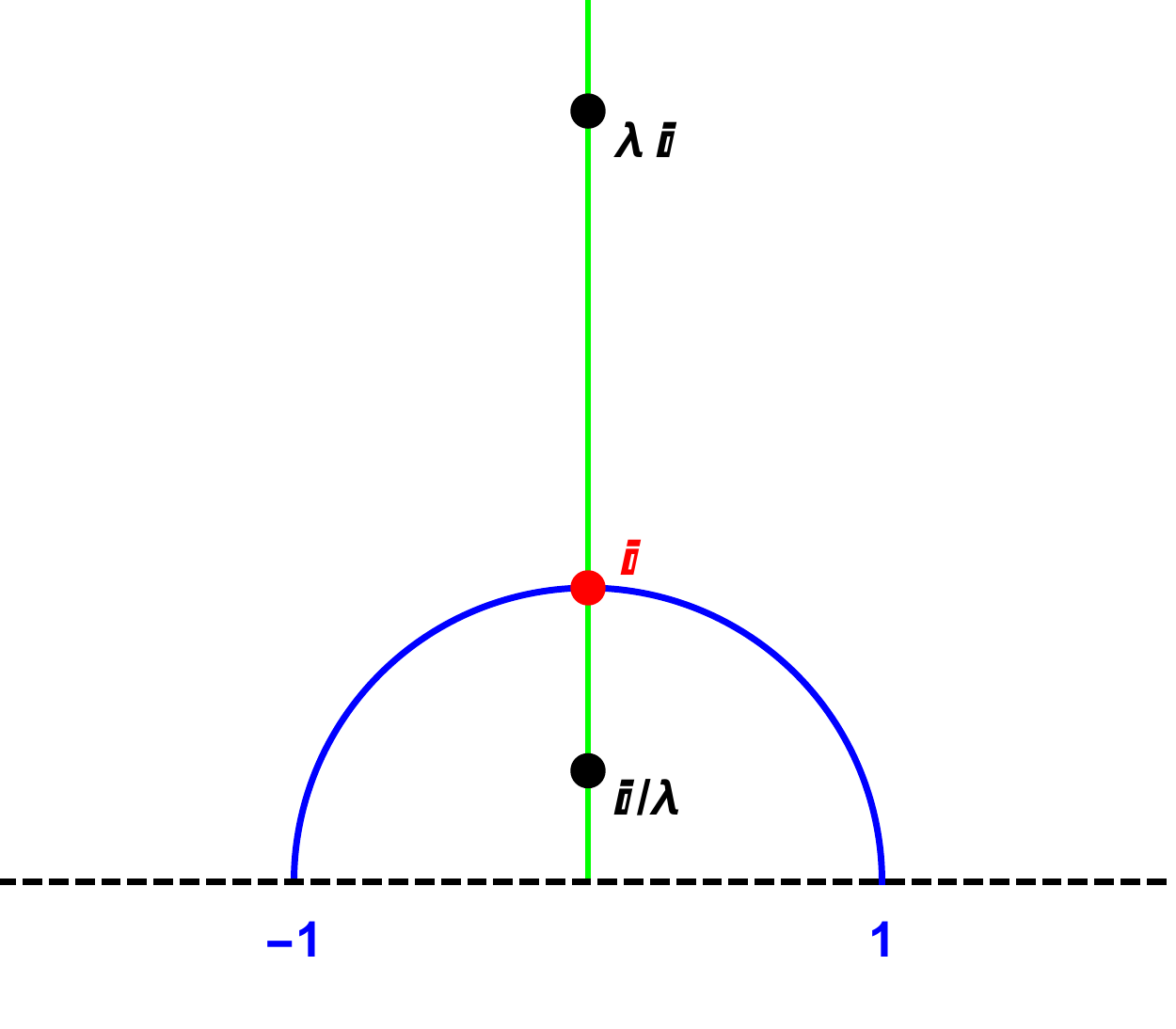}
\end{tabular}

\caption{The cross-ratio $[\alpha,-\alpha^{-1};z,-z^{-1}]$ has to equal the cross-ratio $[1,-1;\lambda i, i/\lambda]$, since these points are related by a hyperbolic rotation.}
\label{CrossRatio}
\end{figure}

\begin{proof}
Let $\alpha, \beta$ be the points where the perpendicular bisector intersects the real line---since $w \mapsto -w^{-1}$ interchanges $z$ and $-z^{-1}$, it must fix the perpendicular bisector, whence we get that $\beta = -\alpha^{-1}$. Let $l$ be the geodesic through $z$ and $-z^{-1}$; $l$ must also be fixed by $w \mapsto -w^{-1}$. Therefore, the intersection of $l$ and the perpendicular bisector is fixed, ergo it must be $i$. From this, we see that if we rotate the plane around $i$, we can move $l$ to the imaginary axis, at which point the perpendicular bisector moves to the unit circle. If this transformation moves $z \mapsto \lambda i$, then we have that the cross-ratio $[\alpha,-\alpha^{-1};z,-z^{-1}]$ has to equal the cross-ratio $[1,-1;\lambda i, i/\lambda]$, since linear fractional transformations preserve the cross-ratio. But
    \begin{align*}
    [\alpha,-\alpha^{-1};z,-z^{-1}] &= -\frac{(z-\alpha )^2}{(\alpha  z+1)^2} \\
    [1,-1;\lambda i, i/\lambda] &= -\frac{(\lambda +i)^2}{(\lambda -i)^2},
    \end{align*}
    
\noindent hence $(z-\alpha)(\lambda - i) = \pm (\lambda + i)(\alpha z + 1)$. This is easy to solve for $\alpha$ and $\lambda$ and we get
    \begin{align*}
    z_0 &= \frac{\alpha - \alpha^{-1}}{2} = -\frac{2\Re(z)}{\nrm(z) - 1} \\
    r &= \frac{\alpha + \alpha^{-1}}{2} = \frac{\sqrt{\left(\nrm(z) + 1\right)^2 - 4\Im(z)^2}}{\nrm(z) - 1},
    \end{align*}
    
\noindent as desired.
\end{proof}

\begin{theorem}\label{Coarse Fundamental Domain}
Let $H$ be a definite, rational quaternion algebra with orthogonal involution $\ddagger$. Let $H^- \otimes_\QQ \RR$ be spanned by $\xi \in \HH^4$ with $\nrm(\xi) = 1$. Let $\OO$ be a Euclidean $\ddagger$-order of $H$, $\Gamma = SL^\ddagger(2,\OO)/\{\pm id\}$, and $\Gamma_\infty = \text{Stab}_\Gamma(\infty)$. Choose an element $\alpha \in H^+$ such that $\nrm(\alpha) \leq 1$ and $\text{Stab}_{\Gamma_\infty}(\alpha) = \{id\}$. For every $t > \sqrt{2}$, the stabilizer of $z = \alpha + t \xi$ inside $\Gamma$ is trivial, and the Dirichlet domain $\mathcal{D}_z$ centered at $z$ is contained inside
    \begin{align*}
    \mathcal{D}_z^* = \left(\mathcal{F}_z \times \RR^+\right) \cap \left\{w \in \HH^4\middle| \nrm\left(w - \tau + \frac{2\alpha}{\nrm(z) - 1}\right) \geq \frac{\left(\nrm(z) + 1\right)^2 - 4t^2}{\left(\nrm(z) - 1\right)^2}, \ \forall \tau \in \OO^+\right\}
    \end{align*}
    
\noindent where $\mathcal{F}_\alpha$ is the Dirichlet domain of $\Gamma_\infty$ centered at $\alpha$. Furthermore, $\mathcal{D}_z^*$ is a coarse fundamental domain for $\Gamma$. Indeed, defining
    \begin{align*}
    L = \sup\left\{d(z,w)\middle|w \in \mathcal{D}_z^* \cap \left(\mathcal{F}_z^* \times [0,t]\right)\right\},
    \end{align*}
    
\noindent and we have that
    \begin{align*}
    \Gamma' = \left\{\gamma \in \Gamma\middle|d(\gamma.z,z) < L/2\right\}
    \end{align*}
    
\noindent is a finite set and
    \begin{align*}
    \mathcal{D}_z = \mathcal{D}_z^* \cap \left\{w \in \HH^4 \middle| d(w,z) \leq d(w,\gamma.z), \ \forall \gamma \in \Gamma'\right\}.
    \end{align*}
\end{theorem}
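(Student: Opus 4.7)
The plan is to verify four assertions in sequence: triviality of $\text{Stab}_\Gamma(z)$, the inclusion $\mathcal{D}_z \subseteq \mathcal{D}_z^*$, the coarse-fundamental-domain property of $\mathcal{D}_z^*$, and the explicit finite description of $\mathcal{D}_z$. For the first: since $\pi_k(z) = t > \sqrt{2} > 1$, Corollary \ref{Orbit goes down} forces any stabilizing $\gamma$ into $\Gamma_\infty$. Every $\gamma \in \Gamma_\infty$ has the form $\begin{pmatrix} u & b \\ 0 & (u^\ddagger)^{-1} \end{pmatrix}$ with $u \in \OO^\times$ and $bu^\ddagger \in \OO^+$, acting on $w = \beta + s\xi \in \HH^4$ by $w \mapsto u\beta u^\ddagger + bu^\ddagger + s(u\xi u^\ddagger)$. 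Total definiteness gives $\nrm(u) = 1$, and since $u\xi u^\ddagger \in H^-$ is one-dimensional of norm $\nrm(u)^2 = 1$, we have $u\xi u^\ddagger = \pm\xi$. The condition $\gamma.z = z$ with $t > 0$ rules out the minus sign, and what remains is the affine identity $u\alpha u^\ddagger + bu^\ddagger = \alpha$ on $H^+$, i.e., $\gamma \in \text{Stab}_{\Gamma_\infty}(\alpha) = \{id\}$.

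For the inclusion $\mathcal{D}_z \subseteq \mathcal{D}_z^*$, the factor $\mathcal{F}_\alpha \times \RR^+$ arises from bisectors of $z$ with $\Gamma_\infty$-images of $z$; these preserve the $\xi$-coordinate and so carve out $\mathcal{F}_\alpha$ at every height. For the spherical constraint indexed by $\tau \in \OO^+$, I would use the element $\gamma_\tau := \begin{pmatrix} 1 & \tau \\ 0 & 1 \end{pmatrix}\begin{pmatrix} 0 & -1 \\ 1 & 0 \end{pmatrix} = \begin{pmatrix} \tau & -1 \\ 1 & 0 \end{pmatrix} \in E^\ddagger(2,\OO) = SL^\ddagger(2,\OO)$ (Corollary \ref{Generation of SL}). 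Its action gives $\gamma_\tau.z = \tau - z^{-1}$, and the bisector condition unfolds via Lemma \ref{kth coordinate} into $\nrm(w-z) = \nrm(z)\nrm(w - \tau + z^{-1})$. The key step, and the main obstacle, is the quaternionic analog of Lemma \ref{Perpendicular Bisector}: by decomposing $w$ and $z$ along $H^+ \oplus \RR\xi$, expanding $z^{-1} = \bar{z}/\nrm(z)$, and carefully collecting terms, one should verify that this bisector is precisely the Euclidean hemisphere centered at $\tau - 2\alpha/(\nrm(z)-1)$ of radius $\sqrt{(\nrm(z)+1)^2 - 4t^2}/(\nrm(z)-1)$ with $z$ on its exterior. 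Noncommutativity of $H_\RR$ makes this calculation considerably more delicate than its $\HH^2$ counterpart and is where the heaviest lifting of the proof occurs.

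For $\mathcal{D}_z^*$ being a coarse fundamental domain, given $w \in \HH^4$ I iterate a reduction procedure modeled on Algorithm \ref{Euclidean Algorithm}: use translations $\begin{pmatrix} 1 & -\tau \\ 0 & 1 \end{pmatrix}$ together with unit-twists in $\Gamma_\infty$ to push the $H^+$-part of $w$ into $\mathcal{F}_\alpha$, and if some spherical constraint fails, apply $\begin{pmatrix} 0 & -1 \\ 1 & 0 \end{pmatrix}$, which by the explicit form of the failing constraint strictly increases $\pi_k(w)$. Termination mirrors the correctness proof of Algorithm \ref{Euclidean Algorithm}: writing $w = ab^{-1}$ locally, the relevant stathm of the denominator strictly decreases each round, and Corollary \ref{Orbit goes down} bounds $\pi_k$ above on every $\Gamma$-orbit, preventing infinite oscillation. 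Together with $\mathcal{D}_z \subseteq \mathcal{D}_z^*$ this yields $\bigcup_\gamma \gamma.\mathcal{D}_z^* = \HH^4$; the finiteness condition in the definition of coarse fundamental domain follows from the discreteness of $\Gamma$ combined with the fact that $\mathcal{D}_z^*$ has only a single cusp direction (the vertical chimney over $\mathcal{F}_\alpha$).

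Finally, to refine $\mathcal{D}_z^*$ down to $\mathcal{D}_z$: the region $\mathcal{D}_z^* \cap (\mathcal{F}_\alpha \times [0,t])$ is bounded in $\HH^4$ — horizontally by the compactness of $\mathcal{F}_\alpha$, vertically by $[0,t]$, and away from the boundary by the bounding spheres — so its hyperbolic diameter $L$ is finite. A bisector of $z$ and $\gamma.z$ can meet $\mathcal{D}_z^*$ only if $d(\gamma.z, z) < 2L$, and discreteness of $\Gamma$ makes this set $\Gamma'$ finite; intersecting $\mathcal{D}_z^*$ with the corresponding half-spaces then recovers $\mathcal{D}_z$. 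The hardest step in my view is the quaternionic bisector identification in Step 2; the remaining three steps amount to careful bookkeeping once Algorithm \ref{Euclidean Algorithm} and Corollary \ref{Orbit goes down} are in hand.
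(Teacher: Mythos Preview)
Your Steps 1, 2, and 4 track the paper's argument closely, and your caution about the quaternionic bisector computation in Step 2 is well placed. The genuine gap is in Step 3. First, the covering condition $\bigcup_\gamma \gamma.\mathcal{D}_z^* = \HH^4$ is automatic once $\mathcal{D}_z \subseteq \mathcal{D}_z^*$ is known, since $\mathcal{D}_z$ is already a fundamental domain; your reduction procedure is aimed at the wrong target. Second, the claim that applying $\left(\begin{smallmatrix} 0 & -1 \\ 1 & 0 \end{smallmatrix}\right)$ ``strictly increases $\pi_k(w)$'' whenever a spherical constraint of $\mathcal{D}_z^*$ fails is not correct: by Lemma~\ref{kth coordinate} inversion raises height precisely when $\nrm(w) < 1$, whereas the bisecting hemispheres are centered at $\tau - 2\alpha/(\nrm(z)-1)$ with radius $R$ that may exceed $1$, so lying under one of them does not force $\nrm(w) < 1$. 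Your appeal to Algorithm~\ref{Euclidean Algorithm} via ``$w = ab^{-1}$ locally'' does not repair this, because a generic $w \in \HH^4$ is not a ratio of elements of $\OO$.

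What the argument actually requires for the finiteness half of the coarse-fundamental-domain property (and for $L < \infty$ in Step 4) is that the hemispheres cover the finite boundary $H_\RR^+$, so that $\mathcal{D}_z^*$ is bounded away from height $0$. The paper gets this from the inequality $\rho(\OO) < 1$: the hemisphere centers form a coset of $\OO^+$ with covering radius $\rho(\OO)$ in norm, and each radius satisfies $R \geq 1$, so the floor of $\mathcal{D}_z^*$ sits at height at least $\sqrt{1 - \rho(\OO)} > 0$. The bound $\rho(\OO) < 1$ is \emph{not} immediate from $\OO$ being $\ddagger$-Euclidean; it requires the classification in Theorem~\ref{Definite Euclidean DOrders} (every $\ddagger$-Euclidean order here is in fact norm $\ddagger$-Euclidean) together with Theorem~\ref{Covering Measure}. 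This ingredient is absent from your proposal, and without it neither the single-cusp claim, the finiteness of $\Gamma'$, nor the finiteness of $L$ follows.
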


\begin{proof}
By Corollary \ref{Orbit goes down}, if $t > 1$, $\text{Stab}_{\Gamma_\infty}(\alpha) = \{id\}$ implies $\text{Stab}_{\Gamma}(\alpha + t\xi) = \{id\}$. By the definition of $\mathcal{D}_z$, we know that
    \begin{align*}
    \mathcal{D}_z \subset &\bigcap_{\gamma \in \Gamma_\infty} \left\{w \in \HH^4\middle|d(z,w) \leq d(\gamma.z,w)\right\} \\
    \cap&\bigcap_{\tau \in \OO^+} \left\{w \in \HH^4\middle|d(z,w - \tau) \leq d(-z^{-1},w - \tau)\right\}.
    \end{align*}
    
\noindent However, clearly
    \begin{align*}
    \bigcap_{\gamma \in \Gamma_\infty} \left\{w \in \HH^4\middle|d(z,w) \leq d(\gamma.z,w)\right\} = \mathcal{F}_z \times \RR^+ and
    \end{align*}
    
    \begin{align*}
    \left\{w \in \HH^4\middle|d(z,w - \tau) \leq d(-z^{-1},w - \tau)\right\} = \left\{w \in \HH^4\middle|d(z,w) \leq d(-z^{-1},w)\right\} + \tau.
    \end{align*}
    
\noindent The collection of all $w$ satisfying $d(z,w) = d(-z^{-1},w)$ is the geodesic hyper-sphere orthogonal to the geodesic between $z$ and $-z^{-1}$---by Lemma \ref{Perpendicular Bisector}, we know that this is the set
    \begin{align*}
    \left\{w \in \HH^4\middle| \nrm\left(w + \frac{2\alpha}{\nrm(z) - 1}\right) = \frac{\left(\nrm(z) + 1\right)^2 - 4t^2}{\left(\nrm(z) - 1\right)^2}\right\}.
    \end{align*}
    
\noindent The radius of this hyper-sphere is
    \begin{align*}
    R:= \frac{\sqrt{\left(\nrm(z) + 1\right)^2 - 4t^2}}{\nrm(z) - 1} &= \frac{1}{1 + \frac{\nrm(\alpha) - 1}{t^2}}\sqrt{1 + \frac{2(\nrm(\alpha) - 1)}{t^2} + \frac{(1 + \nrm(\alpha))^2}{t^4}},
    \end{align*}
    
\noindent and since $0 \leq \nrm(\alpha) \leq 1$, it is easy to check that
    \begin{align*}
    1 \leq R \leq \sqrt{1 + \frac{4}{t^4}}.
    \end{align*}
    
\noindent It follows that if $t > \sqrt{2}$, then $\alpha + t\xi$ will lie above all translations of this geodesic hyper-sphere. In that case, we will have
    \begin{align*}
    &\left\{w \in \HH^4\middle|d(z,w - \tau) \leq d(-z^{-1},w - \tau)\right\} \\
    &= \left\{w \in \HH^4\middle| \nrm\left(w - \tau + \frac{2\alpha}{\nrm(z) - 1}\right) \geq \frac{\left(\nrm(z) + 1\right)^2 - 4t^2}{\left(\nrm(z) - 1\right)^2}\right\}
    \end{align*}
    
\noindent for all $\tau \in \OO^+$. Thus, we see that
        \begin{align*}
        \mathcal{D}_z^* = &\bigcap_{\gamma \in \Gamma_\infty} \left\{w \in \HH^4\middle|d(z,w) \leq d(\gamma.z,w)\right\} \\
    \cap&\bigcap_{\tau \in \OO^+} \left\{w \in \HH^4\middle|d(z,w - \tau) \leq d(-z^{-1},w - \tau)\right\}.
        \end{align*}
        
\noindent Next, for every point $p \in \mathcal{F}_z$, the smallest $t$ such that $p + t \xi \in \mathcal{D}_z^*$ will be given by
    \begin{align*}
    \sup\left\{t \in \mathbb{R}^+\middle|\exists \tau \in \OO^+, \ d(z,p + t \xi - \tau) = d(-z^{-1},p + t \xi - \tau)\right\}.
    \end{align*}
    
\noindent To see that this is set is always non-empty, regardless of the choice of $p$, consider the projection of the set
    \begin{align*}
    \bigcup_{\tau \in \OO^+} \left\{w \in \HH^4\middle|d(z,w - \tau) = d(-z^{-1},w - \tau)\right\}
    \end{align*}
    
\noindent onto $H^+$---this will be a union of translations of a disk with radius at least $1$---since by Theorems \ref{Definite Euclidean DOrders} and \ref{Covering Measure} $\rho(\OO) < 1$, we know that these disks cover $H^+$. Ergo, for every $p$, there exists $s$ such that $p + s\xi$ satisfies $d(z,p + s\xi - \tau) = d(-z^{-1},p + s\xi - \tau)$ for some $\tau \in \OO^+$. Note that
    \begin{align*}
    \inf&\left\{s \in \RR^+\middle|\exists p \in \mathcal{F}_z, \ p + s \xi \in \mathcal{D}_z^*\right\} \\
    &= \inf_{p \in \mathcal{F}_z}\sup\left\{s \in \RR^+\middle|\exists \tau \in \OO^+, \ d(z,p + s \xi - \tau) = d(-z^{-1},p + s \xi - \tau)\right\},
    \end{align*}
    
\noindent which is maximized when $p$ is as far away from
    \begin{align*}
    \frac{2\alpha}{\nrm(z) - 1} + \OO^+
    \end{align*}
    
\noindent as possible. It is easy to see that in this case,
    \begin{align*}
    \inf_{\tau \in \OO^+} \nrm\left(p - \tau + \frac{2\alpha}{\nrm(z) - 1}\right) = \rho(\OO)
    \end{align*}
    
\noindent and
    \begin{align*}
    \nrm\left(p - \tau + \frac{2\alpha}{\nrm(z) - 1} + s\xi\right) &= \rho(\OO) + s^2,
    \end{align*}
    
\noindent whence $s = \sqrt{R^2 - \rho(\OO)}$. Thus
    \begin{align*}
    \inf\left\{s \in \RR^+\middle|\exists p \in \mathcal{F}_z, \ p + s \xi \in \mathcal{D}_z^*\right\} = \sqrt{R^2 - \rho(\OO)} \geq \sqrt{1 - \rho(\OO)}.
    \end{align*}
    
\noindent Next, choose any $\gamma \in \Gamma \backslash \{id\}$. Let $\overline{l}_\gamma$ be the geodesic line segment from $z$ to $\partial \mathcal{D}_z^*$ in the direction of $\gamma.z$. Let $l_\gamma$ be the length of this line segment. By the definition of $\mathcal{D}_z$, $\gamma.\mathcal{D}_z$ intersects $\mathcal{D}_z^*$ only if $d(z,\gamma z) \leq l_\gamma/2$. By Corollary \ref{Orbit goes down}, we know that all geodesic line segments $\overline{l}_\gamma$ are contained inside of $\mathcal{D}_z^* \cap \left(\mathcal{F}_z^* \times [0,t]\right)$. But this set is contained inside $\mathcal{F}_z^* \times [\sqrt{1 - \rho(\OO)},t]$ which is compact---therefore,
    \begin{align*}
    L = \sup\left\{d(z,w)\middle|w \in \mathcal{D}_z^* \cap \left(\mathcal{F}_z^* \times [0,t]\right)\right\}
    \end{align*}
    
\noindent exists and the only $\gamma \in \Gamma$ such that $\gamma.\mathcal{D}_z$ intersects $\mathcal{D}_z^*$ satisfy $d(\gamma.z,z) \leq L/2$. This collection is $\Gamma'$---since $\Gamma$ is a discrete group, $\Gamma'$ is finite, hence $\mathcal{D}_z^*$ is a coarse fundamental domain. That
    \begin{align*}
    \mathcal{D}_z = \mathcal{D}_z^* \cap \left\{w \in \HH^4 \middle| d(w,z) \leq d(w,\gamma.z), \ \forall \gamma \in \Gamma'\right\}
    \end{align*}
    
\noindent is also an immediate consequence.
\end{proof}

As a corollary, we get Theorem \ref{Main Theorem}.

\begin{proof}[Proof of Theorem \ref{Main Theorem}]
Given a Euclidean $\ddagger$-order $\OO$, compute the collection of $\gamma \in \Gamma$ such that $d(\gamma.z,z) < L/2$, as defined in Theorem \ref{Coarse Fundamental Domain}. This is possible since the bound $d(\gamma.z,z) < L/2$ implies a bound on $\nrm(cz + d)$, and we know by Lemma \ref{Trivial Bound} that there are only finitely many $c,d$ such that $\nrm(cz + d) < k$ for any $k$, and these $c,d$ can be effectively enumerated. We can then use Algorithm \ref{Euclidean Algorithm} to compute corresponding $a,b$ for each pair of $c,d$. By Theorem \ref{Coarse Fundamental Domain}, this collection $\Gamma'$ defines a Dirichlet domain---specifically,
    \begin{align*}
    \mathcal{D}_z \subset &\bigcap_{\gamma \in \Gamma_\infty} \left\{w \in \HH^4\middle|d(z,w) \leq d(\gamma.z,w)\right\} \\
    \cap&\bigcap_{\tau \in \OO^+} \left\{w \in \HH^4\middle|d(z,w - \tau) \leq d(-z^{-1},w - \tau)\right\},
    \end{align*}
    
\noindent which is a computable hyperbolic polytope with finitely many sides. That this polytope has only one cusp is evident from the fact that $\mathcal{D}_z^*$ only has one cusp.
\end{proof}

\section{Open Problems and Counter-Examples:}

We conclude with some discussion of existing open problems, as well as some counter-examples to obvious conjectures. We will begin with the big picture.

\begin{problem}\label{First Problem}
Do there exist interesting examples of $\sigma$-Euclidean rings other than orders of quaternion algebras with orthogonal involution? For example, for what other (non-trivial) classes of $R$ are all right invertible ideals of the form $I = xR + yR$ with $x\sigma(y) \in R^+$ for some involution $\sigma$?
\end{problem}

\begin{remark}
It is possible that the requirement that every right invertible ideal be of the form $I = xR + yR$ with $x\sigma(y) \in R^+$ is a stronger requirement than necessary: for $R$ being $\sigma$-Euclidean to imply that it is a principal ring, it suffices for every right invertible ideal to admit a minimal set of generators such that least two of them $x,y$ satisfy $x\sigma(y) \in R^+$.
\end{remark}

\begin{remark}
Theorem \ref{Right Ideals Are Nice} likely applies in a much larger context than just orders of quaternion algebras, for the following reason: ideals of orders of quaternion algebras are generated by two elements in general and if $R^+$ is large, it should be possible to choose those generators $a,b$ carefully so that $a\sigma(b) \in R^+$.
\end{remark}

\begin{problem}\label{Second Problem}
If $R$ is a Euclidean ring, does there exist some involution $\sigma$ such that $R$ is $\sigma$-Euclidean? Is this true if we restrict to the case where $R$ is an order of a quaternion algebra with (orthogonal) involution?
\end{problem}

\begin{remark}\label{Euclidean List}
All Euclidean rings that are orders of definite, rational quaternion algebras are isomorphic to one of the following (see \cite{Vigneras1980}).
	\begin{align*}
	\ZZ \oplus \ZZ i \oplus \ZZ j \oplus \ZZ \frac{1 + i + j + ij}{2} &\subset \left(\frac{-1,-1}{\QQ}\right) \\
  \ZZ \oplus \ZZ i \oplus \ZZ \frac{1 + i + j}{2} \oplus \ZZ \frac{1 + i + ij}{2} &\subset \left(\frac{-1,-2}{\QQ}\right) \\
	\ZZ \oplus \ZZ i \oplus \ZZ \frac{1 + i + j}{2} \oplus \ZZ \frac{i + ij}{2} &\subset \left(\frac{-2,-5}{\QQ}\right).
	\end{align*}
	
\noindent From Theorem \ref{Enumeration Theorem}, we know that each of these can be given an orthogonal involution $\ddagger$ such that they are $\ddagger$-Euclidean. Does this continue to hold true for all orders of totally definite quaternion algebras? All orders of quaternion algebras over global fields? All rings with involution?
\end{remark}

\begin{remark}
It is not true that if a Euclidean ring is $\sigma$-Euclidean with one choice of $\sigma$, then it is $\sigma$-Euclidean with any choice of $\sigma$. As an example,
	\begin{align*}
	\OO_1 = \ZZ \oplus \ZZ i \oplus \ZZ \frac{i + j}{2} \oplus \ZZ \frac{1 + ij}{2} &\subset \left(\frac{-1,-3}{\QQ}\right)
	\end{align*}
	
\noindent is isomorphic (as a ring) to
	\begin{align*}
	\OO_2 = \ZZ \oplus \ZZ \frac{1 + i}{2} \oplus \ZZ j \oplus \ZZ \frac{3j + ij}{6} &\subset \left(\frac{-3,-3}{\QQ}\right)
	\end{align*}
	
\noindent since they are both maximal orders of a definite quaternion algebra over $\QQ$ with discriminant $3$, and the class number is $1$. Furthermore, they are both Euclidean as per Remark \ref{Euclidean List}. But if we define $z^\ddagger = (ij)\overline{z}(ij)^{-1}$, then $\disc(\ddagger) = -3$ for the first one, and $-1$ for the second; therefore, by Theorem \ref{Definite Euclidean DOrders}, $\OO_1$ is $\ddagger$-Euclidean, but $\OO_2$ is not.
\end{remark}

\begin{problem}
Let $H$ be a quaternion algebra over an algebraic number field $K$ with orthogonal involution $\ddagger$. Suppose that some $\ddagger$-order of $H$ is $\ddagger$-Euclidean. Is every maximal $\ddagger$-order of $H$ $\ddagger$-Euclidean?
\end{problem}

\begin{remark}
The corresponding statement for Euclidean orders is true: if one order is Euclidean, then it is maximal and it has class number $1$---it follows that all orders are isomorphic \cite{VoightBook} and therefore all maximal orders are Euclidean. For the ring with involution case, by Theorem \ref{Euclidean Means Almost Maximal}, if $H$ contains a $\ddagger$-Euclidean order, we know that $H$ certainly contains maximal $\ddagger$-orders that are $\ddagger$-Euclidean. Furthermore, any maximal $\ddagger$-order isomorphic (as a ring with involution) to a $\ddagger$-Euclidean order is obviously $\ddagger$-Euclidean. However, even if the right class number is $1$, it does not follow that all maximal $\ddagger$-orders of $H$ are isomorphic as rings with involution. As a counter-example, consider
	\begin{align*}
	\OO_1 &= \ZZ \oplus \ZZ i \oplus \ZZ \frac{i + j}{2} \oplus \ZZ \frac{1 + ij}{2} \subset \left(\frac{-1,-3}{\QQ}\right) \\
	\OO_2 &= \ZZ \oplus \ZZ i \oplus \ZZ \frac{1 + j}{2} \oplus \ZZ \frac{i + k}{2} \subset \left(\frac{-1,-3}{\QQ}\right).
	\end{align*}
	
\noindent Both are maximal $\ddagger$-orders if $z^\ddagger = (ij)\overline{z}(ij)^{-1}$. However, they are not isomorphic as rings with involution, since $\tr(\OO_1^+) = (2)$ and $\tr(\OO_2^+) = (1)$.
\end{remark}

\begin{problem}
Enumerate all isomorphism classes of $\ddagger$-Euclidean rings that arise as $\ddagger$-orders of totally definite quaternion algebras with orthogonal involutions.
\end{problem}

\begin{remark}\label{Finite Remark}
If $H$ is a totally definite quaternion algebra, then the underlying algebraic number field $K$ must be totally real. It is known that the number of totally real number fields $K$ with discriminant less than $\leq C$ is finite \cite{Odlyzko1990}, whence the number of Eichler orders of totally definite quaternion algebras with class number $1$ is finite---these were in fact enumerated by Kirschmer and Voight \cite{KirschmerVoight2010}. On any such Eichler order, there are only finitely many different choices of orthogonal involution $\ddagger$. By Theorem \ref{Discriminants of Maximal Orders}, and Corollary \ref{Class Number 1 Corollary}, every maximal $\ddagger$-order that is $\ddagger$-Euclidean is an Eichler order with class number $1$. By Theorem \ref{Euclidean Means Almost Maximal}, there are only finitely many $\ddagger$-Euclidean orders contained in any given maximal $\ddagger$-order. We conclude that the number of isomorphism classes is finite; it remains to actually enumerate all of them.
\end{remark}

\begin{problem}
Do there exist infinitely many isomorphism classes of Euclidean $\ddagger$-orders of quaternion algebras over algebraic number fields with orthogonal involution?
\end{problem}

\begin{remark}
In light of Remark \ref{Finite Remark}, we know that if this is true, then all but finitely many of these isomorphism classes correspond to indefinite quaternion algebras. On the other hand, if $H$ is totally indefinite and the base field has Euclidean ring of integers, then maximal orders of $H$ are Euclidean \cite{CerriChabertPierre2014}---consequently, there are infinitely many isomorphism classes of Euclidean rings that arise as maximal orders of quaternion algebras over algebraic number fields. It is possible that a similar result is true for $\ddagger$-orders.
\end{remark}

\begin{problem}
Does there exist a non-commutative $\sigma$-Euclidean ring such that no choice of stathm is multiplicative?
\end{problem}

\begin{remark}
It is known due to the work of Conidis, Nielsen, and Tombs that there exist Euclidean domains where no choice of stathm is multiplicative \cite{ConidisNielsenTombs2019}---therefore, if we allow the ring to be commutative, then it is easy to answer this question in the negative simply by taking $\sigma$ to be the identity function.
\end{remark}

\begin{problem}
Let $\OO$ be a $\ddagger$-order of a definite, rational quaternion algebra with orthogonal involution. If $\OO$ is not $\ddagger$-Euclidean, does it follow that $E^\ddagger(2,\OO) \neq SL^\ddagger(2,\OO)$? Is $E^\ddagger(2,\OO)$ an infinite index subgroup? Is it a non-normal subgroup?
\end{problem}

\begin{remark}
In all cases where the answer is known, if $\OO$ is not $\ddagger$-Euclidean, then $E^\ddagger(2,\OO)$ is an infinite-index subgroup of $SL^\ddagger(2,\OO)$---specifically, this is true if $\OO$ is a maximal $\ddagger$-ring and $\OO^+$ contains a Euclidean subring \cite{Sheydvasser2019}. If this is true generally, then it is analogous to a phenomenon first noted by Cohn \cite{Cohn1966}: if $K$ is an imaginary quadratic field and $\mathfrak{o}_K$ is its ring of integers, then either $\mathfrak{o}_K$ is Euclidean or $SL(2,\mathfrak{o}) \neq E(2,\mathfrak{o})$. In fact, it is true that $\mathfrak{o}$ is any order of $K$, then either $\rho(\mathfrak{o}) \leq 1$ or $E(2,\mathfrak{o})$ is an infinite-index, non-normal subgroup of $SL(2,\mathfrak{o})$---this was proved by elementary means by Nica \cite{Nica2011}, with some minor corrections given by the author \cite{Sheydvasser2016}.
\end{remark}

\begin{problem}
Given a maximal $\ddagger$-order $\OO$ of a rational, definite quaternion algebra with orthogonal involution, can one compute the volume of the orbifold $\HH^4/SL^\ddagger(2,\OO)$ in terms of algebraic invariants of $\OO$?
\end{problem}

\begin{problem}
Produce an algorithm such that, given a maximal $\ddagger$-order $\OO$ of a rational, definite quaternion algebra with orthogonal involution, computes a Dirichlet domain for the group $SL^\ddagger(2,\OO)$.
\end{problem}

\begin{problem}
If $(R,\sigma)$ is a ring with involution and a division algebra, then $SL^\sigma(2,R)$ acts transitively on $R^+$. What interesting topological spaces arise this way?
\end{problem}

\subsection*{Acknowledgment.}
The author would like to thank Alex Kontorovich, Jeffrey Lagarias, and John Voight for helpful discussion and feedback.

\bibliography{EuclideanBib}
\bibliographystyle{alpha}
\end{document}